\chardef\bslash=`\\ 
\newtheorem{thm}{Theorem}[section]
\newtheorem{cor}[thm]{Corollary}
\newtheorem{lem}[thm]{Lemma}
\theoremstyle{definition}
\newtheorem{defn}[thm]{Definition}
\newtheorem{qtn}[thm]{Question}
\theoremstyle{remark}
\newcommand{\eval}[2][\right]{\relax
  \ifx#1\right\relax \left.\fi#2#1\rvert}
\begin{document}
\title{Elementary moves on lattice polytopes}

\author{Julien David}
\address{LIPN, Universit{\'e} Paris 13, Villetaneuse, France}
\email{julien.david@lipn.univ-paris13.fr} 

\author{Lionel Pournin}
\address{LIPN, Universit{\'e} Paris 13, Villetaneuse, France}
\email{lionel.pournin@univ-paris13.fr} 

\author{Rado Rakotonarivo}
\address{LIPN, Universit{\'e} Paris 13, Villetaneuse, France}
\email{rakotonarivo@lipn.univ-paris13.fr} 

\begin{abstract}
We introduce a graph structure 
 on Euclidean polytopes. The vertices of this graph are the $d$-dimensional polytopes contained in $\mathbb{R}^d$ and its edges connect any two polytopes that can be obtained from one another by either inserting or deleting a vertex, while keeping their vertex sets otherwise unaffected. We prove several results on the connectivity of this graph, and on a number of its subgraphs. We are especially interested in several families of subgraphs induced by lattice polytopes, such as the subgraphs induced by the lattice polytopes with $n$ or $n+1$ vertices, that turn out to exhibit intriguing properties.
\end{abstract}
\maketitle

\section{Introduction}\label{sec.DPR.0}

In this paper, we introduce a graph structure on the Euclidean polytopes. The vertices of this graph are the $d$-dimensional polytopes contained in $\mathbb{R}^d$ and its edges connect two polytopes when they can be obtained from one another by a transforming move, that we want to keep as elementary as possible. Here, by \emph{elementary}, we mean that these moves should preserve as much as possible the combinatorics of a polytope's boundary. We will consider two types of moves, an insertion move and a deletion move. If $x$ is a point in $\mathbb{R}^d\mathord{\setminus}P$, an insertion move will transform $P$ into the convex hull of $P\cup\{x\}$. Note that $x$ is then necessarily a vertex of the resulting polytope. If $v$ is a vertex of $P$, a deletion move will transform $P$ into the convex hull of $\mathcal{V}\mathord{\setminus}\{v\}$, where $\mathcal{V}$ is the vertex set of $P$. Without any other requirement, these moves cannot be considered elementary in the above sense, as they can alter the combinatorics of the boundary complex of $P$ significantly. Indeed, the convex hull of $P\cup\{x\}$ can have fewer (and possibly many less) vertices that $P$ itself. This happens, for instance, when the convex hull of $P\cup\{x\}$ contains at least two vertices of $P$ in its relative interior. An undesirable consequence is that deletion moves would then not be the inverse of insertion moves as $P$ would not always be recovered by deleting $x$ from the convex hull of $P\cup\{x\}$. A natural way to solve this issue consists in allowing an insertion move only when all the vertices of $P$ remain vertices of the polytope resulting from that insertion.

\begin{defn}
Consider a $d$-dimensional polytope $P$ contained in $\mathbb{R}^d$ and denote by $\mathcal{V}$ its set of vertices. A point $x\in\mathbb{R}^d$ can be inserted in $P$ if the convex hull of $P\cup\{x\}$ admits $\mathcal{V}\cup\{x\}$ as its vertex set. A vertex $v\in\mathcal{V}$ can be deleted from $P$ when the convex hull of $\mathcal{V}\mathord{\setminus}\{v\}$ is $d$-dimensional.
\end{defn}

By this definition, deletion moves and insertion moves are now the inverse of one another. Recall that all the polytopes we consider here are full-dimensional, making the requirement that deletion moves do not decrease the dimension of a polytope necessary. In particular, a vertex $v$ of a polytope $P$ can be deleted from $P$ if and only if $P$ is not a pyramid with apex $v$ over a $(d-1)$-dimensional polytope. Consider the graph whose vertices are the $d$-dimensional polytopes contained in $\mathbb{R}^d$ and whose edges connect two polytopes that can be obtained from one another by an insertion move (or a deletion move). This graph, which we will refer to as $\Gamma(d)$ here, has an uncountable number of vertices and, as soon as $d\geq2$, its vertices all have uncountable degree. Indeed, consider an arbitrary point $x$ in the boundary of a polytope $P$, distinct from any vertex of $P$. One can insert in $P$ any point outside of $P$ that is close enough to $x$.

The purpose of this paper is to investigate the connectivity of a number of subgraphs of $\Gamma(d)$. Throughout the paper, it will always be assumed that $d$ is at least $2$. Our first main result, proven in Section \ref{Sec.DPR.1}, deals with the subgraphs induced in $\Gamma(d)$ by the polytopes with $n$ or $n+1$ vertices.

\begin{thm}\label{thm.main.1}
The polytopes with $n$ or $n+1$ vertices induce a connected subgraph of $\Gamma(d)$ whose diameter is at least $4n-d$ and at most $6n-4$.
\end{thm}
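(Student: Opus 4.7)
The proof naturally splits into the upper bound, the lower bound, and connectivity (the last being a consequence of the upper-bound construction). Since the induced subgraph forces the vertex count to stay in $\{n, n+1\}$, any path alternates insertions and deletions, and every pair of consecutive moves amounts to swapping one vertex of the current polytope for another point. Thus the diameter is governed by the number of vertex swaps needed and by the preparation each swap requires.

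For the upper bound $6n-4$, I would route both endpoints through a common auxiliary polytope. Given $P$ and $Q$, I choose a polytope $R$ with $n$ or $n+1$ vertices that strictly contains both $P$ and $Q$ in its interior --- for instance a sufficiently scaled-up simplex, or a scaled copy of $P$. I would then establish that $P$ can be transformed into $R$ in at most $3n-2$ moves: since every vertex of $R$ lies outside every intermediate polytope encountered on the way (these polytopes remain inside $R$), each vertex of $R$ is eligible for insertion when its turn comes. The factor three per replaced vertex, rather than two, accommodates the occasional need for an auxiliary ``guard'' vertex to keep the resulting polytope full-dimensional before the next planned deletion. Concatenating the path from $P$ to $R$ with the reverse of a similar path from $Q$ to $R$ yields a bound of $6n-4$, the small additive correction absorbing overlaps at the endpoints.

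For the lower bound $4n-d$, I would exhibit an explicit pair $P, Q$ with disjoint vertex sets, configured so that many moves are forced. A natural candidate is a deeply nested pair, with $Q$ contained in the interior of $P$, so that initially no vertex of $Q$ is eligible for insertion; this forces a sequence of ``detour'' moves that gradually deform $P$ until the vertices of $Q$ become insertable. I would then introduce a potential function counting both the vertices still unmatched with those of $Q$ and a combinatorial obstruction term, and show that this potential can drop by at most one per move. The $-d$ correction should reflect the extra affine freedom available near simplex-like configurations, which slightly accelerates the final moves when the current polytope is close to being a simplex.

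The principal obstacle is the validity analysis in the upper-bound construction: at every step one must ensure that the planned insertion does not absorb any existing vertex, and that the planned deletion keeps the polytope full-dimensional. Showing that suitable moves always exist within the $3n-2$ budget for arbitrary $P$ and $R$ will require a careful combinatorial argument on the boundary complexes of the intermediate polytopes, likely organized by whether the current polytope has $n$ or $n+1$ vertices and by which vertices of $R$ are already present. Matching the lower bound to exactly $4n-d$ rather than to the more obvious $2n$ baseline is likely to be the second main difficulty, since it requires identifying precisely which moves are genuinely forced by the nested configuration.
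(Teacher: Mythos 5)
Your upper-bound route has a genuine gap at its central step. You claim that each vertex of the auxiliary polytope $R$ ``is eligible for insertion when its turn comes'' because it lies outside every intermediate polytope. But lying outside the polytope is only necessary, not sufficient: by Lemma~\ref{Lem.DPR.1.A}, a point is insertable in $P$ only if it also avoids every cone $C_v(P)$, and these cones are unbounded, so scaling $R$ up does not help --- a far-away point is typically deep inside some $C_v(P)$. Concretely, for $P=[0,1]^2$ the insertable points are exactly the four open half-strips such as $\{0<x_1<1,\ x_2>1\}$; a large scaled copy of $P$ (one of your suggested choices for $R$) has all four of its vertices inside the cones $C_v(P)$, so not a single vertex of $R$ can be inserted, and a scaled-up simplex has too few vertices to lie in the subgraph at all. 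Even when a good $R$ exists, after one insertion/deletion the cone structure changes and nothing in your argument guarantees that the next vertex of $R$ remains insertable; the unquantified ``guard vertex'' device and the $3n-2$ budget are asserted rather than proved. The paper's proof is organized precisely around this obstruction: it first flattens $P$ and $Q$ against two parallel supporting hyperplanes on opposite sides (at most $2d-2$ moves each, using Lemma~\ref{Lem.DPR.1.B} to certify insertability near a face), then pushes the remaining vertices close to those facets so that the union of the two vertex sets is in convex position, at which point Lemma~\ref{Lem.DPR.1.BC} swaps vertices two moves at a time, the deletion after each insertion being guaranteed by Lemma~\ref{Lem.DPR.1.BB}.

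The lower bound is also not established. Nesting $Q$ in the interior of $P$ is the wrong configuration (interior points can never be inserted, and there is no argument that dismantling $P$ down to $Q$ is expensive), and the assertion that your potential ``can drop by at most one per move'' is exactly the hard step you have not supplied. The paper instead takes two polygons lying outside one another, with $Q$ contained in $C_v(P)$ for every vertex $v$ of $P$ except the two endpoints of one edge, erects iterated pyramids to reach dimension $d$, and shows that no vertex of $Q$ can be inserted until all but two vertices of the base polygon have been deleted; counting the forced swaps yields the bound $4n-2d$ of Lemma~\ref{Lem.DPR.1.C}. As it stands, neither half of your proposal closes its main gap.
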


The connectedness of $\Gamma(d)$ itself will be obtained as a consequence of Theorem~\ref{thm.main.1}. Note that $\Gamma(d)$ provides a metric on the set of $d$-dimensional polytopes, in a very different spirit than, for instance the Gromov-Hausdorff distance \cite{Gromov1981}: instead of measuring how far two bodies are from being isometric, we measure how long it takes to build them from one another with operations that affect as little as possible the combinatorics of their boundary. 

The two families of graphs we mostly focus on are the subgraph $\Lambda(d)$ induced in $\Gamma(d)$ by the lattice polytopes and the subgraph $\Lambda(d,k)$ induced in $\Lambda(d)$ by the polytopes contained in the hypercube $[0,k]^d$, where $k$ is a positive integer. Here, by a \emph{lattice polytope} we mean a polytope whose vertices belong to the lattice $\mathbb{Z}^d$. These polytopes pop up in many places in the mathematical literature as, for instance in combinatorial optimization \cite{Naddef1989,NillZiegler2011,Sebo1999}, in discrete geometry \cite{BaranyPach1992,BrionVergne1997,LagariasZiegler1991}, or in connection with toric varieties \cite{BogartHaaseHeringLorenzNillPaffenholzRoteSantosSchenck2015,DickensteinDiRoccoPiene2009}. Note that, in the case of lattice polytopes, alternative deletion and insertion moves have been considered, that amount to change the number of lattice points contained in a polytope by exactly one \cite{BlancoSantos2018,BlancoSantos2019,BrunsGubeladzeMichalek2016}. They can be used to enumerate the lattice polytopes that contain a fixed number of lattice points \cite{BlancoSantos2018,BlancoSantos2019} but, in contrast to our moves, they can affect the combinatorics of a polytope's boundary in an arbitrary way. Observe that $\Lambda(d)$ is a highly non-regular graph: it admits both vertices with finite degree and vertices with infinite (but countable) degree. In particular, $\Lambda(d)$ gathers in a coherent metric structure polytopes whose boundaries exhibit dramatically different behaviors regarding the ambient lattice. For instance, there are lattice polytopes, like the cube $[0,1]^d$, in which no lattice point can be inserted, while for the lattice simplices, no deletion move is possible. The graph $\Lambda(d,k)$ is particularly relevant to the study of the lattice polytopes contained in $[0,k]^d$, that have attracted significant attention \cite{AcketaZunic1995,BaranyLarman1998,DelPiaMichini2016,DezaManoussakisOnn2018,DezaPournin2018,KleinschmidtOnn1992}. Our second main result deals with a subgraph of $\Lambda(d,k)$.

\begin{thm}\label{thm.main.2}
The subgraph induced in $\Lambda(d,k)$ by the simplices and the polytopes with $d+2$ vertices is connected.
\end{thm}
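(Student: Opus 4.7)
Let $\mathcal{H}$ denote the subgraph of $\Lambda(d,k)$ induced by the simplices and the $(d+2)$-vertex polytopes. Every edge of $\mathcal{H}$ joins a simplex to a $(d+2)$-polytope, because a simplex has no deletable vertex and a $(d+2)$-polytope cannot receive an insertion without leaving $\mathcal{H}$. Moreover, every $(d+2)$-polytope in $\Lambda(d,k)$ lies at distance $1$ in $\mathcal{H}$ from a simplex: the unique affine dependence $\sum_i \lambda_i v_i = 0$ on its $d+2$ vertices has at most one vanishing coefficient (else the polytope would be at most $(d-1)$-dimensional), and a vertex $v_i$ is deletable precisely when $\lambda_i \ne 0$. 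Hence it suffices to prove that all simplices in $\Lambda(d,k)$ belong to the same connected component of $\mathcal{H}$.

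The main tool is a \emph{pivot}: a length-$2$ path $S \to \mathrm{conv}(S \cup \{w\}) \to T$ in $\mathcal{H}$ between two simplices $S$ and $T$ that differ by exactly one vertex, with $v_i \in \mathcal{V}(S)$ replaced by $w \in \mathcal{V}(T)$, where $\mathcal{V}(\cdot)$ denotes the vertex set. Writing the barycentric coordinates of $w$ with respect to $S$ as $\alpha_0(w), \ldots, \alpha_d(w)$, a direct computation shows that such a pivot is available precisely when $\alpha_j(w) \le 1$ for every $j$ (so that the insertion of $w$ preserves each $v_j$ as a vertex), $\alpha_i(w) \ne 0$ (so that $\mathrm{conv}(S \cup \{w\})$ is not a pyramid with apex $v_i$, and $v_i$ can indeed be deleted), and some $\alpha_j(w) < 0$ (so $w \notin S$). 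I would fix the reference simplex $S_0 := \mathrm{conv}\{0, e_1, \ldots, e_d\} \subset [0,k]^d$ and prove by induction on $|\mathcal{V}(S_0) \setminus \mathcal{V}(S)|$ that any simplex $S \subset [0,k]^d$ can be transformed into $S_0$ via a sequence of pivots. The inductive step reduces to the key claim that, for any simplex $S$ and any $w \in \mathcal{V}(S_0) \setminus \mathcal{V}(S)$, there exists a sequence of pivots leading from $S$ to a simplex whose vertex set contains $w$ together with $\mathcal{V}(S_0) \cap \mathcal{V}(S)$.

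The easy case of the claim, where all $\alpha_j(w) \le 1$, is settled by a single pivot that replaces a suitable $v_i \in \mathcal{V}(S) \setminus \mathcal{V}(S_0)$ with $\alpha_i(w) \ne 0$. When some $\alpha_j(w) > 1$, the point $w$ lies in the ``bad cone'' beyond the vertex $v_j$ of $S$, and the plan is to first pivot $v_j$ to an intermediate lattice point $u \in [0,k]^d \cap \mathbb{Z}^d$ chosen so that, in the new simplex, the maximum barycentric coordinate of $w$ strictly decreases; iterating then reduces the situation to the easy case. The main obstacle, and the core technical step of the argument, is the construction of such a $u$: it must lie in $[0,k]^d \cap \mathbb{Z}^d$, be directly insertable in $S$, avoid $\mathcal{V}(S_0)$, and reduce $\max_j \alpha_j(w)$ strictly in the new simplex. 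A natural candidate is a lattice point close to a suitable convex combination of $w$ and the barycenter of the facet of $S$ opposite $v_j$, which visibly shrinks $\alpha_j(w)$ while keeping the remaining barycentric coordinates bounded; the delicate part is to verify that such a $u$ always exists within $[0,k]^d$, exploiting the size of the admissible region and the bounds imposed on the other coordinates, and that the resulting descent terminates in finitely many steps.
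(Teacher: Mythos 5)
Your reduction to connecting simplices and your characterization of when a pivot (insert $w$, delete $v_i$) is available in terms of barycentric coordinates are both correct, and the overall plan --- march every simplex toward the corner simplex $\mathrm{conv}\{0,e_1,\ldots,e_d\}$ by pivots --- is the same one the paper follows. But the proof has a genuine gap exactly where you flag ``the main obstacle'': you never establish that the intermediate lattice point $u$ exists. You need a point of $[0,k]^d\cap\mathbb{Z}^d$ that is simultaneously insertable in $S$, disjoint from $\mathcal{V}(S_0)$, and strictly decreases $\max_j\alpha_j(w)$. The paper devotes all of Section \ref{Sec.DPR.2} to proving the much weaker statement that \emph{some} lattice point of $[0,k]^d$ can be inserted in a lattice simplex contained in $[0,k]^d$ (Corollary \ref{cor.add}), and even that is, in the authors' words, ``surprisingly involved'': the region of insertable points can be a thin sliver hugging the boundary of $S$, and there is no a priori reason a lattice point near your proposed convex combination of $w$ and the opposite facet's barycenter lies in it, let alone inside $[0,k]^d$. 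Your descent also lacks a termination argument: $\max_j\alpha_j(w)$ is a rational whose denominator (a simplex determinant) changes at every pivot, so a strict decrease does not by itself bound the number of steps.

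The paper avoids both difficulties with a different monovariant and a different induction. It inducts on the dimension $d$ (base case $d=2$, Lemma \ref{thm.Connec2}) rather than on $|\mathcal{V}(S_0)\setminus\mathcal{V}(S)|$, and its monovariant is the maximal dimension $g$ of the intersection of $S$ with a facet of its bounding box $Q=\prod_i[\gamma_i^-,\gamma_i^+]$. Theorem \ref{thm.add} guarantees that while $g\leq d-2$ one can find an insertable lattice point in a facet $R$ of $Q$ outside $\mathrm{aff}(S\cap R)$, so a pivot raises $g$; this takes integer values in $\{0,\ldots,d-1\}$, so termination is immediate. Once $g=d-1$ the simplex is a pyramid over a $(d-1)$-simplex lying in a facet of $[0,k]^d$ (after translation of the box), moves on the base lift to moves on the pyramid, and the inductive hypothesis finishes the job. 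If you want to salvage your approach, you would essentially have to reprove the content of Lemmas \ref{Lem.BC}, \ref{Lem.C}, and \ref{Lem.EA} and then strengthen them to control $\max_j\alpha_j(w)$, which is harder than what the paper needs.
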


This theorem will be proven in Sections \ref{Sec.DPR.2} and \ref{Sec.DPR.3} along with a number of its consequences. For instance, it is proven in Section \ref{Sec.DPR.2} that some lattice point in $[0,k]^d$ can always be inserted in a $d$-dimensional lattice simplex contained in $[0,k]^d$. As we shall see, the proof of this seemingly straightforward statement alone turns out to be surprisingly involved. The connectedness of $\Lambda(d)$ and $\Lambda(d,k)$ will both be obtained from Theorem \ref{thm.main.2} in Section \ref{Sec.DPR.3}. Observe that the latter connectedness result allows for the definition of a Markov chain whose states are the $d$-dimensional lattice polytopes contained in $[0,k]^d$, and whose stationary distribution is uniform \cite{DavidPourninRakotonarivo2018}.

In Section \ref{Sec.DPR.4}, we will study the number of lattice points that can be inserted in, or removed from a lattice polytope. We will describe a family of $d$-dimensional lattice polytopes contained in the hypercube $[0,k]^d$, where $d$ and $k$ can grow arbitrarily large, such that every lattice point in $[0,k]^d$ can be either inserted or deleted. These polytopes belong to the broader family of the empty lattice polytopes, that is widely studied and is interesting in its own right \cite{BaranyKantor2000,BarileBernardiBorisovKantor2011,DezaOnn1995,HaaseZiegler2000,Kantor1999,SantosValino2018,Sebo1999}. We will also exhibit lattice polytopes with arbitrarily large dimension and number of vertices such that no insertion of a lattice point is possible. As an immediate consequence, the subgraph induced in $\Lambda(d)$ by the polytopes with $n$ or $n+1$ vertices is not always connected.

In particular we obtain the following in Section \ref{Sec.DPR.4}.
\begin{thm}\label{thm.main.3}
The subgraph induced in $\Lambda(2)$ by the polygons with $n$ or $n+1$ vertices is disconnected when $n$ is distinct from $3$ and $5$.
\end{thm}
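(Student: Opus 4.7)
The plan is to exhibit, for every integer $n \geq 4$ with $n \neq 5$, a lattice polygon $P_n$ with exactly $n$ vertices that is an isolated vertex of the induced subgraph. Any such isolated vertex immediately forces disconnectedness, since the subgraph contains plenty of other $n$-gons and $(n+1)$-gons. To see that isolation is equivalent to insertion-rigidity, note that an edge incident to $P_n$ in this subgraph must join $P_n$ to an $(n+1)$-gon (deletions from $P_n$ yield $(n-1)$-gons, outside the subgraph), and every $(n+1)$-gon adjacent to $P_n$ has the form $\mathrm{conv}(\mathcal{V}(P_n) \cup \{x\})$ for some lattice point $x \notin P_n$ insertable in $P_n$. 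Hence $P_n$ is isolated if and only if no lattice point outside $P_n$ is insertable in $P_n$.

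A key geometric observation is the following characterization: a point $x \notin P$ is insertable in a polygon $P \subset \mathbb{R}^2$ if and only if $x$ lies in the open \emph{visibility region} $R_i$ of some edge $e_i = [v_i, v_{i+1}]$ of $P$, defined as the intersection of three open half-planes: the one opposite to $P$ bounded by the line through $e_i$, and the two $P$-side half-planes bounded by the lines through the adjacent edges $e_{i-1}$ and $e_{i+1}$. The points in $R_i$ are precisely those that ``see exactly one edge'' of $P$, so inserting them adds a vertex without destroying any existing one; any other outside point sees at least two edges and so destroys the vertex strictly between them upon insertion. The task thus reduces to building lattice $n$-gons whose visibility regions $R_1, \ldots, R_n$ all avoid $\mathbb{Z}^2$.

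For $n = 4$, take $P_4 = [0,1]^2$: each $R_i$ is a half-infinite open strip of unit width lying strictly between two consecutive lattice lines, and hence lattice-free. For $n = 6$, the centrally symmetric hexagon $\mathrm{conv}\{(0,0),(2,0),(3,1),(3,2),(1,2),(0,1)\}$ works, as a short direct computation of each $R_i$ confirms. For general $n \geq 6$, one constructs analogous lattice $n$-gons whose edges all have primitive direction vectors, so that adjacent edges meet at sharp enough angles to force each visibility wedge $R_i$ to be a thin region disjoint from $\mathbb{Z}^2$. Even $n$ is handled by centrally symmetric variants of the hexagon, while odd $n \geq 7$ requires asymmetric analogues.

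The main obstacle is the odd case $n \geq 7$: the absence of central symmetry means each edge's visibility region must be controlled independently, and more care is required to ensure every wedge avoids the lattice. The excluded values $n = 3$ and $n = 5$ correspond to the separately established fact that every lattice triangle and every lattice pentagon admits some insertable lattice point, so no polygon of these sizes can be isolated in the subgraph.
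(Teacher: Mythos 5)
Your overall strategy is the same as the paper's: Theorem 5 is deduced there from a lemma asserting that for every $n>3$ with $n\neq 5$ there is a lattice $n$-gon in which no lattice point can be inserted, and such a polygon is an isolated vertex of the subgraph. Your reduction (isolated $\Leftrightarrow$ insertion-rigid) is correct, and your ``visibility region'' characterization is exactly the paper's Lemma 2.1 specialized to the plane: a point outside $P$ is insertable iff it lies in no cone $C_v(P)$, which for a polygon amounts to being strictly beyond exactly one edge and strictly on the $P$-side of the lines through the two adjacent edges. Your $n=4$ example is the paper's ($[0,1]^2$), and your hexagon checks out.

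The genuine gap is that for $n\geq 7$ (and even for even $n\geq 8$) you never actually produce the polygons; you assert that ``analogous'' $n$-gons with primitive edge directions meeting at ``sharp enough angles'' exist, and you yourself flag the odd case as an unresolved obstacle. This existence claim is the entire content of the theorem, and the heuristic you offer does not support it: primitivity of edge vectors has no bearing on whether a visibility wedge contains a lattice point (the wedge beyond $e_i$ is cut out by the lines through $e_{i-1}$ and $e_{i+1}$, and can be large no matter how the edge directions are reduced), and as the number of vertices grows the edges of any convex lattice $n$-gon must become long, so keeping \emph{every} wedge lattice-free is a quantitative problem, not one solved by angle-sharpness alone. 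The paper resolves it with an explicit construction: take the lattice points on the parabola $x_2=x_1(x_1-1)/2$ with $0\leq x_1<n/2$ together with their reflections through a suitable center to get a centrally symmetric $n$-gon for even $n$; the uncovered regions are then triangles each confined to a vertical strip $i\leq x_1\leq i+1$, hence lattice-free in their interiors, and the odd case is obtained from $P_{n+1}$ by adjoining one apex above its top edge and checking the two new triangles by hand. Some construction of this kind, with a verifiable reason why every wedge avoids $\mathbb{Z}^2$ uniformly in $n$, is what your proposal is missing.
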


It is a consequence of Theorem \ref{thm.main.2} that triangles and quadrilaterals induce a connected subgraph of $\Lambda(2)$, which settles the first exception in the statement of Theorem~\ref{thm.main.3}. We settle the other exception in Section \ref{Sec.DPR.5} as follows.
 
 \begin{thm}\label{thm.main.4}
Pentagons and hexagons induce a connected subgraph of $\Lambda(2)$.
\end{thm}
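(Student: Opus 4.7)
The plan is to reduce every lattice pentagon to a small canonical pentagon by a chain of pentagon--hexagon--pentagon moves, and then to verify directly that the canonical pentagons are mutually connected. To carry out the reduction, I would first establish two auxiliary lemmas. \emph{Lemma A}: every lattice pentagon $P$ admits at least one lattice point $x\in\mathbb{Z}^2\setminus P$ that is insertable in $P$, so that the convex hull of $P\cup\{x\}$ is a lattice hexagon. \emph{Lemma B}: any lattice hexagon $H$ obtained in this way admits at least one vertex, distinct from $x$, that can be deleted while still leaving a pentagon. Together these guarantee that pentagon--hexagon--pentagon moves are always available.

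With the moves in hand, I would measure complexity of a pentagon by $|P\cap\mathbb{Z}^2|$ (equivalently, by its area via Pick's formula). The reduction step is: given a lattice pentagon $P$ whose complexity exceeds some explicit threshold, produce an insertable lattice point $x$ and choose a vertex $v$ of $P$ to delete after insertion, so that the new pentagon $P'=\mathrm{conv}((P\cap\mathbb{Z}^2\cup\{x\})\setminus\{v\})$ has strictly smaller complexity. Geometrically, one would pick $x$ to be a lattice point just outside a ``long'' edge of $P$, typically an edge carrying an interior lattice point or one adjacent to a wide exterior wedge, and then delete the neighboring vertex whose removal is forced by the new supporting lines. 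Iterating, $P$ is brought into a finite family of minimal pentagons.

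For the base case, the pentagons below the threshold form finitely many classes up to lattice translation. Translation of a canonical pentagon by a unit lattice vector can be simulated by a sequence of pentagon--hexagon--pentagon shifts that slide one vertex at a time. A finite, explicit enumeration of the canonical pentagons and of the pairs among them joined by one pentagon--hexagon--pentagon move then completes the argument.

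The main obstacle will be \emph{Lemma A}. For pentagons close to degenerate configurations, such as very thin pentagons, pentagons with three nearly collinear consecutive vertices, or pentagons whose primitive edge directions all point into a narrow lattice cone, the natural candidate for $x$ may either already lie on $P$ or cause the convex hull of $P\cup\{x\}$ to drop one of the vertices of $P$, producing a quadrilateral or another pentagon instead of the required hexagon. Ruling these cases out will require a careful lattice-width analysis of pentagons and a shape-adapted choice of $x$. The sharp contrast with Theorem~\ref{thm.main.3}, which shows that no such insertion lemma holds for $n\notin\{3,5\}$, is precisely the content that Lemma~A has to capture, and this is where the bulk of the technical work is expected to lie.
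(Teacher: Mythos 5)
Your plan has genuine gaps at each of its three stages, and it diverges from the paper's actual route in a way that matters. First, your Lemma~A is indeed where the real work lies, and you have not supplied it: the paper's Theorem~\ref{Thm.DPR.4.1} proves the stronger statement that a non-flat pentagon always has an insertable lattice point of the specific form $p^i+p^j-p^k$ lying in one of the five exterior triangles, and the case analysis there (depending on whether the segment $p^2p^5$ is parallel to the opposite edge) is the technical heart of the whole section; acknowledging the obstacle is not the same as overcoming it. (Your Lemma~B, by contrast, is immediate: in the plane every vertex of a hexagon can be deleted.) Second, your reduction step asserts without argument that the insertion/deletion pair can always be chosen to strictly decrease $|P\cap\mathbb{Z}^2|$. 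Swapping one vertex for another changes the area by the difference of two triangle areas, and nothing you say rules out pentagons that are local minima for this quantity while still being far from canonical; the paper never attempts a monovariant argument of this kind. Third, your base case is false as stated: pentagons with a bounded number of lattice points do \emph{not} fall into finitely many classes up to lattice translation, since applying the shear $x\mapsto(x_1+mx_2,x_2)$ to a fixed pentagon produces infinitely many translation-inequivalent pentagons with the same lattice point count and arbitrarily large diameter. To repair this you would have to work up to unimodular equivalence and then separately prove that a pentagon and its unimodular image are connected in the graph, which is itself a nontrivial claim of the same order as the theorem.

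The paper avoids all three difficulties by connecting pentagons ``at infinity'' rather than by descent to a canonical form. It introduces \emph{oblique} polygons and shows (Lemma~\ref{Lem.DPR.4.3}) that any two oblique $n$-gons can be translated far apart along a line so that the union of their vertex sets is in convex position, whereupon Lemma~\ref{Lem.DPR.1.BC} walks from one to the other; it then shows every \emph{strongly flat} polygon can be sheared into an oblique one (Lemma~\ref{Lem.DPR.4.5}), every flat polygon with at least five vertices can be made strongly flat (Lemma~\ref{Lem.DPR.4.4}), and finally that every lattice pentagon is flat or becomes flat after one insertion--deletion pair (Theorem~\ref{Thm.DPR.4.1}). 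If you want to salvage your outline, the most direct fix is to replace the descent-to-canonical-form machinery with this flat/oblique normalization, and to supply the missing insertion lemma in the precise form of Theorem~\ref{Thm.DPR.4.1}.
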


In order to prove Theorem \ref{thm.main.4}, we will study a particular connected component of the subgraph induced in $\Lambda(2)$ by the polygons with $n$ or $n+1$ vertices. The proof of Theorem \ref{thm.main.4} consists in showing that this connected component is the whole subgraph when $n$ is equal to $5$. We conclude the article in Section~\ref{Sec.DPR.6} by asking a number of questions. Part of these questions arise naturally from our results, and in particular from the intriguing behavior of the subgraphs induced in $\Lambda(d)$ by the polytopes with $n$ or $n+1$ vertices. We will also mention a  number of other subgraphs of $\Gamma(d)$, whose study may be interesting.

\section{The connectivity of $\Gamma(d)$}\label{Sec.DPR.1}

In this section we investigate the connectedness of $\Gamma(d)$ itself and of its subgraphs induced by the polytopes with $n$ and $n+1$ vertices, where $n\geq{d+1}$. We also obtain precise bounds on the diameter of these subgraphs. Note that from now on, it is always assumed that $d$ is not less than $2$.

Consider a $d$-dimensional polytope $P$ contained in $\mathbb{R}^d$. We will denote by $\mathrm{aff}(F)$ the affine hull of a face $F$ of $P$. If $F$ is a facet, then $\mathrm{aff}(F)$ is a hyperplane of $\mathbb{R}^d$ and we denote by $H_F^-(P)$ the closed half-space of $\mathbb{R}^d$ bounded by $\mathrm{aff}(F)$ such that $P\cap{H_F^-(P)}=F$. For any vertex $v$ of $P$, the set
\begin{equation}
C_v(P)=\bigcap_{F\in\mathcal{F}}H_F^-(P)\mbox{,}
\end{equation}
where $\mathcal{F}$ is the set of the facets of $P$ incident to $v$, is a $d$-dimensional polyhedral cone pointed at $v$. This cone is exactly the set of the points $x\in\mathbb{R}^d$ such that the convex hull of $P\cup\{x\}$ does not admit $v$ as a vertex. By this remark, we immediately obtain the following lemma.

\begin{lem}\label{Lem.DPR.1.A}
Consider a $d$-dimensional polytope $P$ contained in $\mathbb{R}^d$. A point $x$ of $\mathbb{R}^d$ can be inserted in $P$ if and only if it does not belong to $P$ and, for every vertex $v$ of $P$, it does not belong to $C_v(P)$.
\end{lem}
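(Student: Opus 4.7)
The plan is to deduce the lemma from the characterization asserted in the paragraph preceding it, namely that $C_v(P)$ coincides with the set of $x\in\mathbb{R}^d$ for which $v$ fails to be a vertex of $\mathrm{conv}(P\cup\{x\})$. Granted this characterization, unpacking the definition of insertion splits the requirement that $\mathrm{conv}(P\cup\{x\})$ have vertex set $\mathcal{V}\cup\{x\}$ into two parts. First, $x$ itself must be a vertex of the new hull, which holds precisely when $x\notin P$: a point outside $P$ is automatically extreme in the enlarged hull, while a point of $P\setminus\mathcal{V}$ leaves the hull unchanged and fails to appear as a vertex. Second, every $v\in\mathcal{V}$ must survive as a vertex, which by the characterization amounts to $x\notin C_v(P)$ for all such $v$. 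These are exactly the two conditions of the lemma.

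It then remains to justify the characterization of $C_v(P)$. Writing $H_F^+(P)$ for the closed half-space opposite to $H_F^-(P)$, so that $P\subseteq H_F^+(P)$, the easy direction runs as follows: if $x\notin C_v(P)$, then $x$ lies in the open half-space $\mathrm{int}(H_F^+(P))$ for some facet $F$ of $P$ incident to $v$. It follows that $\mathrm{conv}(P\cup\{x\})\subseteq H_F^+(P)$, and a short signed-distance check shows $\mathrm{aff}(F)\cap\mathrm{conv}(P\cup\{x\})=F$; hence $F$ remains a facet of the new hull and $v$, being a vertex of $F$, survives as a vertex.

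For the harder direction, I would use that $C_v(P)$ is the point-reflection through $v$ of the tangent cone $T_v(P)=\bigcap_{F\in\mathcal{F}} H_F^+(P)$: since $v\in\mathrm{aff}(F)$ for every $F\in\mathcal{F}$, reflecting through $v$ fixes each hyperplane $\mathrm{aff}(F)$ and swaps $H_F^+(P)$ with $H_F^-(P)$. So if $x\in C_v(P)\setminus\{v\}$, then $2v-x\in T_v(P)$, and expressing $2v-x$ as $v+t(q-v)$ with $t>0$ and $q\in P$ produces a point $p:=v+\varepsilon(v-x)\in P$ with $\varepsilon=1/t>0$. Rearranging gives $v=\frac{1}{1+\varepsilon}\,p+\frac{\varepsilon}{1+\varepsilon}\,x$, a strict convex combination of two points of $\mathrm{conv}(P\cup\{x\})$ both distinct from $v$; hence $v$ is not extreme in that hull and so not a vertex.

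The main obstacle I anticipate is setting up the reflection identity $C_v(P)=2v-T_v(P)$ together with the standard representation $T_v(P)=v+\mathrm{cone}(P-v)$ cleanly, as this is where the geometric content of the lemma lies. Once these are in hand, both directions follow from short half-space or signed-distance arguments, and the lemma assembles immediately from the definition of insertion.
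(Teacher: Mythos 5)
Your proof is correct and follows the same route as the paper: the paper asserts, in the paragraph preceding the lemma, that $C_v(P)$ is exactly the set of points $x$ for which $v$ fails to be a vertex of the convex hull of $P\cup\{x\}$, and then declares the lemma immediate from this remark, which is precisely your first paragraph. The only difference is that you additionally justify that characterization (the supporting-hyperplane argument for one direction and the point-reflection of the tangent cone $T_v(P)=v+\mathrm{cone}(P-v)$ for the other), a step the paper omits entirely; both halves of your argument are sound.
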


Lemma \ref{Lem.DPR.1.A} will be instrumental to prove the connectedness of a number of subgraphs of $\Gamma(d)$ in this section and the next. We will also make use of the following technical lemma that describes how the cones $C_v(P)$ are placed relatively to the supporting hyperplanes of the faces of a polytope $P$.

\begin{lem}\label{Lem.DPR.1.B}
Consider a proper face $F$ of a $d$-dimensional polytope $P$ contained in $\mathbb{R}^d$. Let $H$ be a hyperplane such that $F=P\cap{H}$, and $H^-$ the half-space of $\mathbb{R}^d$ bounded by $H$ and such that $F=P\cap{H^-}$. For any vertex $v$ of $F$, $C_v$ is a subset of $H^-$ and $C_v\cap{H}$ is a subset of $\mathrm{aff}(F)$.
\end{lem}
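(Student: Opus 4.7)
The strategy is to parametrize the cone $C_v(P)$ explicitly in terms of points of $P$ and then to derive both conclusions from a single signed computation. The parametrization step is the main (modest) obstacle; once in place, the rest is a one-line sign argument together with its equality case.

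For every facet $F'$ of $P$ incident to $v$, fix an outer normal $\vec{n}_{F'}$, so that $H_{F'}^-(P)=\{x:\vec{n}_{F'}\cdot(x-v)\geq 0\}$. Then $C_v(P)-v$ is the intersection of the half-spaces $\vec{n}_{F'}\cdot s\geq 0$. Since the tangent cone $T_v(P)-v$ is cut out by the opposite inequalities and coincides with the conic hull of $P-v$, polarity (or a direct sign flip) yields
\begin{equation*}
C_v(P)-v=-\mathrm{cone}(P-v).
\end{equation*}
Consequently, every $y\in C_v(P)$ admits a representation $y=v+\sum_{i=1}^m t_i(v-x_i)$ with $t_i\geq 0$ and $x_i\in P$.

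With this parametrization in hand, let $\vec{n}$ be a unit normal to $H$ pointing into the closed half-space containing $P$ and set $c=\vec{n}\cdot v$ (legitimate because $v\in F\subset H$). A direct computation gives
\begin{equation*}
\vec{n}\cdot(y-v)=-\sum_{i=1}^m t_i(\vec{n}\cdot x_i-c).
\end{equation*}
Each factor $\vec{n}\cdot x_i-c$ is nonnegative since $x_i\in P\subset\{\vec{n}\cdot x\geq c\}$, so $\vec{n}\cdot(y-v)\leq 0$, i.e.\ $y\in H^-$, which proves the first assertion. If in addition $y\in H$, the sum above vanishes; being a sum of nonpositive terms, each summand must be zero, so for every $i$ either $t_i=0$ or $\vec{n}\cdot x_i=c$, the latter forcing $x_i\in P\cap H=F$. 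Dropping the indices with $t_i=0$, the vector $y-v$ becomes a conic combination of vectors of the form $v-x_i$ with $x_i\in F$, hence lies in $\mathrm{aff}(F)-v$, and therefore $y\in\mathrm{aff}(F)$.
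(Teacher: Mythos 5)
Your proof is correct, but it follows a genuinely different route from the paper's. The paper enlarges $C_v(P)$ to the cone $K=\bigcap_{G\in\mathcal{G}}H_G^-(P)$, where $\mathcal{G}$ is the set of facets of $P$ containing the whole face $F$ (not merely the vertex $v$), and then asserts that $K\subset H^-$ and $K\cap H=\mathrm{aff}(F)$; those assertions rest implicitly on normal-fan facts, namely that the outer normal of any supporting hyperplane $H$ with $P\cap H=F$ is a strictly positive combination of the outer normals of the facets containing $F$, and that $\mathrm{aff}(F)$ is the intersection of the affine hulls of those facets. You instead stay at the vertex $v$ and identify $C_v(P)-v$ as the reflection $-\mathrm{cone}(P-v)$ of the tangent cone, which parametrizes every point of $C_v(P)$ by points of $P$ itself; both conclusions then drop out of a single sign computation and its equality case ($\vec{n}\cdot(y-v)\leq 0$, with equality forcing each contributing $x_i$ into $P\cap H=F$). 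What your version buys is self-containment at the level of points rather than facet normals: the only external input is the standard identity $T_v(P)=v+\mathrm{cone}(P-v)$, and the equality case of the inequality directly produces membership in $\mathrm{aff}(F)$ without needing to know which facets contain $F$. What the paper's version buys is brevity and a statement ($C_v\subset K$) that localizes the whole lemma in one inclusion. Both arguments are valid; yours is arguably the more fully justified of the two as written.
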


\begin{proof}
Consider the intersection
$$
K=\bigcap_{G\in\mathcal{G}}H_G^-(P)\mbox{,}
$$
where $\mathcal{G}$ is the set of all the facets of $P$ incident to $F$. First observe that $K$ is a subset of $H^-$. In addition $K\cap{H}$ is precisely the affine hull of $F$. Now consider a vertex $v$ of $F$. Since $C_v(P)$ is a subset of $K$, the result follows.
\end{proof}

We need to state another elementary result. By the following lemma, all the vertices of a $d$-dimensional polytope can be deleted from it except at most $d+1$ of them. In particular, the only polytopes whose none of the vertices can be deleted from are the simplices. The proof of this result, by induction on the dimension, is straightforward and will be omitted.

\begin{lem}\label{Lem.DPR.1.BB}
If $P$ is a $d$-dimensional polytope, then $P$ cannot be a pyramid over more than $d+1$ of its facets.
\end{lem}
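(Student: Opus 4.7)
The plan is to prove the bound directly, by showing that the apices of the pyramid structures form an affinely independent subset of $\mathbb{R}^d$, which immediately forces their number to be at most $d+1$.

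Suppose that $P$ is a pyramid with apex $v_i$ over a facet $F_i$ for each $i$ in $\{1,\ldots,k\}$, where the facets $F_1,\ldots,F_k$ are pairwise distinct. The key observation is that $\mathrm{vert}(F_i)=\mathrm{vert}(P)\setminus\{v_i\}$, which follows at once from the definition of a pyramid: $P=\mathrm{conv}(F_i\cup\{v_i\})$ and $v_i\notin\mathrm{aff}(F_i)$, so the vertices of $P$ are exactly those of $F_i$ together with $v_i$. Consequently, for every $j\neq i$, the apex $v_j$ belongs to $F_i$ and hence to $\mathrm{aff}(F_i)$. This gives $\mathrm{aff}\{v_j:j\neq i\}\subseteq\mathrm{aff}(F_i)$, while the defining property of a pyramid yields $v_i\notin\mathrm{aff}(F_i)$. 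Combining these two facts produces $v_i\notin\mathrm{aff}\{v_j:j\neq i\}$ for every $i$, which is precisely the standard criterion for $v_1,\ldots,v_k$ to be affinely independent. Since $\mathbb{R}^d$ contains at most $d+1$ affinely independent points, we conclude $k\leq d+1$.

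The main point to be careful about is identifying the vertex sets of the $F_i$ correctly and verifying that distinct pyramid facets yield distinct apices; the latter follows because the apex is uniquely determined by its facet as the unique vertex of $P$ not lying on it, so if $v_i=v_j$ the vertex sets of $F_i$ and $F_j$ coincide and the facets agree, contradicting distinctness. I do not expect any significant technical obstacle; the argument is purely geometric. An alternative route, closer to the induction on dimension hinted at by the authors, would fix one facet $F_1$ and apply the inductive hypothesis after observing that $F_1$ is itself a pyramid over each $F_1\cap F_j$ with apex $v_j$, but the direct affine-independence argument above is more transparent.
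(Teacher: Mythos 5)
Your proof is correct. The paper itself omits the argument, remarking only that it proceeds ``by induction on the dimension'' and is straightforward, so your direct route is genuinely different from the one the authors had in mind. The two key facts you use both check out: first, if $P=\mathrm{conv}(F_i\cup\{v_i\})$ with $v_i\notin\mathrm{aff}(F_i)$, then indeed $\mathrm{vert}(F_i)=\mathrm{vert}(P)\setminus\{v_i\}$, so every other apex $v_j$ lies in $\mathrm{aff}(F_i)$ while $v_i$ does not; second, the criterion ``no $v_i$ lies in the affine hull of the remaining $v_j$'' is exactly affine independence, which caps the number of apices at $d+1$ in $\mathbb{R}^d$. Your side remark that distinct pyramid facets give distinct apices is also handled correctly (equal apices would force equal vertex sets, hence equal facets). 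What the direct argument buys is a one-step global proof with no base case and no need to verify that the pyramid structure restricts well to faces; what the inductive route would buy is essentially nothing extra here, and it requires the additional observation (which you note) that $F_1$ is itself a pyramid over each $F_1\cap F_j$ with apex $v_j$, a fact that needs its own small verification. Your version is the cleaner of the two.
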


We now investigate the connectedness of the subgraph induced in $\Gamma(d)$ by the polytopes with $n$ or $n+1$ vertices. The following lemma deals with a special case that will pop up several times thereafter. We say that a subset $\mathcal{A}$ of $\mathbb{R}^d$ is in \emph{convex position} if any finite subset of $\mathcal{A}$ is the vertex set of a polytope.

\begin{lem}\label{Lem.DPR.1.BC}
Let $\mathcal{A}$ be a $d$-dimensional subset of $\mathbb{R}^d$ in convex position. For any $n\geq{d+1}$, the polytopes with $n$ or $n+1$ vertices whose vertex set is a subset of $\mathcal{A}$ induce a connected subgraph of $\Gamma(d)$ of diameter at most $2n+2$. Moreover, two polytopes with $n$ vertices have distance at most $2n$ in this subgraph.
\end{lem}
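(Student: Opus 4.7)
The plan is to connect $\mathcal{V}$ to $\mathcal{V}'$ by alternating insertions drawn from $B:=\mathcal{V}'\setminus\mathcal{V}$ with deletions drawn from $A:=\mathcal{V}\setminus\mathcal{V}'$. Since $\mathcal{A}$ is in convex position, every finite subset of $\mathcal{A}$ has all its points extreme, so inserting any $b\in\mathcal{A}$ absent from the current vertex set is automatically a legal move whose result remains in convex position. Thus the only substantive requirement along the path is that each deletion leaves a $d$-dimensional polytope, while the size of the current vertex set remains in $\{n,n+1\}$.

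The core claim is that if $\mathcal{V}_i\subseteq\mathcal{A}$ is $d$-dimensional of size $n$ and $A_i:=\mathcal{V}_i\setminus\mathcal{V}'$ is nonempty, then some pair $(b,a)\in B_i\times A_i$ (with $B_i:=\mathcal{V}'\setminus\mathcal{V}_i$) makes the insertion of $b$ followed by the deletion of $a$ valid, that is, $(\mathcal{V}_i\setminus\{a\})\cup\{b\}$ is still $d$-dimensional. Indeed, if some $a\in A_i$ is not an apex of $\mathcal{V}_i$, then $\mathcal{V}_i\setminus\{a\}$ already spans $\mathbb{R}^d$ and any $b\in B_i$ does the job. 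Otherwise every $a\in A_i$ is an apex, which by Lemma~\ref{Lem.DPR.1.BB} forces $|A_i|\leq d+1$; denoting by $H_a$ the hyperplane affinely spanned by $\mathcal{V}_i\setminus\{a\}$, one has $\mathcal{V}_i\cap\mathcal{V}'\subseteq\bigcap_{a\in A_i}H_a$. Because $\mathcal{V}'=(\mathcal{V}_i\cap\mathcal{V}')\cup B_i$ is $d$-dimensional, some $b\in B_i$ must lie outside some $H_a$, and since $\mathcal{V}_i\setminus\{a\}$ spans $H_a$, adding such a $b$ restores $d$-dimensionality. Each swap of this kind reduces $|A_i|+|B_i|$ by two.

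The proof then proceeds by case analysis on $|\mathcal{V}|$ and $|\mathcal{V}'|$. When $|\mathcal{V}|=|\mathcal{V}'|=n$, iterating the swap gives a path of length $2|A|\leq 2n$, proving the second assertion. When the sizes are $n$ and $n+1$ in either order, a similar iteration (with one extra insertion at the end or one preparatory deletion at the start) yields at most $2n+1$ moves. The delicate case is $|\mathcal{V}|=|\mathcal{V}'|=n+1$, where the first move must be a deletion. If some $a\in A$ is not an apex of $\mathcal{V}$, deleting it reduces to the size-$n$-to-size-$(n+1)$ setting for a total of $2|A|\leq 2n+2$ moves. Otherwise every $a\in A$ is an apex, forcing $|A|\leq d+1$; since $|\mathcal{V}|=n+1\geq d+2$, some non-apex vertex $w\in\mathcal{V}$ exists and necessarily lies in $\mathcal{V}\cap\mathcal{V}'$. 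Deleting $w$ and then running the size-$n$-to-size-$(n+1)$ strategy with $A_0=A$ and $B_0=B\cup\{w\}$ gives at most $2|A|+2\leq 2(d+1)+2\leq 2n+2$ moves, using the hypothesis $n\geq d+1$. The principal obstacle throughout is establishing feasibility of each deletion in the apex-only regime; this is resolved by the hyperplane argument above, combined with the $d$-dimensionality of the target $\mathcal{V}'$.
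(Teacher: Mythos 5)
Your argument is correct and follows the same overall strategy as the paper's proof: alternate insertions of target vertices (always legal because $\mathcal{A}$ is in convex position) with deletions of non-target vertices, and absorb the extremal sizes into two extra moves. The genuine difference is that you justify the one step the paper passes over. The paper's proof only invokes Lemma~\ref{Lem.DPR.1.BB} to assert that after each insertion \emph{some} vertex other than the inserted one can be deleted, and then claims that the number of vertices shared with the target increases; as written this does not exclude the possibility that every deletable vertex already belongs to the target, in which case no progress would be made. Your apex analysis closes exactly this gap: when every candidate $a\in A_i$ is an apex, the inclusion $\mathcal{V}_i\cap\mathcal{V}'\subseteq\bigcap_{a\in A_i}H_a$ together with the $d$-dimensionality of $\mathcal{V}'$ produces a $b\in B_i$ lying outside some $H_a$, and since $\mathcal{V}_i\setminus\{a\}$ spans the hyperplane $H_a$, inserting that particular $b$ makes $a$ deletable. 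This also shows that in this regime the inserted vertex cannot be chosen arbitrarily, which is a real refinement of the paper's "insert any vertex of $Q$ not in $P$" step. Your separate treatment of the $(n+1,n+1)$ case is more elaborate than the paper's (which simply deletes an arbitrary vertex from each polytope and adds $2$ to the bound), but it is sound and yields the same constants.
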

\begin{proof}
Let $P$ and $Q$ be two polytopes with $n$ or $n+1$ vertices such that the vertex sets of $P$ and $Q$ are subsets of $\mathcal{A}$. By Lemma \ref{Lem.DPR.1.BB}, we can assume without loss of generality that both $P$ and $Q$ have exactly $n$ vertices. Since $\mathcal{A}$ is in convex position, the vertices of the convex hull of $P\cup{Q}$ are exactly the vertices of $P$ and the vertices of $Q$. As a consequence, any vertex of $Q$ that is not already a vertex of $P$ can be inserted in $P$. After such an insertion, Lemma \ref{Lem.DPR.1.BB} makes sure that one can, in turn, delete a vertex distinct from the inserted point. The polytope resulting from this sequence of two moves shares at least one more vertex with $Q$ than $P$. Repeating this process therefore builds a path between $P$ and $Q$ in the considered subgraph of $\Gamma(d)$. The length of this path is at most the sum of the number of vertices of $P$ with the number of vertices of $Q$. Taking into account the two deletion moves that have possibly been performed initially to build $P$ and $Q$ from polytopes with $n+1$ vertices, we obtain the desired bound on the diameter of that subgraph.
\end{proof}

Lemma \ref{Lem.DPR.1.BC} is instrumental already in the proof of the following theorem.

\begin{thm}\label{Thm.DPR.1.A}
For any $n\geq{d+1}$, the polytopes with $n$ or $n+1$ vertices induce a connected subgraph of $\Gamma(d)$ of diameter at most $6n-2$. Moreover, two polytopes with $n$ vertices are distant of at most $6n-4$ in this subgraph.
\end{thm}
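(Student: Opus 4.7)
The plan is to bootstrap Lemma~\ref{Lem.DPR.1.BC} by inserting an auxiliary polytope $R$ between $P$ and $Q$ whose vertex set lies in convex position with the vertex sets of both $P$ and $Q$.

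As a first step, if $P$ or $Q$ has $n+1$ vertices, I would delete a vertex from each in order to reduce to the $n$-vertex case. This is possible since $n+1\geq d+2$, so Lemma~\ref{Lem.DPR.1.BB} guarantees a deletable vertex, at a cost of at most $2$ moves in total.

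The core construction is then an $n$-vertex polytope $R$ such that the union of the vertex sets of $P$ and $R$, and that of the vertex sets of $R$ and $Q$, are each in convex position. Two applications of Lemma~\ref{Lem.DPR.1.BC}, one to the pair $(P,R)$ and one to $(R,Q)$, will each yield a path of length at most $2n$, totalling at most $4n+2$ moves including the initial reduction, which sits within the claimed bound $6n-2$ (and within $6n-4$ in the $n$-vertex case, where the initial reduction is unnecessary). Should a single intermediate $R$ be insufficient for some configurations, a second intermediate polytope can be interposed between $R$ and $Q$, adding at most $2n$ more moves while still keeping the total within the desired bound.

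The main obstacle is the construction of $R$ so that the convex position conditions hold. Using Lemma~\ref{Lem.DPR.1.A}, I would first select a point $r_0$ outside $P\cup Q$ and outside every cone $C_v(P)$ and $C_v(Q)$, so that $r_0$ is insertable in both polytopes, and then place the $n$ vertices of $R$ as a suitably spread cluster around $r_0$. Lemma~\ref{Lem.DPR.1.B} is instrumental here: it pins each cone $C_v$ inside the outer half-space of any face containing $v$, which permits a direction-by-direction argument to ensure that every vertex of $P$ and $Q$ retains an extremal certifying direction in its normal cone, pointing away from the new vertices of $R$. A naive single-direction placement can eclipse vertices of $P$ or $Q$ on the far side, so the construction must spread the vertices of $R$ carefully; this extremality bookkeeping is the main technical effort of the proof.
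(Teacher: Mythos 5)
There is a genuine gap, and it sits exactly where you defer to ``extremality bookkeeping'': the existence of the intermediate polytope $R$ is the whole difficulty, and the construction you sketch cannot produce it. The condition that $\mathcal{V}(P)\cup\mathcal{V}(R)$ be in convex position is two-sided. One checks easily that a vertex $v$ of $P$ survives in $\mathrm{conv}(P\cup R)$ if and only if $R\cap C_v(P)=\emptyset$, and, symmetrically, a vertex $r$ of $R$ survives if and only if $P\cap C_r(R)=\emptyset$. Your construction addresses only the first half (choosing $r_0$ outside every $C_v(P)$ and $C_v(Q)$, then arguing that the vertices of $P$ and $Q$ keep certifying directions). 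The second half is fatal for a cluster: if the $n$ vertices of $R$ lie in a small neighborhood of $r_0$, the cones $C_r(R)$ emanating from that cluster cover all of space outside a few narrow slabs whose cross-section has the diameter of the cluster, so $P$ and $Q$ would each have to fit inside such a slab---impossible once they are larger than $R$. Enlarging or flattening $R$ so that it ``shadows'' $P$ and $Q$ runs into the first condition instead: for every vertex $v$ of $P$ other than the one minimizing a chosen direction $u$, the cone $C_v(P)=v-\mathrm{cone}\{w-v:w\in\mathcal{V}(P)\}$ is unbounded in the direction $u$, so a wide polytope placed beyond a separating hyperplane will generically meet some $C_v(P)$ and eclipse $v$ (a quadrilateral with an overhanging top vertex already defeats the natural ``reflect and translate'' candidate). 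Finding one $R$ that threads all the cones of $P$ and of $Q$ simultaneously---when, as in the paper's own Lemma~\ref{Lem.DPR.1.C}, $Q$ can sit inside $\bigcap_v C_v(P)$ for most $v$---is at least as hard as the original problem, and no argument is given. There is also an arithmetic slip: your fallback of a second intermediate costs $2+3\cdot 2n=6n+2$ moves, which exceeds the claimed bound $6n-2$, so you cannot afford it even in principle; and the existence of a point $r_0$ insertable in both $P$ and $Q$ simultaneously is itself asserted without proof.

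The paper avoids constructing a static intermediate altogether. It spends at most $2d-2$ insert--delete pairs to turn $P$ into a polytope $P'$ having a facet $E'$ in the hyperplane $\{x_1=\gamma\}$ (and similarly $Q$ into $Q'$ with a facet in $\{x_1=\delta\}$ on the opposite side), then spends at most $2n-2d$ further insert--delete pairs to ``flatten'' $P'$ against $E'$, pulling all remaining vertices arbitrarily close to $E'$ so that every outer half-space $H_G^-(P'')$ of a facet $G\neq E'$ misses $Q''$, and symmetrically for $Q'$. After this reshaping the vertex sets of $P''$ and $Q''$ are jointly in convex position, and a single application of Lemma~\ref{Lem.DPR.1.BC} costs $2n$ more moves; the total $2+2(2d-2)+2(2n-2d)+2n=6n-2$ matches the statement. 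In other words, the paper modifies $P$ and $Q$ themselves---each modification staying within the $n$/$(n{+}1)$-vertex subgraph---precisely because mutual convex position is too strong to arrange with the original polytopes fixed. If you want to salvage your plan, you would need to prove an existence result for $R$ that is essentially equivalent to this flattening argument.
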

\begin{proof}
Consider two $d$-dimensional polytopes $P$ and $Q$ contained in $\mathbb{R}^d$ both with $n$ or $n+1$ vertices. Since it is always possible to delete some vertex from a polytope with more than $d+1$ vertices, we can assume without loss of generality that both $P$ and $Q$ have $n$ vertices. Denote
$$
\gamma=\min\{x_1:x\in{P\cup{Q}}\}\mbox{.}
$$

We will assume that some point $x$ of $P$ satisfies $x_1=\gamma$, which can be done by exchanging $P$ and $Q$, if need be. Let $M$ denote the hyperplane made up of the points $x$ such that $x_1=\gamma$. The intersection of $P$ and $M$ is a non-empty face of $P$. This face, that we denote by $E$, is sketched on the left of Fig. \ref{Fig.DPR.1.Z}. As illustrated in the figure, if $E$ is not a facet of $P$, it is always possible to insert in $P$ some point in $M$ that does not belong to the affine hull of $E$ (for instance, the point colored green on the left of Fig. \ref{Fig.DPR.1.Z}). Indeed, by Lemma \ref{Lem.DPR.1.B}, for any vertex $v$ of $E$, the intersection $C_v(P)\cap{M}$ is a subset of $\mathrm{aff}(E)$. Moreover, for any vertex $v$ of $P$ that is not incident to $E$, $C_v(P)\cap{M}$ is necessarily disjoint from $E$. Since $C_v(P)\cap{M}$ and $E$ both are closed sets, it follows from Lemma~\ref{Lem.DPR.1.A} that any point $x$ in $M$ that does not belong to $\mathrm{aff}(E)$ can be inserted in $P$, provided it is close enough to $E$. After $x$ has been inserted in $P$, any vertex of $P$ that is not incident to $E$ can be deleted: if such a vertex were deleted from $P$, then the resulting polytope would be at least $(d-1)$-dimensional. Therefore, deleting it after $x$ has been inserted in $P$ results in a $d$-dimensional polytope because $x$ is not contained in the affine hull of $E$. Repeating this procedure, one can transform $P$ into a polytope $P'$ such that $P'\cap{M}$ is a facet of $P'$, using at most $2d-2$ moves (at most $d-1$ insertion moves, each followed by a deletion move). We denote $E'=P'\cap{M}$. Now call
$$
\delta=\max\{x_1:x\in{Q}\}\mbox{,}
$$ 
and let $N$ be the hyperplane made up of the points $x$ such that $x_1=\delta$. As above, if $Q\cap{N}$ is not a facet of $Q$, we can perform a sequence of at most $d-1$ insertion moves on $Q$, that insert points in $N$, each followed by a deletion move, in order to obtain a polytope $Q'$ such that $Q'\cap{N}$ is a facet of $Q'$. In the remainder of the proof, we denote that facet by $F'$. Note that, the sketch on the left of Fig. \ref{Fig.DPR.1.Z} depicts the case when $Q'=Q$.

Now, consider a point in $H_{E'}^-(P')\mathord{\setminus}\mathrm{aff}(E')$ whose orthogonal projection on $\mathrm{aff}(E')$ is contained in the relative interior of $E'$ as, for instance, the points colored green next to $E'$ on the right of Fig. \ref{Fig.DPR.1.Z}. Observe that this point can be inserted in $P'$ provided it is close enough to $E'$. We will perform a sequence of such insertion moves. Note that, while the first insertion move is in the neighborhood of $E'$, the next insertion moves will be made in the neighborhood of one of the facets introduced by the preceding insertion, in such a way that the orthogonal projection on $\mathrm{aff}(E')$ of each inserted point is contained in the relative interior of $E'$. After each insertion move, a deletion move will be performed on a vertex of $P'$ that is not incident to $E$ (note that any such vertex, colored red on the right of Fig. \ref{Fig.DPR.1.Z}, can be deleted). Doing so, we can transform $P'$ into a polytope $P''$ that admits $E'$ as a facet and whose vertices not incident to $E'$ can be placed arbitrarily close to $E'$. In particular we can require that, for any facet $G$ of $P''$ other than $E'$, $H_G^-(P'')$ and $F'$ are disjoint.
\begin{figure}
\begin{center}
\includegraphics{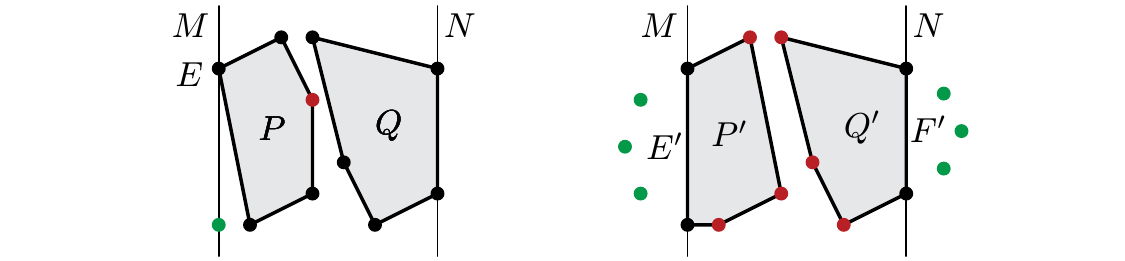}
\caption{The moves that transform $P$ into $P''$ and $Q$ into $Q''$.}
\label{Fig.DPR.1.Z}
\end{center}
\end{figure}
Similarly, we can find a sequence of insertion moves, each followed by a deletion move that transform $Q'$ into a polytope $Q''$ that admits $F'$ as a facet and such that, for any other facet $G$ of $Q''$, $H_G^-(Q'')$ and $E'$ are disjoint.

By that construction, all the vertices of $P''$ and all the vertices of $Q''$ are vertices of the convex hull of $P''\cup{Q''}$. Hence, by Lemma \ref{Lem.DPR.1.BC}, one can transform $P''$ into $Q''$ by a sequence of at most $2n$ moves such that each insertion move is followed by a deletion move. We have done at most $2n-2d$ moves to transform $P'$ into $P''$ or $Q'$ into $Q''$, and at most $2n$ moves to transform $P''$ into $Q''$. Taking into account the two deletion moves that have possibly been performed initially to build $P$ and $Q$ from polytopes with $n+1$ vertices, we obtain the desired upper bound on the diameter of the subgraph induced in $\Gamma(d)$ by the polytopes with $n$ or $n+1$ vertices.
\end{proof}

A consequence of Theorem \ref{Thm.DPR.1.A} is that it is always possible to transform two polytopes into one another using a sequence of elementary moves.

\begin{cor}\label{Cor.DPR.1.A}
$\Gamma(d)$ is connected.
\end{cor}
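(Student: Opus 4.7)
The plan is to deduce the corollary directly from Theorem \ref{Thm.DPR.1.A} by chaining together the connected subgraphs it provides. For each integer $n\geq{d+1}$, let $G_n$ denote the subgraph of $\Gamma(d)$ induced by the $d$-dimensional polytopes with $n$ or $n+1$ vertices. Theorem \ref{Thm.DPR.1.A} asserts that each $G_n$ is connected.

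Next I would observe that consecutive subgraphs $G_n$ and $G_{n+1}$ share all polytopes with exactly $n+1$ vertices, and that this shared vertex set is nonempty: for any $n\geq{d+1}$, one can exhibit a $d$-dimensional polytope with $n+1$ vertices (for example by taking $n+1$ points in sufficiently general position in $\mathbb{R}^d$, or by iteratively inserting vertices into a $d$-simplex, which is possible by an argument analogous to the one used in the proof of Theorem \ref{Thm.DPR.1.A}). Thus any such polytope lies in both $G_n$ and $G_{n+1}$, so $G_n\cup G_{n+1}$ is connected in $\Gamma(d)$. By induction, the union $\bigcup_{n\geq{d+1}}G_n$ is connected.

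To finish, I would note that every vertex of $\Gamma(d)$ belongs to this union: a $d$-dimensional polytope in $\mathbb{R}^d$ has at least $d+1$ vertices, and if it has exactly $n$ vertices then it lies in $G_n$. Hence $\Gamma(d)$ coincides with $\bigcup_{n\geq{d+1}}G_n$ and is therefore connected.

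There is no real obstacle here; the corollary is essentially a bookkeeping consequence of Theorem \ref{Thm.DPR.1.A}. The only point that requires any comment is the existence of polytopes with any prescribed number of vertices at least $d+1$, which guarantees that the overlap between consecutive $G_n$ is nonempty and hence that the chain of unions is genuinely connected rather than vacuously so for some range of $n$.
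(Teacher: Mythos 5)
Your proof is correct, and it reaches the corollary by a slightly different route than the paper. The paper fixes two polytopes $P$ and $Q$ with $n\leq m$ vertices respectively, uses Lemma~\ref{Lem.DPR.1.BB} to guarantee that a polytope with more than $d+1$ vertices always has a deletable vertex, performs $m-n$ deletion moves on $Q$ to equalize the vertex counts, and then applies Theorem~\ref{Thm.DPR.1.A} once. You instead glue the connected subgraphs $G_n$ along their overlaps, which requires checking that for every $n\geq d+2$ there exists a $d$-dimensional polytope with exactly $n$ vertices (immediate, e.g.\ from $n$ points in convex position), but avoids invoking Lemma~\ref{Lem.DPR.1.BB} at this stage. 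Both arguments are sound one-step deductions from Theorem~\ref{Thm.DPR.1.A}; the paper's is marginally more economical because the deletability lemma is already in hand, while yours makes the global structure of $\Gamma(d)$ as a chain of overlapping connected pieces explicit, which is a perfectly good way to see the result. One small point worth noting in your favor: since an edge of $\Gamma(d)$ always joins a polytope with $m$ vertices to one with $m+1$ vertices, your union $\bigcup_{n\geq d+1}G_n$ in fact recovers all of $\Gamma(d)$ edge-for-edge, not merely a spanning connected subgraph, though connectivity only needs the weaker statement.
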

\begin{proof}
Let $P$ and $Q$ be two $d$-dimensional polytopes contained in $\mathbb{R}^d$. Say that $P$ has $n$ vertices and $Q$ has $m$ vertices. We can assume without loss of generality that $n\leq{m}$. By Lemma \ref{Lem.DPR.1.BB}, it is always possible to delete some vertex from a polytope with more than $d+1$ vertices, there is a (possibly empty) sequence of deletion moves that transform $Q$ into a $d$-dimensional polytope with $n$ vertices. The result then follows from Theorem \ref{Thm.DPR.1.A}.
\end{proof}

In the remainder of the section, we look at the distance between two polytopes in $\Gamma(d)$. According to Theorem \ref{Thm.DPR.1.A}, the diameter of the subgraph induced in $\Gamma(d)$ by polytopes with $n$ or $n+1$ vertices is at most $6n-2$. This upper bound is linear in the number $n$ of vertices of the considered polytopes, but it is independent on the dimension, which may be surprising. We obtain a lower bound on that diameter that is reasonably close to our upper bound.

\begin{lem}\label{Lem.DPR.1.C}
For any $n\geq{d+1}$, the subgraph of $\Gamma(d)$ induced by the polytopes with $n$ or $n+1$ vertices has diameter at least $4n-2d$.
\end{lem}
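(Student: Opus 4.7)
My plan is to exhibit two specific polytopes $P$ and $Q$ with $n$ vertices each in $\mathbb{R}^d$ whose distance in the subgraph induced by polytopes with $n$ or $n+1$ vertices is at least $4n-2d$. A natural candidate is to take $P$ as a $d$-polytope with $n$ vertices in convex position (say on a sphere), and $Q$ as a shrunken and translated copy of $P$ lying strictly inside the interior of $P$, so that $V_P\cap V_Q=\emptyset$ and every vertex of $V_Q$ is contained in the interior of $P$.

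I would then analyze an arbitrary path $P=P_0,P_1,\ldots,P_L=Q$ in the subgraph. Since every polytope on the path has $n$ or $n+1$ vertices and both endpoints have $n$ vertices, the moves must alternate between insertions and deletions, with equally many of each; write $L=2k$ where $k$ denotes the common count. A direct counting bound gives $k\geq n$ (each vertex of $V_Q$ must be inserted at some stage and each vertex of $V_P$ must be deleted at some stage), yielding only $L\geq 2n$. To strengthen this to $L\geq 4n-2d$, I would exploit the fact that every vertex of $V_Q$ starts off blocked by $P$, in the sense of Lemma~\ref{Lem.DPR.1.A}.

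The key geometric observation is that a vertex $v\in V_Q$ can be inserted in an intermediate polytope $P_i$ only if $v\notin P_i$ and $v\notin C_u(P_i)$ for every vertex $u$ of $P_i$. Since $v\in\mathrm{int}(P)=\mathrm{int}(P_0)$, at least one deletion of a vertex of $V_P$ must precede the insertion of $v$; and each deletion must itself be preceded by an auxiliary insertion to keep the vertex count in $\{n,n+1\}$. With the proposed construction, I would argue that no intermediate polytope on the path can have more than $d$ vertices of $V_Q$ already inserted while simultaneously leaving the remaining vertices of $V_Q$ unblocked. This structural constraint forces the path to execute at least $n-d$ insertion-deletion pairs in which the inserted point is \emph{not} a vertex of $V_Q$, on top of the $n$ pairs corresponding to the direct swaps. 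Together this yields $k\geq 2n-d$ and hence $L\geq 4n-2d$.

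The main obstacle will be formalizing the claim that at most $d$ vertices of $V_Q$ can be simultaneously unblocked by any $d$-polytope with $n$ or $n+1$ vertices arising on such a path. I expect this to follow from a dimension-counting argument: the union of the cones $C_u$ together with the polytope itself imposes supporting-hyperplane constraints, and unblocking a set $S\subseteq V_Q$ while keeping another set of already-inserted $V_Q$ vertices in position forces the current polytope's facet structure to be compatible with certain half-space conditions, which by Lemma~\ref{Lem.DPR.1.B} cannot accommodate more than $d$ freshly unblocked vertices of $V_Q$ generically. A careful choice of the shrunken copy $Q$ (e.g., generic relative to the cone arrangement of $P$) should make this dimensional bound tight and deliver the claimed lower bound.
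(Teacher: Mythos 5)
There is a genuine gap here, on two levels. First, the blocking mechanism your construction relies on is too weak. You block the vertices of $Q$ by containment, i.e., by violating the condition $v\notin P_i$ of Lemma~\ref{Lem.DPR.1.A}. But containment-blocking is not robust under deletions: as soon as the surviving vertices of $V_P$ (together with the auxiliary inserted points, which the adversary may place wherever convenient) all lie in an open half-space whose boundary passes near $Q$, the whole of $Q$ escapes the current hull and \emph{all} of its vertices become insertable essentially at once. For $Q$ a tiny copy near the centroid of $P$ in the plane, this happens after roughly $n/2$ deletions, so one expects paths of length about $3n$, which is below $4n-2d=4n-4$ already for $n\geq5$; your construction therefore cannot certify the claimed bound. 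The paper instead places $Q$ \emph{outside} $P$, inside the intersection of the cones $C_v(P)$ over all but two vertices $v$ of a polygon $P$ lying in a fixed $2$-plane (the remaining $d-2$ vertices of $P'$ and $Q'$ being pyramid apices). The crucial point is the monotonicity $C_v(T)\subseteq C_v(R')$ whenever the three vertices of a triangle $T$ are vertices of the current polytope $R'$: as long as any three of the designated vertices of $P$ survive, some cone of the triangle they span still contains all of $Q$, so by Lemma~\ref{Lem.DPR.1.A} no vertex of $Q$ can be inserted. This forces $n-d$ deletions, each preceded by an insertion of a non-$Q$ point, \emph{before} the first $Q$-vertex insertion, which is exactly the temporal ordering your accounting needs.

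Second, the structural claim you lean on --- that no intermediate polytope can have more than $d$ vertices of $V_Q$ already inserted while the remaining vertices of $V_Q$ are unblocked --- is only asserted, and it is false as stated: on any valid path, the polytope just before the insertion of the last vertex of $V_Q$ already has $n-1>d$ vertices of $V_Q$ among its vertices, and that last vertex is unblocked. Even a corrected version of such a ``simultaneity'' bound would not directly produce the $n-d$ extra non-$V_Q$ insertions you need, since it says nothing about \emph{when} along the path the wasted insertion--deletion pairs must occur. Both the construction and the counting argument need to be replaced along the lines sketched above.
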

\begin{proof}
Consider the two polygons $P$ and $Q$ sketched on the left of Fig. \ref{Fig.DPR.1.A}. We will assume that each of these polygons has $n-d+2$ vertices. As can be seen on the figure, $P$ and $Q$ are placed in such a way that for any vertex $v$ of $P$ distinct from the vertices of its longest edge, $Q$ is a subset of the cone $C_v(P)$. The intersection of all these cones is shown as a striped surface in the figure. Observe that, for any $d\geq3$, we can build a $d$-dimensional polytope by considering a pyramid over $P$, and then a pyramid over that pyramid and so on. We will call $P'$ the resulting $d$-dimensional polytope, and $Q'$ the $d$-dimensional polytope obtained using the same procedure but starting from $Q$ instead of $P$.
\begin{figure}
\begin{center}
\includegraphics{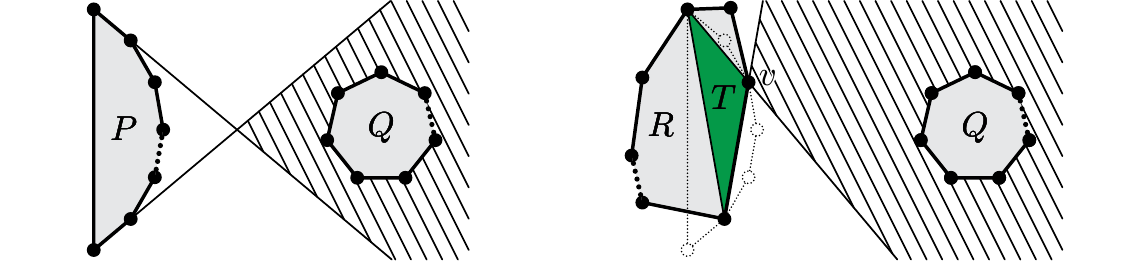}
\caption{Polygons $P$, $Q$, $R$, and the triangle $T$.}
\label{Fig.DPR.1.A}
\end{center}
\end{figure}
By construction, both $P'$ and $Q'$ have $n$ vertices. We require, which can be done without loss of generality, that $P'$ and $Q'$ do not share a vertex.

Now consider a sequence of insertion moves, each followed by a deletion move that transform $P'$ into $Q'$. Consider the first move in that sequence that introduces a vertex of $Q$. This move is performed on a polytope $R'$. We claim that, when this move occurs, all except maybe two vertices of $P$ have been deleted from $P'$. Indeed, otherwise, the intersection of $R'$ with the plane that contains $P$ and $Q$ is a polygon $R$ that shares at least three vertices with $P$. As can be seen on the right of Fig. \ref{Fig.DPR.1.A}, where the trace of $P$ is sketched with dotted lines, these three vertices form a triangle $T$, depicted in green.  Note that the vertices of $R$ are possibly not all vertices of $R'$: some of them may be the intersection of a higher dimensional face of $R'$ with the plane that contains $P$ and $Q$. However, $R$ and $R'$ necessarily share the three vertices of $T$. Now observe that $T$ admits at least one vertex $v$ such that $Q$ is a subset of the cone $C_v(T)$. This cone, shown as a striped surface in the figure, is in turn a subset of $C_v(R)$. Since $C_v(R)$ is contained in $C_v(R')$, we obtain the inclusion $Q\subset{C_v(R')}$. In particular, no vertex of $Q$ can be inserted in $R'$.

Hence, there must have been at least $n-d$ insertion moves, each followed by a deletion move before $R'$ is reached from $P'$. After that, all the vertices of $Q$ still have to be introduced, which requires at least $n-d+2$ insertion moves and $n-d+2$ deletion moves. Since $P'$ and $Q'$ do not share a vertex, we further need to perform at least $d-2$ insertions and $d-2$ deletions to displace the vertices of $P'$ that are not incident to $P$. As a consequence, transforming $P'$ into $Q'$ requires at least $4n-2d$ moves.
\end{proof}

Note that Theorem \ref{thm.main.1} is obtained by combining Theorem \ref{Thm.DPR.1.A} and Lemma~\ref{Lem.DPR.1.C}. A consequence of these results is that simplices play a central role in $\Gamma(d)$, in the sense that connecting two polytopes with $n$ vertices in $\Gamma(d)$ via a simplex can be much shorter than with any path visiting only polytopes with $n$ or $n+1$ vertices. In fact, according to Lemma \ref{Lem.DPR.1.D}, paths via simplices can be at least half as short when $d$ is fixed and $n$ grows large.

\begin{lem}\label{Lem.DPR.1.D}
The distance in $\Gamma(d)$ between a polytopes with $n$ vertices and a polytope with $m$ vertices is at most $n+m+4d$.
\end{lem}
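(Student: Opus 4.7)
The plan is to route the path through simplices, exploiting that simplices have the smallest possible vertex count for $d$-dimensional polytopes and that Theorem~\ref{Thm.DPR.1.A} already controls the distance between any two simplices inside the subgraph induced by polytopes with $d+1$ or $d+2$ vertices.

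I would proceed in three stages. First, starting from a polytope $P$ with $n$ vertices, I would repeatedly delete vertices to reach a $d$-simplex. By Lemma~\ref{Lem.DPR.1.BB}, any $d$-dimensional polytope with strictly more than $d+1$ vertices admits at least one vertex that can be deleted, so a sequence of exactly $n-d-1$ deletion moves transforms $P$ into a simplex $S$. Symmetrically, I would apply $m-d-1$ deletion moves to the polytope $Q$ with $m$ vertices to obtain a simplex $S'$. Second, I would connect $S$ to $S'$ by invoking Theorem~\ref{Thm.DPR.1.A} with $n$ replaced by $d+1$: since simplices have $d+1$ vertices, the theorem yields a path from $S$ to $S'$ of length at most $6(d+1)-4=6d+2$ inside the subgraph induced by the polytopes with $d+1$ or $d+2$ vertices.

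Summing the three contributions gives an overall path from $P$ to $Q$ of length at most
\begin{equation*}
(n-d-1)+(6d+2)+(m-d-1)=n+m+4d\mbox{,}
\end{equation*}
as desired.

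There is essentially no obstacle here: the first and third stages consist of deletions whose validity is guaranteed by Lemma~\ref{Lem.DPR.1.BB}, and the middle stage is a direct application of a theorem already proved. The only small points to verify are that $P$ and $Q$ each have at least $d+1$ vertices (which is forced since they are $d$-dimensional) and that the bound $n-d-1\geq 0$ and $m-d-1\geq 0$ so the deletion counts are non-negative.
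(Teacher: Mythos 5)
Your proof is correct and follows exactly the paper's own argument: delete down to simplices on both sides via Lemma~\ref{Lem.DPR.1.BB}, then connect the two simplices using the $6n-4$ bound of Theorem~\ref{Thm.DPR.1.A} with $n=d+1$, yielding $(n-d-1)+(6d+2)+(m-d-1)=n+m+4d$. You have merely made explicit the arithmetic the paper leaves implicit.
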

\begin{proof}
By Lemma \ref{Lem.DPR.1.BB}, one can always transform a polytope into a simplex by performing a sequence of deletions. By Theorem \ref{Thm.DPR.1.A}, the distance of two simplices in $\Gamma(d)$ is at most $6d+2$ and the result follows.
\end{proof}


\section{The insertion move for lattice simplices}\label{Sec.DPR.2}

Connecting two polytopes within $\Lambda(d)$ turns out to be much more complicated than within $\Gamma(d)$. Recall that Lemma \ref{Lem.DPR.1.A} makes it obvious that an insertion move is always possible on a polytope when the vertices of this polytope are not constrained to belong to a lattice. Indeed, as already mentioned, one can always insert a point in the polytope, provided this point is close enough to the boundary of the polytope but far enough from its vertices. In the case of lattice polytopes, inserting a point in a neighborhood of the polytope is not always possible. 
Our strategy here is to establish, first, the connectedness of the subgraph induced in $\Lambda(d,k)$ by the simplices and the polytopes with $d+2$ vertices. This is similar to what we did in Section \ref{Sec.DPR.1} with the subgraphs induced in $\Gamma(d)$ by the polytopes with $n$ or $n+1$ vertices, except that here, $n$ has to be equal to $d+1$. 
In this section, we give results on the possibility of inserting a lattice point in a lattice simplex. In particular, we show that, for any positive $k$, there is at least one lattice point in the hypercube $[0,k]^d$ that can be inserted in a given $d$-dimensional lattice simplex contained in $[0,k]^d$.

In the remainder of the section, $S$ denotes a $d$-dimensional lattice simplex contained in the hypercube $[0,k]^d$. For any $i\in\{1, ..., d\}$, we call
$$
\gamma_i^-=\min\{x_i:x\in{S}\}\mbox{ and }\gamma_i^+=\max\{x_i:x\in{S}\}\mbox{.}
$$

Note that, for all $i\in\{1, ..., d\}$, $\gamma_i^-<\gamma_i^+$ because $S$ is $d$-dimensional. The following polytope is the smallest $d$-dimensional combinatorial hypercube containing $S$, and whose facets are parallel to the facets of $[0,k]^d$:
$$
Q=\prod_{i=1}^d[\gamma_i^-,\gamma_i^+]\mbox{.}
$$

Let $R$ be a facet of $Q$. The intersection of $R$ and $S$ is a non-empty, proper face $F$ of $S$. Since $S$ is a simplex, it admits another non-empty face $F^\star$ whose vertices are exactly the vertices of $S$ that do not belong to $F$.

By construction,
$$
\mathrm{dim}(F)+\mathrm{dim}(F^\star)=d-1\mbox{.}
$$

In particular, there exists a vector $c$ that is orthogonal to both $F$ and $F^\star$. Consider the hyperplane $Y$ of $\mathbb{R}^d$ that admits $c$ as a normal vector and such that $F^\star\subset{Y}$. The intersection $S\cap{Y}$ is precisely $F^\star$. Denote by $Y^-$ the closed half-space of $\mathbb{R}^d$ bounded by $Y$ that does not contain $F$.

Since all the vertices of $S$ are incident to either $F$ or $F^\star$, it is an immediate consequence of Lemmas \ref{Lem.DPR.1.A} and \ref{Lem.DPR.1.B} that an insertion move is possible on $S$ for any lattice point in $[0,k]^d$ that does not belong to $S$, to $\mathrm{aff}(F)$, or to $Y^-$. We are now going to search for such lattice points.

Assume, without loss of generality that $c$ is a unit vector and that it points towards $Y^-$. Recall that $R$ is a facet of $Q$ and observe that $\mathrm{aff}(R)\cap[0,k]^d$ is a $(d-1)$-dimensional cube. Denote
\begin{equation}\label{eq.A}
\delta=\min\{c\mathord{\cdot}x:x\in\mathrm{aff}(R)\cap[0,k]^d\}\mbox{.}
\end{equation}

The set
$$
G=\{x\in\mathrm{aff}(R)\cap[0,k]^d:c\mathord{\cdot}x=\delta\}
$$
is a face of $\mathrm{aff}(R)\cap[0,k]^d$. It follows that $G$ is a cube of dimension at most $d-1$. Recall that $c$ is orthogonal to both $F$ and $F^\star$. As a consequence, the map $x\mapsto{c\mathord{\cdot}x}$ is constant within $F$ and within $F^\star$. Call $\varepsilon$ the value of $c\mathord{\cdot}x$ when $x\in{F}$ and $\varepsilon^\star$ the value of $c\mathord{\cdot}x$ when $x\in{F^\star}$. Since $F$ and $Y^-$ are disjoint, $\varepsilon<\varepsilon^\star$. Moreover, by (\ref{eq.A}), $\delta\leq\varepsilon$. Observe that the latter inequality is strict if and only if $F$ is not a subset of $G$. In this case, $F$, $G$, and $Y$ belong to distinct parallel hyperplanes and we immediately obtain the following.
\begin{lem}\label{Lem.BC}
If $F\not\subset{G}$ then $G$ is disjoint from both $\mathrm{aff}(F)$ and $Y^-$.
\end{lem}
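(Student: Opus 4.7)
The plan is to read the conclusion off directly from the fact that $F$, $G$, and $Y$ sit on three parallel hyperplanes orthogonal to $c$, whose $c$-coordinates form a strict chain under the hypothesis $F\not\subset G$. So the proof is essentially a bookkeeping consequence of the quantities $\delta$, $\varepsilon$, $\varepsilon^\star$ introduced just before the statement.

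First I would record that, since $c$ is orthogonal to $F$ as an affine set (not merely as a face), the linear functional $x\mapsto c\mathord{\cdot}x$ is identically equal to $\varepsilon$ on all of $\mathrm{aff}(F)$; by construction $G\subset\{x:c\mathord{\cdot}x=\delta\}$ and $Y=\{x:c\mathord{\cdot}x=\varepsilon^\star\}$. The paragraph preceding the lemma already observes that $\delta\leq\varepsilon$, with equality if and only if $F\subset G$, and that $\varepsilon<\varepsilon^\star$ because $F$ is disjoint from $Y^-$. Under the hypothesis $F\not\subset G$, we therefore have a strict chain $\delta<\varepsilon<\varepsilon^\star$, so the three affine sets above lie on pairwise distinct parallel hyperplanes.

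Disjointness of $G$ from $\mathrm{aff}(F)$ is then immediate, since both sets are contained in parallel hyperplanes at distinct $c$-levels. For disjointness from $Y^-$, I would note that because $c$ was chosen to point towards $Y^-$, we have $Y^-=\{x:c\mathord{\cdot}x\geq\varepsilon^\star\}$, whereas every point of $G$ satisfies $c\mathord{\cdot}x=\delta<\varepsilon^\star$; thus $G\cap Y^-=\emptyset$. I do not anticipate any real obstacle: the only point that warrants a moment of care is the equivalence $\delta=\varepsilon\Leftrightarrow F\subset G$, but this is just the statement that the minimum of $c\mathord{\cdot}x$ over $\mathrm{aff}(R)\cap[0,k]^d$ is attained on $F$ exactly when $F$ already lies in the face $G$ of that cube. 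The whole proof should fit in a few lines.
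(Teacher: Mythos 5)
Your argument is correct and is exactly the paper's: the text immediately preceding the lemma notes that $F\not\subset G$ forces $\delta<\varepsilon<\varepsilon^\star$, so that $G$, $\mathrm{aff}(F)$, and $Y$ lie in distinct parallel hyperplanes orthogonal to $c$, and the paper then states the lemma as an immediate consequence. Your write-up merely makes explicit the levels $c\mathord{\cdot}x=\delta$, $\varepsilon$, $\varepsilon^\star$ and the description $Y^-=\{x:c\mathord{\cdot}x\geq\varepsilon^\star\}$, which is exactly the intended bookkeeping.
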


In other words, any lattice point in $G$ can be insterted in $S$ in this case. If, on the contrary, $\delta$ and $\varepsilon$ coincide, then $F\subset{G}$. This situation is familiar: we are looking at a lattice simplex $F$ contained in a (possibly degenerate) lattice hypercube $G$. If the dimension of $G$ is greater than the dimension of $F$, then the following lemma provides the desired result.

\begin{lem}\label{Lem.C}
If $k$ is positive and if $P$ is a lattice polytope of dimension less than $d$ contained in $[0,k]^d$, then there exists a lattice point $x$ that belongs to $[0,k]^d$ but that does not belong to the affine hull of $P$.
\end{lem}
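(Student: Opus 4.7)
The plan is to exhibit explicitly $d+1$ lattice points of $[0,k]^d$ that are affinely independent, and then to observe that the affine hull of $P$, being of dimension at most $d-1$, cannot contain all of them.

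More concretely, since $k$ is a positive integer, the points $0$ and $k e_1, k e_2, \ldots, k e_d$ all belong to $[0,k]^d\cap\mathbb{Z}^d$, where $e_1,\ldots,e_d$ denotes the canonical basis of $\mathbb{R}^d$. The vectors $k e_i - 0 = k e_i$ are linearly independent, so this family of $d+1$ lattice points is affinely independent.

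Now $\mathrm{aff}(P)$ has dimension at most $d-1$ by the hypothesis on $P$, hence it can contain at most $d$ affinely independent points. Therefore at least one point among $0, k e_1, \ldots, k e_d$ does not lie in $\mathrm{aff}(P)$, and this point is the desired lattice point of $[0,k]^d$. There is no real obstacle here; the statement is essentially the observation that a proper affine subspace of $\mathbb{R}^d$ cannot capture an affinely spanning set of the ambient space, and the explicit list above shows that $[0,k]^d$ contains such a spanning set as soon as $k\geq 1$.
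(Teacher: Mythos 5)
Your proof is correct, and it takes a genuinely different route from the paper's. You argue by affine spanning: the $d+1$ lattice points $0, ke_1,\ldots,ke_d$ of $[0,k]^d$ are affinely independent, and an affine subspace of dimension at most $d-1$ can contain at most $d$ affinely independent points, so one of them must escape $\mathrm{aff}(P)$. The paper instead argues by counting: it shows that $\mathrm{aff}(P)\cap[0,k]^d$ contains at most $(k+1)^{d-1}$ lattice points (via an orthogonal projection onto a facet of the cube that is injective on lattice points and preserves the dimension of the intersection), while $[0,k]^d$ contains $(k+1)^d>(k+1)^{d-1}$ of them. Your argument is the more elementary of the two, since it avoids the projection step and the injectivity claim entirely; the paper's counting argument yields the stronger (though unused) quantitative conclusion that at least $(k+1)^d-(k+1)^{d-1}$ lattice points of the cube lie off the affine hull of $P$. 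Both proofs are valid, and both adapt without change to the lower-dimensional cubes to which the lemma is later applied.
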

\begin{proof}
If $P$ is a lattice polytope of dimension less than $d$ contained in $[0,k]^d$, then the intersection $I$ of its affine hull with $[0,k]^d$ cannot contain more than $(k+1)^{d-1}$ lattice points. Indeed, one can always project $I$ orthogonally on a facet of $[0,k]^d$ in such a way that the dimension of the projection is exactly that of $I$. Such a projection induces an injection from the lattice points in $I$ into the lattice points in the facet of $[0,k]^d$ on which the projection is made.

Now observe that $[0,k]^d$ contains $(k+1)^d$ lattice points. Since $k$ is positive, $(k+1)^{d-1}<(k+1)^d$ and the lemma is proven.
\end{proof}

We now have to address the case when, regardless of which facet $R$ of $Q$ is chosen, $F$ is a subset of $G$ and these polytopes have the same dimension. This case is dealt with by the following lemma.

\begin{lem}\label{Lem.EA}
Call $g$ the maximal dimension of $F$ over all the possible choices for $R$ among the facets of $Q$. Assume that, for any choice of $R$ among the facets of $Q$ such that $F$ has dimension $g$, $F$ is a subset of $G$ and the dimensions of $F$ and $G$ coincide. If $g$ is not greater than $d-2$, then for some choice of $R$ such that $F$ has dimension $g$, there exists a lattice point in $R\mathord{\setminus}[\mathrm{aff}(F)\cup{Y^-}]$.
\end{lem}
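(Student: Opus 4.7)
The plan is to first reformulate the condition using the identification $\mathrm{aff}(F)=\mathrm{aff}(G)$ and then to exhibit explicit candidate lattice points as projections of vertices of $F^\star$ onto the facet $R$.

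First, since $F\subset G$ and $\dim F=\dim G=g$, the affine hulls coincide. Combined with the fact that $G$ is the minimizer of $c\mathord{\cdot}x$ on the $(d-1)$-dimensional cube $\mathrm{aff}(R)\cap[0,k]^d$ with minimum value $\delta=\varepsilon$, one deduces that $\mathrm{aff}(F)\cap R$ coincides with $\{x\in R:c\mathord{\cdot}x=\varepsilon\}$ and that $c\mathord{\cdot}x\geq\varepsilon$ throughout $R$. Hence the target set $R\mathord{\setminus}[\mathrm{aff}(F)\cup Y^-]$ becomes the open slab $\{x\in R:\varepsilon<c\mathord{\cdot}x<\varepsilon^\star\}$, and the lemma is equivalent to finding a lattice point in this slab for some max facet $R$.

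The natural candidate is obtained by projecting a vertex $v$ of $F^\star$ onto $\mathrm{aff}(R)$: replace the $i$-th coordinate of $v$ by the value $\gamma_i^\pm$ defining $R$. The resulting point $p$ is a lattice point of $R$ (its other coordinates, inherited from $v\in S\subset Q$, lie in $[\gamma_j^-,\gamma_j^+]$), and $p\notin\mathrm{aff}(F)$ because $c\mathord{\cdot}v=\varepsilon^\star\neq\varepsilon$. A direct computation gives $c\mathord{\cdot}p=\varepsilon^\star-c_i(v_i-\gamma_i^\pm)$. For a min facet $R=R_i^-$, the point $p$ lies in the slab precisely when $0<c_i(v_i-\gamma_i^-)<\varepsilon^\star-\varepsilon$, with a symmetric condition for max facets. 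So whenever $c_i>0$ and $v$ is chosen with the smallest possible $v_i$, one succeeds.

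The remaining step, which I expect to be the main obstacle, is to argue that among the collection of max facets and vertices of their corresponding $F^\star$, at least one pair $(R,v)$ yields a $p$ in the slab. Here the hypothesis $g\leq d-2$ enters crucially: it guarantees that $G$ has at least one fixed coordinate $j_1\neq i$, so $c$ is supported only on $\{e_i,e_{j_1},\ldots,e_{j_{d-1-g}}\}$ and its $g$ components along the free directions of $G$ all vanish. If the direct projection failed for every max facet (because of "wrong" signs of $c_i$), the rigidity imposed by $F\subset G$ with $\dim F=\dim G$ for \emph{every} such $R$ would produce a linear system on the various $c$'s that, together with $\|c\|=1$ and the full-dimensionality of $S$, I would aim to contradict. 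If needed, I would further enlarge the candidate pool by taking a single unit lattice step from $p$ along one of the fixed directions of $G$, which stays in $R$ (the corresponding coordinate moves off $0$ or $k$ into the non-trivial range of $R$) and whose effect on $c\mathord{\cdot}x$ is controlled by $|c_{j_l}|\leq 1$; verifying that at least one such refined candidate lands strictly inside the slab is the technical heart of the proof.
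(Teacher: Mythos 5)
Your reformulation and your candidate point are both the right ones, and they coincide with the paper's: on the cube $\mathrm{aff}(R)\cap[0,k]^d$ one has $c\mathord{\cdot}x\geq\delta=\varepsilon$ with equality exactly on $G=\mathrm{aff}(F)\cap\mathrm{aff}(R)\cap[0,k]^d$, so the target set is indeed your slab, and the lattice point the paper exhibits is exactly your $p$, the orthogonal projection of a vertex $v$ of $F^\star$ onto $\mathrm{aff}(R)$. However, the two steps you defer are the entire content of the lemma, and neither of the routes you sketch for them is a proof. (Also, the claim that $p\notin\mathrm{aff}(F)$ ``because $c\mathord{\cdot}v=\varepsilon^\star\neq\varepsilon$'' is a non sequitur: the relevant quantity is $c\mathord{\cdot}p=\varepsilon^\star-c_i(v_i-\gamma_i^-)$, which could a priori equal $\varepsilon$. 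You implicitly retract this when you impose $c_i(v_i-\gamma_i^-)<\varepsilon^\star-\varepsilon$, but you never establish that inequality: choosing $v$ with smallest $v_i$ only minimizes the left-hand side and gives no comparison with $\varepsilon^\star-\varepsilon$.)

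The two missing ideas are the following. For the sign question, there is no ``linear system on the various $c$'s'' to contradict: by the maximality of $g$, every facet of $Q$ incident to $G$ meets $S$ in exactly $F$, so all of these facets determine the \emph{same} $F$, $F^\star$, $Y$ and the same $c$. After normalizing by the symmetries of $[0,k]^d$ so that every facet of $Q$ containing $G$ is a lower facet $\{x\in Q:x_l=\gamma_l^-\}$, the fact that $G$ is the $c$-minimizing face forces $c_j\geq0$ for every $j\neq i$; since $g\leq d-2$ guarantees a second facet of $Q$ incident to $G$, indexed by some $j\neq i$, the same argument applied to that facet gives $c_i\geq0$ as well. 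Hence all coordinates of $c$ are non-negative and, as $c\neq0$, one may choose $R$ incident to $G$ with $c_i>0$, which yields $c\mathord{\cdot}p<\varepsilon^\star$. For the lower bound, the argument is combinatorial rather than numerical: $c\mathord{\cdot}p>\varepsilon$ is equivalent to $p\notin G$, and since $v$ is a vertex of $F^\star$ it lies on none of the facets of $Q$ incident to $G$ (again because each meets $S$ exactly in $F$); in particular $v_j>\gamma_j^-$ for the second facet above, and projecting onto $R$ leaves the $j$-th coordinate unchanged, so $p_j>\gamma_j^-$ and $p\notin G$. This is where $g\leq d-2$ enters twice, and it makes your fallback perturbation unnecessary (which is fortunate, since that perturbation also lacks any lower bound on $\varepsilon^\star-\varepsilon$).
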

\begin{proof}
Consider a facet $R$ of $Q$ such that $F$ has dimension exactly $g$. As $G$ admits $F$ as a subset, $G$ must be a face of $Q$. Taking advantage of the symmetries of $[0,k]^d$, we assume that any facet of $Q$ that contains $G$ is of the form
$$
\{x\in{Q}:x_i=\gamma_i^-\}
$$
for some $i\in\{1, ..., d\}$. In this case, all the coordinates of the vector $c$ are non-negative, except maybe for the coordinate $c_i$ such that all the points $x$ in $R$ satisfy $x_i=\gamma_i^-$. By the maximality of $g$, the intersection of $S$ with any facet of $Q$ incident to $G$ is precisely $F$. In other words, $G$, $F$, $F^\star$, $Y$, and $c$ do not depend on which facet $R$ of $Q$ is chosen, provided this facet is incident to $G$. As a consequence, all the coordinates of $c$ are non-negative.

We will assume that $c_1$ is positive and that
$$
R=\{x\in{Q}:x_1=\gamma_1^-\}\mbox{.}
$$

This can be done without loss of generality by, if needed, permuting the coordinates of $\mathbb{R}^d$. Recall that $F^\star$ is non-empty and consider a vertex $v$ of $F^\star$. By the definition of $\varepsilon^\star$, we have the equality
$$
\sum_{i=1}^dc_iv_i=\varepsilon^\star\mbox{.}
$$

This equality can be transformed into
$$
c_1\gamma_1^-+\sum_{i=2}^dc_iv_i=\varepsilon^\star-c_1(v_1-\gamma_1^-)\mbox{.}
$$

In other words, the orthogonal projection $w$ of $v$ on $R$ (whose coordinates coincide with the coordinates of $v$, except for the first coordinate that is equal to $\gamma_1^-$ instead of $v_1$) satisfies $c\mathord{\cdot}w=\varepsilon^\star-c_1(v_1-\gamma_1^-)$. As $c_1$ is non-zero and as $v_1>\gamma_1^-$, we obtain $c\mathord{\cdot}w<\varepsilon^\star$. It immediately follows that $w\not\in{Y^-}$. Now assume that $g$ is at most $d-2$. In this case, $G$ cannot be a facet of $Q$ and it is incident to at least one facet of $Q$ distinct from $R$. Since $v$ does not belong to any of the facets of $Q$ that contain $G$, its orthogonal projection $w$ on $\mathrm{aff}(R)$ cannot belong to $G$. As a consequence $w$ does not belong to the affine hull of $F$. By construction, $w$ is a lattice point, and the lemma is proven.
\end{proof}

We now state a theorem, obtained by combining Lemmas \ref{Lem.BC}, \ref{Lem.C}, and \ref{Lem.EA}, that will be used in the next section to prove the connectedness of $\Lambda(d,k)$.

\begin{thm}\label{thm.add}
Call $g$ the maximal dimension of $F$ over all the possible choices for $R$ among the facets of $Q$. If $g$ is not greater than $d-2$, then one can choose $R$ among the facets of $Q$ in such a way that $F$ has dimension $g$ and there exists a lattice point in $R\mathord{\setminus}\mathrm{aff}(F)$ that can be inserted in $S$.
\end{thm}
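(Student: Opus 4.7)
The statement is essentially a repackaging of the three preceding lemmas into a single conclusion, so my plan is a three-way case analysis on the facets of $Q$ at which $F$ achieves dimension $g$, invoking exactly one of the lemmas in each branch. Throughout I would rely on the insertion criterion established just before the definition of $\delta$: a lattice point of $[0,k]^d$ can be inserted in $S$ precisely when it is not in $S$, not in $\mathrm{aff}(F)$ and not in $Y^-$. Let $\mathcal{R}$ denote the (nonempty) set of facets $R$ of $Q$ at which $\dim F=g$.

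First, if some $R\in\mathcal{R}$ satisfies $F\not\subset G$, I would invoke Lemma~\ref{Lem.BC}: $G$ is a nonempty face of the lattice cube $\mathrm{aff}(R)\cap[0,k]^d$, hence has lattice vertices, and these lie outside $\mathrm{aff}(F)$ and $Y^-$ by the lemma. They also lie outside $S$, because $G\cap S\subseteq\mathrm{aff}(R)\cap S=F$, and $F\not\subset G$ means that $F$ and $G$ sit in distinct parallel hyperplanes of $\mathrm{aff}(R)$, so $G\cap F=\emptyset$. Any such lattice vertex is then a valid insertion point.

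Otherwise, $F\subset G$ for every $R\in\mathcal{R}$, and I would split on whether equality of dimensions holds. If some $R\in\mathcal{R}$ has $\dim F<\dim G$, then $F$ is a lattice polytope of strictly lower dimension sitting in the lattice cube $G$, and the orthogonal projection/counting argument of Lemma~\ref{Lem.C}, transplanted inside $G$, produces a lattice point of $G$ outside $\mathrm{aff}(F)$. That point is automatically outside $Y^-$ (since $G\subset\{c\cdot x=\varepsilon\}$ and $\varepsilon<\varepsilon^\star$) and outside $S$ (since $G\cap S\subseteq F\subset\mathrm{aff}(F)$). If instead $\dim F=\dim G$ holds for every $R\in\mathcal{R}$, the hypothesis of Lemma~\ref{Lem.EA} is met and that lemma directly furnishes $R\in\mathcal{R}$ together with a lattice point of $R\setminus(\mathrm{aff}(F)\cup Y^-)$, which is then also outside $S$ since $S\cap R=F\subset\mathrm{aff}(F)$.

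The hard work has been pushed into the three preceding lemmas, so the proof itself is just a bookkeeping case analysis; the one point requiring care is to verify that the lattice points produced in the first two branches can be located in $R$ itself (and not merely in $\mathrm{aff}(R)\cap[0,k]^d$ or in the face $G$), which is where the hypothesis $g\leq d-2$ enters beyond its use in Lemma~\ref{Lem.EA}, since it guarantees enough room for a suitable lattice vertex of $G$ to sit inside $R$.
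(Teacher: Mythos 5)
Your proof takes essentially the same route as the paper: the identical three-way case split over the facets $R$ at which $\dim F=g$, invoking Lemma~\ref{Lem.BC}, Lemma~\ref{Lem.C} and Lemma~\ref{Lem.EA} in the respective branches, together with the same insertion criterion (avoid $S$, $\mathrm{aff}(F)$ and $Y^-$). The one caveat is your closing assertion that $g\le d-2$ forces a suitable lattice point of $G$ to lie in $R$ itself rather than merely in $\mathrm{aff}(R)\cap[0,k]^d$ --- you give no argument for this, but the paper's own proof is equally silent on that point, so it does not distinguish your argument from theirs.
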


\begin{proof}
Assume that $g\leq{d-2}$. If one can choose $R$ among the facets of $Q$ in such a way that $F$ is $g$-dimensional and $F\not\subset{G}$, then we consider any such facet for $R$ and pick, for $x$, any lattice point in $R$. By Lemma \ref{Lem.BC}, $x$ cannot belong to $\mathrm{aff}(F)$ or to $Y^-$ and, by Lemmas~\ref{Lem.DPR.1.A} and \ref{Lem.DPR.1.B}, it can be inserted in $S$.

Now assume that for any choice of $R$ among the facets of $Q$ such that $F$ has dimension $g$, $F\subset{G}$ but that for some such choice of $R$, the dimension of $F$ is less than the dimension of $G$. In this case, by Lemma \ref{Lem.C}, there exists a lattice point $x$ in $G$ that does not belong to $\mathrm{aff}(F)$. As in addition, $Y^-$ is disjoint from $G$, it follows from Lemmas~\ref{Lem.DPR.1.A} and \ref{Lem.DPR.1.B} that $x$ can be inserted in $S$.

Finally, assume that for any choice of $R$ among the facets of $Q$ such that $F$ has dimension $g$, $F$ is a subset of $G$ and the dimensions of $F$ and $G$ coincide. By Lemma \ref{Lem.EA}, one can choose $R$ such that $F$ has dimension $g$ and there exists a lattice point in $R$ that does not belong to $\mathrm{aff}(F)$ or to $Y^-$. In this case, by Lemmas \ref{Lem.DPR.1.A} and \ref{Lem.DPR.1.B}, $x$ can be inserted in $S$.
\end{proof}

The following corollary shows that there is at least one lattice point in $[0,k]^d$ that can be inserted in $S$. The argument in this proof will be used again in the next section, in order to prove that $\Lambda(d,k)$ is always connected.

\begin{cor}\label{cor.add}
For any positive $k$, an insertion move is possible on $S$ for at least one lattice point contained in the hypercube $[0,k]^d$.
\end{cor}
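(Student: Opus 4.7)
The plan is to split into cases based on the maximum $g$ of $\dim F$ as $R$ ranges over the facets of $Q$. Since each $R$ is $(d-1)$-dimensional, one always has $g \leq d-1$. The case $g \leq d-2$ is handled immediately by Theorem \ref{thm.add}, which even delivers an insertable lattice point lying in a facet $R \subseteq Q \subseteq [0,k]^d$ of $Q$.

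The interesting case is $g = d-1$, precisely the regime that Theorem \ref{thm.add} excludes. Here some facet $R_0$ of $Q$ meets $S$ in a $(d-1)$-dimensional face, hence in a facet of $S$. Permuting coordinates and possibly swapping $\gamma_1^-$ with $\gamma_1^+$, I may assume $R_0 = \{x \in Q : x_1 = \gamma_1^-\}$. The unique vertex $v$ of $S$ that is not on $R_0$ must then satisfy $v_1 = \gamma_1^+$, because every other vertex of $S$ has first coordinate equal to $\gamma_1^-$.

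The key idea is to invoke the section's construction not with $R_0$ but with the opposite facet $R_1 = \{x \in Q : x_1 = \gamma_1^+\}$ of $Q$. This flips the roles of $F$ and $F^\star$: one now has $F = R_1 \cap S = \{v\}$, a single vertex, while $F^\star$ is the facet of $S$ lying on $R_0$. The normal vector becomes $c = -e_1$, so $Y = \{x_1 = \gamma_1^-\}$ and $Y^- = \{x_1 \leq \gamma_1^-\}$. Any lattice point $x \in R_1$ with $x \neq v$ automatically satisfies the three sufficient conditions recorded just before Lemma \ref{Lem.BC}: it lies outside $S$ (since $R_1 \cap S = \{v\}$), outside $\mathrm{aff}(F) = \{v\}$, and outside $Y^-$ (since $x_1 = \gamma_1^+ > \gamma_1^-$), and hence can be inserted in $S$.

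Existence of such a lattice point is straightforward: $R_1 = \{\gamma_1^+\} \times \prod_{i=2}^d [\gamma_i^-, \gamma_i^+]$ is a lattice box, and $d$-dimensionality of $S$ forces $\gamma_i^+ > \gamma_i^-$ for every $i$, so $R_1$ contains at least $2^{d-1} \geq 2$ lattice points, at least one distinct from $v$. The only subtle step in the argument is the realization that when $g = d-1$ the framework must be instantiated using the facet of $Q$ opposite to the one witnessing $g = d-1$, so that the relevant face $F$ collapses to a single vertex and the obstructions $\mathrm{aff}(F)$ and $Y^-$ become easy to avoid.
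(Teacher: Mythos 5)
Your proof is correct and is essentially the paper's argument: in the case $g=d-1$ the paper keeps the facet $R_0$ witnessing $g=d-1$ as its $R$, so that $F^\star=\{v\}$, and inserts any lattice point other than $v$ in $Y\cap[0,k]^d$, where $Y$ is the hyperplane through $v$ parallel to $R_0$ --- which is exactly the affine hull of your opposite facet $R_1$. Your reframing via $R_1$ (so that $F$ rather than $F^\star$ collapses to $\{v\}$) changes only the bookkeeping, not the substance, and your existence count of lattice points in $R_1$ is sound.
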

\begin{proof}
If, for any possible choice of $R$ among the facets of $Q$, the dimension of $F$ is at most $d-2$, then the result follows from Theorem \ref{thm.add}. Assume that, for some facet $R$ of $Q$, $F$ has dimension $d-1$. In this case, $Y$ is parallel to $R$ and $F^\star$ is made up of a single vertex, say $v$. By Lemma \ref{Lem.DPR.1.B}, the intersection of $C_v$ with $Y$ is precisely $v$ and, for every vertex $u$ of $F$, $C_u(S)$ is disjoint from $Y$. Hence, by Lemma \ref{Lem.DPR.1.A}, any lattice point distinct from $v$ in $Y\cap[0,k]^d$ can be inserted in $S$. As $k\geq1$, there exists at least one such lattice point.
\end{proof}

\section{The connectedness of $\Lambda(d)$ and $\Lambda(d,k)$}\label{Sec.DPR.3}

We first prove in this section that $\Lambda(2,k)$ is a connected graph. This will serve as the base case for the inductive proof that $\Lambda(d,k)$ is connected. In the whole section, we call \emph{corner simplex} of $[0,k]^d$ the simplex whose vertices are the origin (the lattice point whose all coordinates are zero), and the $d$ lattice points in $[0,k]^d$ distant from the origin by exactly $1$.

\begin{lem}\label{thm.Connec2}
For any positive $k$, the subgraph induced in $\Lambda(2,k)$ by the triangles and the quadrilaterals is connected.
\end{lem}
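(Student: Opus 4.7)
The plan is to show every triangle $T\subseteq[0,k]^2$ lies in the same connected component as the canonical corner simplex $T_0=\mathrm{conv}\{(0,0),(1,0),(0,1)\}$ of the subgraph induced by triangles and quadrilaterals. Since every lattice quadrilateral in $[0,k]^2$ is adjacent to a triangle by Lemma~\ref{Lem.DPR.1.BB} (which ensures at least one deletable vertex), connecting every triangle to $T_0$ suffices.

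I would argue by induction on a size invariant of $T$, for instance the lexicographic pair formed by the area of $T$ and the area of its axis-aligned bounding box $Q_T$. The base case handles unimodular corner simplices, which fit inside a single unit lattice square: inside one such unit square, each of the four corner triangles is adjacent to the unit square itself by inserting the missing corner, so these five polygons form a star subgraph. A short translation argument, sliding a unit triangle across adjacent unit squares by a two-move insert-then-delete pair, then connects any unimodular corner simplex of $[0,k]^2$ to $T_0$.

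For the inductive step, I would apply Corollary~\ref{cor.add} to produce a lattice point $x\in[0,k]^2$ insertable in $T$, obtaining a quadrilateral $Q=\mathrm{conv}(T\cup\{x\})$. The constructive proof of that corollary places $x$ in a position dictated by $Q_T$: either on a facet of $Q_T$ not already covered by an edge of $T$, or on the line parallel to a bounding-box-aligned edge of $T$ through the opposite vertex. I would then delete from $Q$ a vertex $v$ of $T$ extremal in the coordinate direction opposite to $x$, so that the resulting triangle $T'=(T\setminus\{v\})\cup\{x\}$ has strictly smaller size invariant than $T$, and conclude by the induction hypothesis.

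The main obstacle is ensuring such a size-decreasing deletion is always legitimately available. The delicate cases are those in which the natural choice of $v$ leaves the three surviving vertices collinear, or where the single insertable lattice point provided by Corollary~\ref{cor.add} sits on an extremal line of $T$ already occupied by a vertex, so that no strict decrease is achieved. Here one exploits the flexibility in Theorem~\ref{thm.add}, which typically offers several candidate insertable lattice points on the chosen facet of $Q_T$, or performs a short two-move detour through a neighbouring triangle to break the degeneracy. I expect the most involved part of the argument to be a careful case split according to whether each facet of $Q_T$ meets $T$ in an edge, in a single vertex, or only trivially.
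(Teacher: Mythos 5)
Your overall strategy (reduce every triangle to the corner simplex) matches the paper's, but you replace the paper's explicit normalization by an induction on a size invariant, and it is precisely the inductive step that is left unproven. You state yourself that ``the main obstacle is ensuring such a size-decreasing deletion is always legitimately available'' and then defer the resolution to an unspecified ``careful case split''; that case split \emph{is} the proof, and there is real reason to doubt that the invariant you chose decreases under the moves you have available. Concretely: when the triangle $T$ already has an edge on a side of its bounding box $Q_T$ (the case $g=d-1$ of Corollary~\ref{cor.add}), the insertable lattice points supplied by that corollary lie on the line through the opposite vertex parallel to that side, and such a point can lie far outside $Q_T$; after the subsequent deletion the bounding box of $T'$ may be strictly larger, and the area of $T'$ need not have dropped either, so the lexicographic pair $(\mathrm{area}, \mathrm{area\ of\ }Q_T)$ can increase. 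When no edge of $T$ is axis-parallel, Theorem~\ref{thm.add} puts $x$ on a facet of $Q_T$, but the vertex you must then delete to avoid collinearity is forced, and nothing in your sketch shows that this forced choice shrinks either component of the invariant. Until you exhibit, in every configuration, a specific insertable point and a specific deletable vertex for which the invariant strictly decreases, the argument does not close. (The base case also needs a word of care: unimodular lattice triangles need not fit in a unit square, e.g.\ $\mathrm{conv}\{(0,0),(1,0),(k,1)\}$, so your induction must in particular drive the bounding box of an area-$\tfrac12$ triangle down to a unit square, which is again the unproven step.)

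For comparison, the paper sidesteps any monovariant: it uses Theorem~\ref{thm.add} once to force an axis-parallel edge, then a second insertion on the line through the opposite vertex to force a horizontal \emph{and} a vertical edge, then one more insert-delete pair to orient the right-angled corner toward the origin, and finally swaps in the three vertices of the corner triangle one at a time (all later points being in convex position, in the spirit of Lemma~\ref{Lem.DPR.1.BC}). Each stage is a bounded, explicitly verified sequence of moves, which is why no decreasing quantity is needed. Your approach could in principle work and would be somewhat more uniform, but as written it is a blueprint with the central lemma missing.
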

\begin{proof}
Since each vertex of a quadrilateral can be deleted, we only need to show that any two triangles are in the same connected component of the subgraph of $\Lambda(2,k)$ induced by triangles and quadrilaterals. Consider a lattice triangle contained in the square $[0,k]^2$. If this triangle does not have a horizontal or a vertical edge then, by Theorem \ref{thm.add}, an insertion move can be performed to transform it into a quadrilateral with a horizontal or a vertical edge, say $e$. It is then possible to delete one of the vertices of  this quadrilateral that is not incident to $e$ in order to obtain a triangle $T$ that admits $e$ as an edge. The strategy is then to transform $T$ into the corner triangle of $[0,k]^2$ using the sequence of moves sketched in Fig. \ref{fig:connect}. This figure shows the case when $e$ is the horizontal edge on the bottom of $T$. In each portion of the figure, the next point for which a move will be performed is colored green or red depending on whether the move is an insertion or a deletion. First observe that a lattice point in the line parallel to $e$ that contains the vertex of $T$ opposite $e$ can be inserted in order to obtain a quadrilateral with three horizontal or vertical edges as shown in the first two portions of Fig. \ref{fig:connect}.
\begin{figure}
\begin{center}
\includegraphics{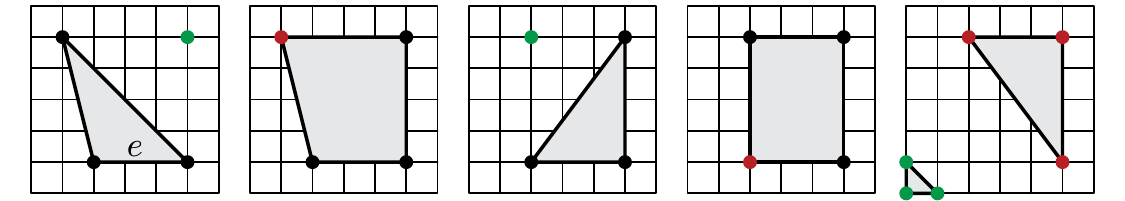}
\caption{An illustration of the sequence of deletion and insertion moves from the lattice triangle shown on the left to the corner triangle with green vertices, on the right.}
\label{fig:connect}
\end{center}
\end{figure}
A deletion move for one of the vertices of the quadrilateral then results in a triangle with a horizontal and a vertical edge as shown in the center of Fig. \ref{fig:connect}. After that, the triangle has a unique oblique edge that faces one of the four vertices of the square $[0,k]^2$. It is always possible to make this edge face the vertex on the bottom left of the square by performing an insertion move to obtain a rectangle and then deleting the bottom-left vertex of the rectangle. This sequence of moves is illustrated in the third and fourth portions of Fig. \ref{fig:connect} in the case when the oblique edge initially faces the top-left vertex of $[0,k]^2$. Finally, one can transform the resulting triangle $U$ into the corner triangle of $[0,k]^2$ (whose vertices are colored green on the right of Fig.~\ref{fig:connect}) by inserting the vertices of the corner simplex one by one, and by deleting a vertex of $U$ after each insertion. Here, one just needs to take care to insert the origin of $\mathbb{R}^2$ first, and to delete the top-right vertex of $U$ last, in the case when $U$ has one or two of its vertices with a zero coordinate.
\end{proof}

We are now ready to prove Theorem \ref{thm.main.2} that can be thought of as the main result of the article. According to it, the subgraph induced in $\Lambda(d,k)$ by simplices and polytopes with $d+2$ vertices is connected.

\begin{proof}[Proof of Theorem \ref{thm.main.2}]
The proof proceeds by induction on $d$. The base case is provided by Lemma \ref{thm.Connec2}. According to Lemma \ref{Lem.DPR.1.BB}, one can always transform a $d$-dimensional polytope with $d+2$ vertices into a lattice simplex by a deletion move. Therefore, we only need to prove that two simplices always are in the same connected component of the subgraph induced in $\Lambda(d,k)$ by simplices and polytopes with $d+2$ vertices. The strategy will be, again, to transform any simplex in this graph into the corner simplex of $[0,k]^d$. Assume that $d\geq3$. Consider a lattice simplex $S$ contained in $[0,k]^d$, and call $\mathcal{V}$ the vertex set of $S$.

As in the previous section, for any $i\in\{1, ..., d\}$, we call
$$
\gamma_i^-=\min\{x_i:x\in{S}\}\mbox{ and }\gamma_i^+=\max\{x_i:x\in{S}\}\mbox{,}
$$
and we consider the combinatorial cube
$$
Q=\prod_{i=1}^d[\gamma_i^-,\gamma_i^+]\mbox{.}
$$

Call $g$ the maximal dimension of the intersection of $S$ and a facet of $Q$. If $g$ is at most $d-2$ then, by Theorem \ref{thm.add}, there exists a facet $R$ of $Q$ such that $S\cap{R}$ is $g$-dimensional and a lattice point $x\in{R\mathord{\setminus}\mathrm{aff}(S\cap{R})}$ that can be inserted in $S$. Consider the polytope $P$ obtained by inserting $x$ in $S$. The intersection $P\cap{R}$ is a simplex because $x\not\in\mathrm{aff}(S\cap{R})$. As a consequence, $P\cap{R}$ is a face of at least one $d$-dimensional simplex that can be obtained by deleting a vertex from $P$. The intersection of this simplex with $R$ is equal to $P\cap{R}$ and therefore has dimension $g+1$. Repeating this procedure provides a sequence of insertion and deletion moves that transform $S$ into a lattice simplex whose intersection $F$ with a facet $R$ of $Q$ has dimension $d-1$

Call $v$ the unique vertex of the lattice simplex that does not belong to $R$. Observe that, in this case, any sequence of insertion and deletion moves that can be performed on $F$ within the cube $\mathrm{aff}(R)\cap[0,k]^d$ can also be performed within $[0,k]^d$ for the pyramid with apex $v$ over $F$. By induction, one can transform $F$ into any lattice simplex contained in the intersection $\mathrm{aff}(R)\cap[0,k]^d$ by carrying out an alternating sequence of insertion and deletion moves in this intersection. This sequence of moves can therefore be performed in order to transform $S$ into the $d$-dimensional lattice simplex $S'$ whose vertex set is made up of $v$, of the lattice point $w$ in $\mathrm{aff}(R)\cap[0,k]^d$ with a unique non-zero coordinate, and of the $d-1$ lattice points in $\mathrm{aff}(R)\cap[0,k]^d$ distant by exactly $1$ from $w$.

Now observe that one can perform an insertion move on any lattice point distinct from $v$ in the intersection of $[0,k]^d$ with the hyperplane parallel to $R$ that contains $v$. We proceed by inserting the lattice point in this intersection whose orthogonal projection on $R$ is $w$ and then, by deleting $v$. Calling $v'$ any of the $d-1$ lattice points in $\mathrm{aff}(R)\cap[0,k]^d$ distant from $w$ by exactly $1$, the simplex that results from the latter deletion is a pyramid with apex $v'$ over a $(d-1)$-dimensional simplex $F'$ such that, for some $i\in\{1, ..., d\}$, $v'$ satisfies $v'_i=1$ and every vertex $u$ of $F'$ satisfies $u_i=0$. Call $R'$ the facet of $[0,k]^d$ made up of the points $x$ such that $x_i=0$. By induction, one can transform $F'$ within $R'$ into the corner simplex of $R'$. From there, one can perform an insertion move on any lattice vertex distinct from $v'$ in the intersection of $[0,k]^d$ with the hyperplane parallel to $R'$ that contains $v'$. We insert the lattice point in this intersection whose orthogonal projection on $R'$ is the origin. Since $v'_i=1$, the $i$-th coordinate of the inserted point is $1$, and its other coordinates are all equal to $0$. Hence, after a last deletion move on $v'$, the resulting simplex is the corner simplex of $[0,k]^d$, which completes the proof. 
\end{proof}

Combining Lemma \ref{Lem.DPR.1.BB} and Theorem \ref{thm.main.2}, we get the following.

\begin{cor}\label{cor.Connecd}
For any positive $k$, $\Lambda(d,k)$ is connected.
\end{cor}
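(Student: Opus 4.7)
The plan is to reduce the problem to the case already settled by Theorem \ref{thm.main.2}, using Lemma \ref{Lem.DPR.1.BB} to strip down any lattice polytope to a simplex by a sequence of deletion moves. The key observation is that deletion moves behave very well with respect to both the lattice condition and the containment in $[0,k]^d$: the vertex set of the polytope resulting from a deletion is a subset of the original vertex set, so it consists of lattice points contained in $[0,k]^d$, and hence every intermediate polytope stays in $\Lambda(d,k)$.

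Concretely, I would start with an arbitrary lattice polytope $P$ in $\Lambda(d,k)$, say with $n$ vertices. If $n > d+1$, then by Lemma \ref{Lem.DPR.1.BB} the polytope $P$ is a pyramid over at most $d+1$ of its facets, so at least one vertex $v$ of $P$ is not the apex of such a pyramid, which means $v$ can be deleted while keeping the result $d$-dimensional. Deleting $v$ gives a lattice polytope in $[0,k]^d$ with $n-1$ vertices. Iterating this argument provides a path in $\Lambda(d,k)$ from $P$ to a $d$-dimensional lattice simplex contained in $[0,k]^d$.

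Applying this reduction to any two lattice polytopes $P$ and $Q$ in $\Lambda(d,k)$, we obtain paths from $P$ and $Q$ to lattice simplices $S_P$ and $S_Q$ in $\Lambda(d,k)$. By Theorem \ref{thm.main.2}, the subgraph induced by the simplices and polytopes with $d+2$ vertices (which in particular is a subgraph of $\Lambda(d,k)$) is connected, and so $S_P$ and $S_Q$ are linked by a path in that subgraph, hence in $\Lambda(d,k)$. Concatenating the three pieces yields a path in $\Lambda(d,k)$ from $P$ to $Q$, proving connectedness. There is essentially no obstacle here beyond carefully citing Lemma \ref{Lem.DPR.1.BB} to guarantee that the reduction to a simplex can always be carried out; all the substantive work has been done in Theorem \ref{thm.main.2}.
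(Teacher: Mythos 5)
Your proof is correct and follows exactly the paper's route: the paper states this corollary as an immediate combination of Lemma \ref{Lem.DPR.1.BB} (repeated deletions reduce any polytope in $\Lambda(d,k)$ to a simplex, all intermediate polytopes remaining lattice polytopes in $[0,k]^d$) with Theorem \ref{thm.main.2} (any two such simplices are connected). You have simply written out the details that the paper leaves implicit.
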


We now turn our attention to the connectedness of $\Lambda(d)$.

\begin{thm}\label{thm.Connect.lattice}
For any $d\geq2$, both $\Lambda(d)$ and the subgraph induced in $\Lambda(d)$ by simplices and polytopes with $d+2$ vertices are connected.
\end{thm}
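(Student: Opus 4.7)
The plan is to bootstrap from Theorem \ref{thm.main.2} and Corollary \ref{cor.Connecd} by exploiting translation invariance. The key observation I would record at the outset is that for every integer vector $a \in \mathbb{Z}^d$ the translation $\tau_a : x \mapsto x + a$ restricts to a graph automorphism of $\Lambda(d)$: it preserves the lattice $\mathbb{Z}^d$, it commutes with convex hulls, and it preserves the relation of being linked by an insertion or a deletion move. Consequently, Corollary \ref{cor.Connecd} applies verbatim to any translated hypercube $a + [0,k]^d$ in place of $[0,k]^d$, and so does Theorem \ref{thm.main.2}. I would simply note that the constructions in Section \ref{Sec.DPR.3} only manipulate a simplex, its minimal bounding lattice box, and lattice points within that box, all objects that transport uniformly under $\tau_a$, so the translated versions of these statements require no new argument.

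With that in hand, the connectedness of $\Lambda(d)$ is immediate. Given two lattice polytopes $P$ and $Q$, both are bounded, hence there exist $a \in \mathbb{Z}^d$ and a positive integer $k$ such that $P \cup Q \subseteq a + [0,k]^d$. The translated form of Corollary \ref{cor.Connecd} then yields a path between $P$ and $Q$ in the subgraph of $\Lambda(d)$ induced by the lattice polytopes contained in $a + [0,k]^d$, and this is a fortiori a path in $\Lambda(d)$.

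The subgraph of $\Lambda(d)$ induced by simplices and polytopes with $d+2$ vertices is treated by exactly the same reduction. Lemma \ref{Lem.DPR.1.BB} lets us delete a vertex from any $(d+2)$-vertex lattice polytope to reach a simplex in a single move, so it suffices to connect any two lattice simplices within this subgraph. Enclosing two given simplices in a common box $a + [0,k]^d$ and invoking the translated form of Theorem \ref{thm.main.2} produces a path between them through simplices and $(d+2)$-vertex polytopes, all contained in the enclosing box and therefore in $\Lambda(d)$. The entire plan hinges on the translation invariance remark of the first paragraph; there is no substantive obstacle beyond the bookkeeping needed to confirm that the proofs of Section \ref{Sec.DPR.3} carry over unchanged under integer translations.
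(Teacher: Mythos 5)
Your proposal is correct and follows essentially the same route as the paper: the paper's proof likewise observes that lattice translations are automorphisms of $\Lambda(d)$, displaces the two given polytopes into a common hypercube $[0,k]^d$, and then invokes Corollary \ref{cor.Connecd} for $\Lambda(d)$ and Theorem \ref{thm.main.2} for the subgraph of simplices and $(d+2)$-vertex polytopes. The only cosmetic difference is that you translate the box while the paper translates the polytopes.
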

\begin{proof}
Note that the translations of $\mathbb{Z}^d$ by lattice vectors induce automorphisms of $\Lambda(d)$. Since two $d$-dimensional lattice polytopes contained in $\mathbb{R}^d$ can always be displaced into the hypercube $[0,k]^d$ for some large enough $k$ by a such a translation, they both belong to a subgraph of $\Lambda(d)$ isomorphic to $\Lambda(d,k)$. The connectedness of $\Lambda(d)$ therefore follows from Corollary \ref{cor.Connecd}. For the same reason, the connectedness of the subgraph induced in $\Lambda(d)$ by simplices and polytopes with $d+2$ vertices follows from Theorem \ref{thm.main.2}.
\end{proof}

\section{The number of possible insertion and deletion moves}\label{Sec.DPR.4}

The main purpose of this section is to study how the vertex degrees in $\Lambda(d)$ and $\Lambda(d,k)$ decompose between insertion and deletion moves. In particular, we will exhibit a family of polytopes whose dimension and number of vertices can be arbitrarily large, but in which no lattice point can be inserted. Hence, the vertex degrees in $\Lambda(d)$ can be finite. 
We then turn our attention to $\Lambda(d,k)$. The vertex degrees in this graph are bounded above by $(k+1)^d$, the number of lattice points in $[0,k]^d$. This bound is obviously sharp when $k=1$ since all the lattice points in $[0,1]^d$ can be deleted from the hypercube itself. We will show that this bound is also sharp when both $d$ and $k$ grow large by exhibiting an extensive family of $d$-dimensional lattice polytopes contained in $[0,k]^d$ such that every lattice point in $[0,k]^d$ can either be inserted in or deleted from these polytopes. We first prove the following about $\Lambda(2)$. Thereafter, by a unit square we mean the square $[0,1]^2$ or any of its translates by a lattice vector.

\begin{lem}\label{Lem.DPR.4.1}
For any $n>3$ distinct from $5$, there exists a lattice polygon $P\subset\mathbb{R}^2$ with $n$ vertices such that no point of $\mathbb{Z}^2$ can be inserted in $P$. 
\end{lem}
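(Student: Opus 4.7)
The plan is the following. By \lemref{Lem.DPR.1.A}, a lattice point $x\in\mathbb{Z}^2\setminus P$ is insertable in $P$ exactly when $x$ lies in none of the cones $C_v(P)$; combined with \lemref{Lem.DPR.1.B}, this forces $x$ to lie in the open region bounded by the three lines $\mathrm{aff}(e)$, $\mathrm{aff}(e_-)$, $\mathrm{aff}(e_+)$, on the side of $\mathrm{aff}(e)$ opposite to $P$, for some edge $e$ of $P$ with adjacent edges $e_-$ and $e_+$. Call this open region the \emph{tent} $T_e$. The lemma therefore reduces to exhibiting, for each admissible $n$, a lattice $n$-gon all of whose tents contain no lattice points.

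The intended constructions are: for $n=4$, the unit square $[0,1]^2$, each of whose four tents is an open unit-wide strip containing no integer pair; for $n=6$, the hexagon $\mathrm{conv}\{(0,0),(1,0),(2,1),(2,2),(1,2),(0,1)\}$; for $n=7$, the heptagon $\mathrm{conv}\{(0,0),(1,0),(3,1),(3,2),(2,3),(1,3),(0,1)\}$; for $n=8$, the octagon $\mathrm{conv}\{(1,0),(2,0),(3,1),(3,2),(2,3),(1,3),(0,2),(0,1)\}$. For $n\geq 9$, use a Minkowski-sum construction: for even $n=2k$, the centrally symmetric primitive lattice zonotope built from $k$ primitive lattice segments whose directions are the first $k$ primitive lattice directions in counterclockwise angular order; for odd $n=2k+1\geq 9$, the Minkowski sum of a primitive lattice triangle with such a zonotope of $k-1$ segments, the triangle's three primitive directions chosen disjoint from those of the zonotope. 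In every case every edge of the resulting polygon is a primitive lattice segment of length one.

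The verification that each tent is lattice-free proceeds edge by edge. For a primitive edge $e$ of direction $d$ whose primitive neighbors $d_-,d_+$ are not opposite, $T_e$ is a bounded triangle; writing $\alpha d_-+\beta d_+=d$ with $\alpha,\beta>0$, its apex is $w^\star=u+\alpha d_-$ and its area equals $\frac{1}{2}\alpha\beta\,|d_-\times d_+|$. A Pick's-theorem calculation, together with the observation that every boundary lattice point of $T_e$ lies on $\mathrm{aff}(e_-)$ or $\mathrm{aff}(e_+)$ and hence already belongs to the cone of a neighbouring vertex, shows that $T_e$ contributes no insertable lattice point. The main obstacle is the degenerate case where $d_+=-d_-$ and $T_e$ becomes an infinite strip (as happens at each edge of the unit square): here one must verify directly that the strip is too narrow, in lattice units, to contain a lattice point in its interior. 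The constructions above are chosen so that this degeneracy occurs only when it is benign (as for $n=4$), and is avoided altogether for every $n\geq 6$.
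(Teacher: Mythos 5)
Your reduction to checking that every ``tent'' over an edge is lattice-free is exactly the reduction the paper makes via Lemmas \ref{Lem.DPR.1.A} and \ref{Lem.DPR.1.B}, and your explicit polygons for $n=4,6,7,8$ do check out. The gap is the general case $n\geq 9$. First, ``the first $k$ primitive lattice directions in counterclockwise angular order'' defines nothing: primitive directions are dense in angle, so no such enumeration exists, and your argument never pins down which generators are used. Second, and more seriously, the property you need is \emph{not} automatic for an angularly ordered family of primitive generators. The tent over an edge of direction $d$ flanked by edges of directions $d_-$ and $d_+$ has area $(d\times d_+)(d_-\times d)/(2\,d_-\times d_+)$, which can exceed $1/2$, and the tent can then contain a lattice point: for the zonotope generated by $(1,0)$, $(3,2)$, $(0,1)$ --- a perfectly good hexagon with primitive, angularly ordered generators --- the tent over the edge from $(1,0)$ to $(4,2)$ is $\{2x-3y>2,\ y>0,\ x<4\}$ and contains the lattice point $(3,1)$. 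So the verification you defer to ``a Pick's-theorem calculation'' is exactly where the content of the lemma lies, and it cannot be run as stated anyway, since the apex of a tent is generally not a lattice point. You would also need to check the wrap-around triples $(d_{k-1},d_k,-d_1)$ and $(-d_k,d_1,d_2)$, not only the interior ones, and the odd case (triangle plus zonotope) carries the same unresolved burden of specifying the interleaving and checking each resulting tent.

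The repair is to choose generators whose consecutive directions (cyclically, through the sign change) are unimodular, for instance $(1,0),(1,1),\dots,(1,k-2),(0,1)$: then every tent has area at most $1/2$ and, having two lattice vertices on $\mathrm{aff}(e)$, can contain no lattice point in its interior. This is in fact the paper's construction for even $n$ (its vertex set consists of the points $(i,i(i-1)/2)$ and their central reflections), verified there by the even simpler observation that each tent lies in a vertical strip of width one. For odd $n$ the paper does not use a triangle summand; it takes the convex hull of $P_{n+1}$ with one additional point, which absorbs two old vertices, and checks the two newly created tents by hand.
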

\begin{proof}
First observe that if $P$ is a unit square, then every point in the lattice $\mathbb{Z}^2$ is contained in the cone $C_v(P)$, where $v$ is one of the four vertices of $P$. It then follows from Lemma \ref{Lem.DPR.1.A} that no point of $\mathbb{Z}^2$ can be inserted in $P$, which proves the lemma when $n=4$.

Now assume that $n\geq6$ and consider the map
$$
f:x\mapsto{x(x-1)/2}\mbox{.}
$$

Let $\mathcal{A}$ be the set of the points $x$ in $\mathbb{Z}^2$ such that $x_2=f(x_1)$. Note that these points are the vertices of a convex polygonal line. We are going to build a polygon $P_n$ from this polygonal line.

First assume that $n$ is even and consider the point $a$ satisfying
$$
a_1=\frac{n}{2}-1\mbox{ and }a_2=f(a_1)+1\mbox{.}
$$

Let $P_n$ denote the polygon whose vertices are the elements $x$ in $\mathcal{A}$ such that $0\leq{x_1}<n/2$ and their symmetric with respect to the point $a/2$. This polygon is depicted in Fig. \ref{Fig.DPR.4.1} when $n$ is equal to $6$ (left), $8$ (center), and $10$ (right). By construction, $P_n$ is centrally-symmetric and its centroid is $a/2$. Note that it has $n$ vertices, half of whose belong to $\mathcal{A}$. Further note that $a$ and the point $b$ such that $b_1=a_1-1$ and $b_2=a_2$ are the two vertices of an horizontal edge of $P_n$. In the figure, the portion of $\mathbb{R}^2$ covered by the cones $C_v(P_n)$, where $v$ ranges over the vertices of $P_n$, is colored red.
\begin{figure}
\begin{center}
\includegraphics{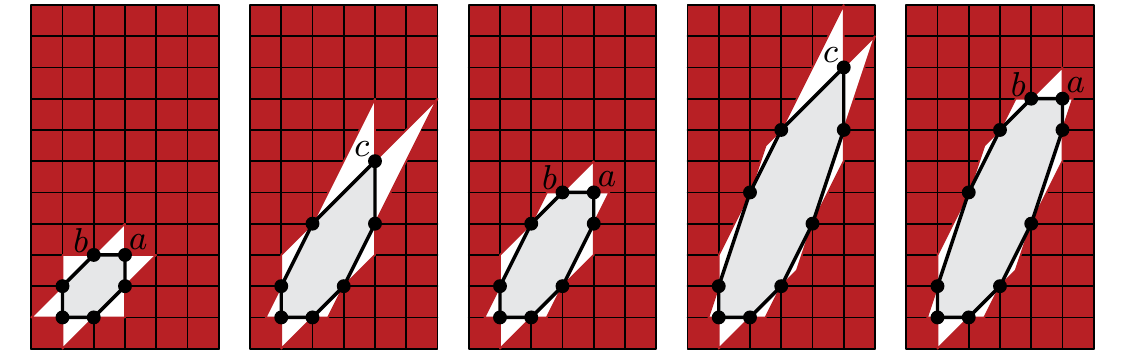}
\caption{The polygon $P_n$ when $n$ ranges from $6$ to $10$.}
\label{Fig.DPR.4.1}
\end{center}
\end{figure}
Observe that the portion of $\mathbb{R}^2$ that is not covered by $P_n$ or by any of the cones $C_v(P_n)$, where $v$ is a vertex of $P_n$, is the union of the interiors of a set of triangles colored white in the figure. By construction, each of these triangles is contained in the region of $\mathbb{R}^d$ made up of the points $x$ such that $i\leq{x_1}\leq{i+1}$, for some integer $i$. Hence, the interiors of these triangles entirely avoid the lattice $\mathbb{Z}^2$. It then follows from Lemma \ref{Lem.DPR.1.A} that no point in $\mathbb{Z}^2$ can be inserted in $P_n$.

Now assume that $n$ is odd. Let $P_n$ be the polygon obtained as the convex hull of $P_{n+1}$ and of the point $c$ such that $c_1=a_1$ and $c_2=a_2+1$. The polygon $P_n$ is depicted in Fig. \ref{Fig.DPR.4.1} when $n$ is equal to $7$ (second polygon from the left) and $9$ (next-to-last polygon). While $c$ is a vertex of $P_n$, $a$ and $b$ are no longer vertices of it because they are contained in the relative interiors of the edges of $P_n$ incident to $c$. As $P_{n+1}$ shares all its vertices with $P_n$ except for $a$ and $b$, $P_n$ has exactly $n$ vertices (one less than $P_{n+1}$). As above, the portion of $\mathbb{R}^2$ that is not covered by $P_n$ or by any of the cones $C_v(P_n)$, where $v$ is a vertex of $P_n$, is the union of the interiors of a set of triangles. As we have seen above, the interiors of all these triangles are disjoint from $\mathbb{Z}^2$, except possibly for the two triangles incident to $c$, that have been introduced when building $P_n$ from $P_{n+1}$. Among these two triangles, the one depicted on top of the polygon in Fig. \ref{Fig.DPR.4.1} does not depend on $n$, and it can be seen in the figure that its interior is disjoint from $\mathbb{Z}^2$. The other triangle depends on $n$. As can be seen in the figure, its interior is disjoint from $\mathbb{Z}^2$ when $n=7$. When $n\geq9$, this triangle is contained in the region of $\mathbb{R}^d$ made up of the points $x$ such that $n/2-1\leq{x_1}\leq{n/2}$, and its interior is therefore also disjoint from $\mathbb{Z}^2$. Hence, by Lemma \ref{Lem.DPR.1.A}, no point of the lattice can be inserted in $P_n$.
\end{proof}

Theorem \ref{thm.main.3} is an immediate consequence of Lemma \ref{Lem.DPR.4.1}: the subgraph induced in $\Lambda(2)$ by the polygons with $n$ or $n+1$ vertices is disconnected when $n$ is distinct from $3$ and $5$. According to Theorem \ref{thm.Connect.lattice}, this subgraph is connected when $n=3$. The exception for $n=5$ may look odd at first but turns out to make sense. Indeed, we shall see in the next section that the subgraph induced in $\Lambda(2)$ by pentagons and hexagons is connected.

Note that Lemma \ref{Lem.DPR.4.1} only exhibits isolated vertices within the subgraph induced in $\Lambda(2)$ by the polygons with $n$ or $n+1$ vertices. It turns out that this subgraph can have an infinite number of connected components larger than just an isolated vertex. Indeed, consider the polygon $P_{10}$ shown on the right of Fig.~\ref{Fig.DPR.4.1}, delete a vertex from this polygon, and call the resulting polygon $Q$. One can see that the deleted vertex is the only lattice point that can be inserted in $Q$. Since this holds for any vertex of $P_{10}$, we get a connected component of the subgraph induced in $\Lambda(2)$ by enneagons and decagons that contains exactly one decagon and ten enneagons. There is one such connected component for any translation of $P_{10}$ by a lattice vector. Thus, the subgraph induced in $\Lambda(2)$ by enneagons and decagons admits infinitely many connected components with $11$ vertices. We now generalize Lemma \ref{Lem.DPR.4.1} by showing that there are lattice polytopes of arbitrarily large dimension whose number of vertices is also arbitrarily large such that no lattice point can be inserted.

We will first need the following lemma.

\begin{lem}\label{Lem.DPR.4.4}
Let $P$ and $Q$ be two polytopes. If $u$ and $v$ are a vertex of $P$ and a vertex of $Q$, respectively, then $C_{u\mathord{\times}v}(P\mathord{\times}Q)$ coincides with $C_u(P)\mathord{\times}C_v(Q)$.
\end{lem}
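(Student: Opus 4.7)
The plan is to prove this directly from the definition of $C_w(R)$ as an intersection of half-spaces, by exploiting the well-known product structure of the face lattice of $P\mathord{\times}Q$. First I would recall that the facets of $P\mathord{\times}Q$ are exactly the products $F\mathord{\times}Q$, where $F$ ranges over the facets of $P$, and $P\mathord{\times}G$, where $G$ ranges over the facets of $Q$. Moreover, a vertex $u\mathord{\times}v$ of $P\mathord{\times}Q$ is incident to a facet of the form $F\mathord{\times}Q$ if and only if $u$ is incident to $F$ in $P$, and symmetrically for the facets of the form $P\mathord{\times}G$.

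The next step is to describe the half-spaces attached to these facets. If $F$ is a facet of $P$ lying in the hyperplane $\mathrm{aff}(F)\subset\mathbb{R}^{\dim P}$, then $F\mathord{\times}Q$ lies in the hyperplane $\mathrm{aff}(F)\mathord{\times}\mathbb{R}^{\dim Q}$ of $\mathbb{R}^{\dim P+\dim Q}$, and one checks immediately that
\begin{equation*}
H_{F\mathord{\times}Q}^-(P\mathord{\times}Q)=H_F^-(P)\mathord{\times}\mathbb{R}^{\dim Q}\mbox{,}
\end{equation*}
and analogously $H_{P\mathord{\times}G}^-(P\mathord{\times}Q)=\mathbb{R}^{\dim P}\mathord{\times}H_G^-(Q)$ for any facet $G$ of $Q$.

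Combining these two observations and distributing the intersection across the Cartesian product yields
\begin{equation*}
C_{u\mathord{\times}v}(P\mathord{\times}Q)=\Biggl(\bigcap_{F\in\mathcal{F}}H_F^-(P)\Biggr)\mathord{\times}\Biggl(\bigcap_{G\in\mathcal{G}}H_G^-(Q)\Biggr)=C_u(P)\mathord{\times}C_v(Q)\mbox{,}
\end{equation*}
where $\mathcal{F}$ and $\mathcal{G}$ denote the sets of facets of $P$ and $Q$ incident to $u$ and $v$, respectively. The only place requiring genuine care, and what I expect to be the main obstacle, is the initial bookkeeping: verifying that the listing of facets of $P\mathord{\times}Q$ is exhaustive and that no spurious inequalities arise, so that the two intersections above really match $C_u(P)$ and $C_v(Q)$ exactly. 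Once this is settled, the remaining steps are purely formal manipulations of Cartesian products.
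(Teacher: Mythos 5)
Your proof is correct and follows essentially the same route as the paper's: identify the facets of $P\mathord{\times}Q$ incident to $u\mathord{\times}v$ as the products $F\mathord{\times}Q$ and $P\mathord{\times}G$, observe that the corresponding half-spaces factor as $H_F^-(P)\mathord{\times}\mathbb{R}^{\dim Q}$ and $\mathbb{R}^{\dim P}\mathord{\times}H_G^-(Q)$, and intersect. The paper phrases the final step as a two-way inclusion of points $x\mathord{\times}y$ rather than "distributing the intersection," but the content is identical.
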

\begin{proof}
Consider two polytopes $P$ and $Q$ which, we assume are a $p$-dimensional and a $q$-dimensional polytope contained in $\mathbb{R}^p$ and $\mathbb{R}^q$, respectively. Let $u$ be a vertex of $P$ and $v$ a vertex of $Q$. The facets of $P\mathord{\times}Q$ incident to $u\mathord{\times}v$ are precisely the cartesian products of the form $F\mathord{\times}Q$ where $F$ is a facet of $P$ incident to $u$ and $P\mathord{\times}G$ where $G$ is a facet of $Q$ incident to $v$.

In particular, if $F$ is a facet of $P$ incident to $u$, then
$$
H^-_{F\mathord{\times}Q}(P\mathord{\times}Q)=H^-_F(P)\mathord{\times}\mathbb{R}^q\mbox{.}
$$

Similarly, if $G$ is a facet of $Q$ incident to $v$, then
$$
H^-_{P\mathord{\times}F}(P\mathord{\times}Q)=\mathbb{R}^p\mathord{\times}H^-_G(Q)\mbox{.}
$$

As a consequence, for any point $x$ in $C_u(P)$ and any point $y$ in $C_v(Q)$, $x\mathord{\times}y$ is contained both in $H^-_{F\mathord{\times}Q}(P\mathord{\times}Q)$ and in $H^-_{P\mathord{\times}G}(P\mathord{\times}Q)$. Inversely, if $x$ and $y$ are two points in $\mathbb{R}^p$ and $\mathbb{R}^q$, respectively, such that $x\mathord{\times}y$ is contained in $C_{u\mathord{\times}v}(P\mathord{\times}Q)$, then $x$ necessarily belongs to $H^-_{F}(P)$ and $y$ to $H^-_{F}(Q)$. Since these two statements hold for any facet $F$ of $P$ incident to $u$ and any facet $G$ incident to $v$, we obtain the desired equality.
\end{proof}

We prove the following result by considering cartesian products of polygons and hypercubes, for which no insertion of a lattice point is possible.

\begin{thm}\label{Thm.DPR.4.4}
For all $n>3$ such that $n\neq5$, and for all $d\geq{4}$, there exists a $d$-dimensional lattice polytope $P$ contained in $\mathbb{R}^d$ with $n2^{d-2}$ vertices such that no point in the lattice $\mathbb{Z}^d$ can be inserted in $P$. 
\end{thm}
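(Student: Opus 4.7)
The plan is to build $P$ as the cartesian product $P = P_n \times [0,1]^{d-2}$, where $P_n$ is the $n$-gon constructed in the proof of Lemma~\ref{Lem.DPR.4.1}. This is a $d$-dimensional lattice polytope whose vertex set is the cartesian product of the two vertex sets, giving exactly $n \cdot 2^{d-2}$ vertices; so the enumerative half of the statement is immediate, and only the insertability claim remains.

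The strategy for proving no lattice point of $\mathbb{Z}^d$ can be inserted combines Lemma~\ref{Lem.DPR.1.A} (insertability iff outside $P$ and outside every vertex cone) with the cone factorisation of Lemma~\ref{Lem.DPR.4.4},
\[
C_{(u,w)}(P_n \times [0,1]^{d-2}) \;=\; C_u(P_n) \times C_w([0,1]^{d-2}).
\]
Two ingredients feed into the argument. First, for the hypercube factor, every $y \in \mathbb{Z}^{d-2}$ lies in some $C_w([0,1]^{d-2})$: take $w_i = 0$ whenever $y_i \leq 0$ and $w_i = 1$ whenever $y_i \geq 1$, which always defines a valid vertex of the cube since $y_i$ is an integer. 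Second, for the polygon factor, Lemma~\ref{Lem.DPR.4.1} applied to $P_n$ places every $x \in \mathbb{Z}^2$ either in $P_n$ or in some $C_u(P_n)$. Given a lattice point $(x,y) \in \mathbb{Z}^2 \times \mathbb{Z}^{d-2}$, I would case-split on where $x$ and $y$ sit, then use Lemma~\ref{Lem.DPR.4.4} to put $(x,y)$ either in $P$ or in one of the product cones $C_{(u,w)}(P)$, which by Lemma~\ref{Lem.DPR.1.A} forbids insertion.

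The main difficulty is the mixed case in which $x \in P_n \setminus \bigcup_u C_u(P_n)$---that is, $x$ is a lattice point of $P_n$ that is not a vertex, hence sits in no vertex cone---while simultaneously $y$ lies outside $[0,1]^{d-2}$. For $n \geq 6$ such points do exist: for example $a/2 = (1,1)$ is an interior lattice point of $P_6$. In this configuration Lemma~\ref{Lem.DPR.4.4} does not, at face value, force $(x,y)$ into any cone of $P$, and closing the argument here is the heart of the proof. The resolution should exploit the very specific location of the non-vertex lattice points of the polygons $P_n$ produced in Lemma~\ref{Lem.DPR.4.1} (which are tightly controlled by the central symmetry about $a/2$), possibly by refining the choice of polygon so that every lattice point inside it already lies in some $C_u(P_n)$ before taking the product with the cube. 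This delicate step is where I expect the bulk of the work to live.
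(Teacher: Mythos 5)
Your construction, vertex count, and overall strategy are exactly those of the paper, which takes $P=Q\times[0,1]^{d-2}$ for an $n$-gon $Q$ from Lemma~\ref{Lem.DPR.4.1} and concludes in one line, from Lemma~\ref{Lem.DPR.1.A} and the product formula $C_{u\times w}(A\times B)=C_u(A)\times C_w(B)$, that no lattice point can be inserted. The ``mixed case'' you isolate is therefore not a step the paper handles differently --- the paper does not address it at all --- and your analysis of it is correct. By Lemma~\ref{Lem.DPR.1.A} and the product formula, a lattice point $(x,y)$ is non-insertable in $P_n\times[0,1]^{d-2}$ precisely when either $x\in P_n$ and $y\in[0,1]^{d-2}$, or $x\in\bigcup_u C_u(P_n)$ and $y\in\bigcup_w C_w([0,1]^{d-2})$. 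Since $H_F^-(P_n)\cap P_n=F$ for every edge $F$, one has $C_u(P_n)\cap P_n=\{u\}$, so an interior lattice point of $P_n$ lies in no cone $C_u(P_n)$; pairing it with a lattice point $y$ outside the cube yields a point satisfying neither alternative, hence an insertable one. Concretely, $(1,1)$ is an interior lattice point of $P_6$, and $(1,1,2,0,\ldots,0)$ lies outside $P_6\times[0,1]^{d-2}$ and in none of its vertex cones, so it can be inserted.

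The repair you sketch cannot close this gap. Requiring every lattice point of $P_n$ to lie in some $C_u(P_n)$ amounts, by $C_u(P_n)\cap P_n=\{u\}$, to requiring that the only lattice points of $P_n$ be its vertices; but any lattice polygon with at least five vertices has two vertices congruent modulo $2$, and their midpoint is a non-vertex lattice point of the polygon. So no lattice $n$-gon with $n\geq6$ can serve, and more generally no cartesian product of lattice polytopes can have the stated property unless every factor contains no non-vertex lattice points: if $x_0$ is a non-vertex lattice point of one factor and $y_0$ a lattice point outside the other, then $(x_0,y_0)$ is insertable in the product. You have therefore located a genuine gap, but it is a defect of the construction itself --- one that the paper's own one-line argument silently inherits --- rather than a technical step waiting to be filled in; establishing the statement for $n\geq6$ would require a different idea.
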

\begin{proof}
Let $n$ be an integer greater than $3$ and distinct from $5$. By Lemma \ref{Lem.DPR.4.1}, there exists a lattice polygon $Q$ with $n$ vertices such that no point of $\mathbb{Z}^2$ can be inserted in $Q$. Now assume that $d\geq4$, and recall that no point in $\mathbb{Z}^{d-2}$ can be inserted in the hypercube $[0,1]^{d-2}$. By Lemmas \ref{Lem.DPR.1.A} and \ref{Lem.DPR.4.4}, no point in $\mathbb{Z}^d$ can be inserted in $Q\mathord{\times}[0,1]^{d-2}$. This cartesian product is a $d$-dimensional lattice polytope with $n2^{d-2}$ vertices, as desired. 
\end{proof}

We now turn our attention to another interesting family of lattice polytopes. These polytopes play a peculiar role in the graph $\Lambda(d,k)$: they are the polytopes $P$ such that all the lattice points in $[0,k]^d$ can either be inserted in $P$ or deleted from it. When $k=1$, these polytopes admit a straightforward characterization: they are the $d$-dimensional polytopes contained in $[0,1]^d$ that are not pyramids over any of their facets.
\begin{figure}[b]
\begin{center}
\includegraphics{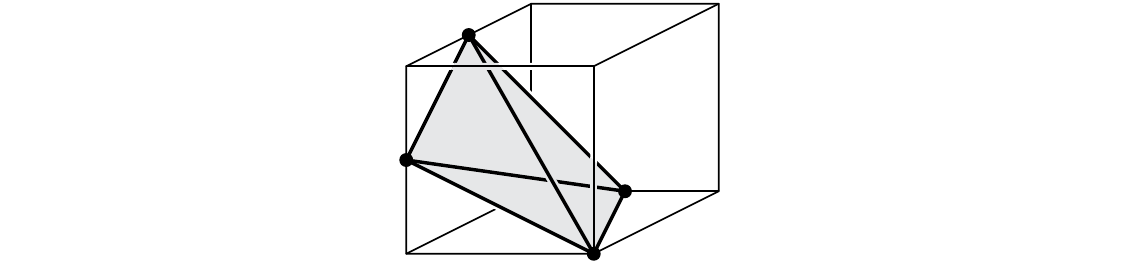}
\caption{An empty tetrahedron.}
\label{Fig.DPR.4.45}
\end{center}
\end{figure}
For this reason, we will assume that $k\geq2$ in the remainder of the section. The polytopes we are looking for are necessarily \emph{empty} lattice polytopes in the sense that their intersection with $\mathbb{Z}^d$ is precisely their vertex set. We will build them as cartesian products of empty simplices due to B{\'a}r{\'a}ny and Seb\H{o} (see \cite{HaaseZiegler2000,Sebo1999}), such as the empty tetrahedron depicted in Fig.~\ref{Fig.DPR.4.45} inside the cube $[0,2]^3$. Note that the property we are investigating here cannot carry over to the whole lattice $\mathbb{Z}^d$. Indeed, given a lattice polytope $P$ there is always a point in $\mathbb{Z}^d$ that cannot be inserted in or deleted from $P$: any lattice point in the affine hull of an edge of $P$ will have this property as soon as it is distinct from the two extremities of this edge.

\begin{thm}\label{Thm.DPR.4.5}
Consider an integer $k\geq2$. If $k+1$ is a proper divisor of $d$, then there exists a $d$-dimensional lattice polytope $P$ with $(k+2)^{d/(k+1)}$ vertices contained in $[0,k]^d$ such that, for any lattice point $x$ in $[0,k]^d$, $x$ can either be inserted in $P$ or removed from it. 
\end{thm}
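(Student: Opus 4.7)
The plan is to construct $P$ as a Cartesian product of copies of an empty lattice simplex. More precisely, let $S$ be a $(k+1)$-dimensional empty lattice simplex with $k+2$ vertices contained in the hypercube $[0,k]^{k+1}$, of the kind constructed by B\'ar\'any and Seb\H{o} \cite{HaaseZiegler2000,Sebo1999}; the tetrahedron in Figure~\ref{Fig.DPR.4.45} is an instance of such a simplex for $k=2$. Set $m=d/(k+1)$, an integer at least $2$ by the proper-divisor hypothesis, and take $P=S\mathord{\times}\cdots\mathord{\times}S$ with $m$ factors. Then $P$ is a $d$-dimensional lattice polytope contained in $[0,k]^d$, its vertex set is the $m$-fold Cartesian product of $\mathcal{V}(S)$, and $P$ has $(k+2)^m=(k+2)^{d/(k+1)}$ vertices. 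Since $P$ is a Cartesian product of at least two polytopes of positive dimension, $P$ is not a pyramid with any of its vertices as apex, so every vertex of $P$ can be deleted; and since $S$ is empty, so is $P$, meaning its lattice points are exactly its vertices.

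The central step, which I expect to be the main obstacle, is the following base case that $S$ must be shown to satisfy: every lattice point in $[0,k]^{k+1}$ that is not a vertex of $S$ can be inserted in $S$. Combined with the emptiness of $S$ and Lemma~\ref{Lem.DPR.1.A}, what really has to be verified is that the union of the cones $C_v(S)$, taken over the vertices $v$ of $S$, meets $\mathbb{Z}^{k+1}\cap[0,k]^{k+1}$ only at the vertices of $S$ themselves. I plan to establish this by a direct analysis of the facet normals of the explicit B\'ar\'any--Seb\H{o} simplex, and I expect this geometric verification to account for the bulk of the work.

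Granting the base case, the conclusion is short. Let $x\in[0,k]^d$ be a lattice point that is not a vertex of $P$, and write $x=(x_1,\ldots,x_m)$ with each $x_j$ a lattice point in $[0,k]^{k+1}$. Some coordinate $x_i$ fails to be a vertex of $S$, so by the base case it can be inserted in $S$: that is, $x_i\not\in{S}$ and $x_i\not\in{C_v(S)}$ for every vertex $v$ of $S$. The first assertion gives $x\not\in{P}$. For the second, Lemma~\ref{Lem.DPR.4.4} implies that for every vertex $w=(w_1,\ldots,w_m)$ of $P$ one has $C_w(P)=\prod_{j=1}^m C_{w_j}(S)$; since $x_i\not\in{C_{w_i}(S)}$ no matter how $w_i$ is chosen, we obtain $x\not\in{C_w(P)}$. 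Lemma~\ref{Lem.DPR.1.A} then shows that $x$ can be inserted in $P$. Together with the fact that every vertex of $P$ can be deleted, this proves that every lattice point in $[0,k]^d$ can either be inserted in or removed from $P$.
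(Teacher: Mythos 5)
Your overall architecture is exactly that of the paper: take the B\'ar\'any--Seb\H{o} empty simplex $S\subset[0,k]^{k+1}$, form $P=S^{d/(k+1)}$, use the product-of-cones identity (Lemma~\ref{Lem.DPR.4.4}) together with Lemma~\ref{Lem.DPR.1.A} to reduce insertability in $P$ to insertability in a single factor, and use the proper-divisor hypothesis to get at least two factors so that $P$ is not a pyramid over any facet and every vertex is deletable. That reduction, as you write it, is correct.

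The genuine gap is the step you yourself flag as the main obstacle: you never actually prove that every lattice point of $[0,k]^{k+1}$ other than a vertex of $S$ can be inserted in $S$, i.e.\ that $C_{a(i)}(S)\cap[0,k]^{k+1}=\{a(i)\}$ for each vertex $a(i)$. Announcing that you ``plan to establish this by a direct analysis of the facet normals'' is not a proof, and it is precisely here that all the geometric content of the theorem lives; without it the argument proves nothing. For the record, the paper closes this gap without touching facet normals of $S$ at all: it uses the explicit coordinates (the columns of the $(k+1)\times(k+2)$ matrix with $k$ on the diagonal and $1$ above it) to observe that each vertex $a(i)$ lies on a face $F$ of the hypercube --- a vertex of $[0,k]^{k+1}$ for $a(1)$, an edge for the others --- containing no other vertex of $S$. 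Taking a hyperplane $H$ supporting $[0,k]^{k+1}$ along $F$, one gets $S\cap H=\{a(i)\}$, and Lemma~\ref{Lem.DPR.1.B} then forces $C_{a(i)}(S)$ to lie in $H^-$ and to meet $H$ only in $a(i)$; since $[0,k]^{k+1}\cap H^-=F\subset H$, the cone meets the whole hypercube only in $a(i)$. If you want to salvage your write-up with minimal change, replace your promised normal-vector computation with this positional argument; otherwise you must actually carry out the facet-normal analysis, which is considerably more laborious.
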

\begin{proof}
Consider the following matrix with $k+2$ columns and $k+1$ rows:
$$
\left[
\begin{array}{ccccccc}
k & 1 & 0 & \cdots & 0 & 0 & 0 \\
0 & k & 1 & \ddots & \vdots & \vdots & \vdots\\
0 & 0 & k& \ddots & 0 & 0 & 0 \\
0 & 0 & 0 & \ddots & 1 & 0 & 0 \\
\vdots & \vdots& \vdots & \ddots & k & 1 & 0\\
0 & 0 & 0 & \cdots & 0 & k & 1\\
\end{array}
\right]
$$

Denote by $a(i)$ the point of $\mathbb{R}^{k+1}$ whose vector of coordinates is the $i$-th column of this matrix. These points are the vertices of a $(k+1)$-dimensional empty simplex $S$ \cite{HaaseZiegler2000,Sebo1999}. When $k$ is equal to $2$, $S$ is the empty tetrahedron depicted in Fig.~\ref{Fig.DPR.4.45} inside the cube $[0,2]^3$. Observe that each of the vertices of $S$ is either a vertex of the hypercube $[0,k]^{k+1}$ or contained in an edge of this hypercube: $a(1)$ is a vertex of $[0,k]^{k+1}$ and, when $2\leq{i}\leq{k+2}$, $a(i)$ is contained in the edge of $[0,k]^{k+1}$ whose two vertices are obtained from $a(i)$ by replacing its $(i-1)$-th coordinate by $0$ or $k$. Further observe that each of these edges contains exactly one vertex of $S$. In other words, whenever $1\leq{i}\leq{k+2}$, we can find a face $F$ of $[0,k]^{k+1}$ that contains $a(i)$ and no other vertex of $S$. Consider an hyperplane $H$ of $\mathbb{R}^{k+1}$ whose intersection with $[0,k]^{k+1}$ is $F$. Denote by $H^-$ the closed half-space of $\mathbb{R}^{k+1}$ bounded by $H$ such that $[0,k]^{k+1}\cap{H^-}=F$. Since $a(i)$ is the only vertex of $S$ contained in $F$, then $S\cap{H}=\{a(i)\}$. By Lemma \ref{Lem.DPR.1.B}, the intersection of $[0,k]^{k+1}$ with $C_{a(i)}(S)$ is therefore precisely $\{a(i)\}$. According to Lemma \ref{Lem.DPR.1.A} all the lattice points in $[0,k]^{k+1}$ can be inserted in $S$, except for the vertices of $S$.

Now assume that $k+1$ is a divisor of $d$ and denote by $P$ the cartesian product $S^{d/(k+1)}$. It follows from Lemma \ref{Lem.DPR.4.4} that every lattice point in $[0,k]^d$ can be inserted in $P$ except for the vertices of $P$. Consider a facet $G$ of $P$. This facet is obtained as the cartesian product of $d/(k+1)-1$ copies of $S$ with a facet $F$ of $S$. If $F$ is the $j$-th term of the product, the vertices of $P$ that are not incident to $G$ are precisely the cartesian products of $d/(k+1)-1$ (possibly not pairwise distinct) vertices of $S$ such that the $j$-th term in the product is equal to the vertex of $S$ not incident to $F$. Since $k+1$ is a proper divisor of $d$, then there are several such vertices and $P$ cannot be a pyramid over any of its facets. As a consequence, all the vertices of $P$ can be deleted.
\end{proof}

Theorem \ref{Thm.DPR.4.5} does not hold in dimension $2$.

\begin{thm}\label{Thm.DPR.4.6}
Consider a positive integer $k\geq2$. If $P$ is a lattice polygon contained in $[0,k]^2$, then there exists a lattice point in $[0,k]^2$ that cannot be inserted in $P$ or a vertex of $P$ that cannot be deleted from $P$. 
\end{thm}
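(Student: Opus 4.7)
The plan is to proceed by contradiction. Suppose every lattice point of $[0,k]^2$ can be either inserted in $P$ or deleted from $P$. A non-vertex lattice point that lies in $P$ is neither insertable (it does not enlarge $\mathcal{V}$) nor deletable (it is not a vertex), so $P$ must be empty in the sense that $\mathcal{V} = P \cap \mathbb{Z}^2$. Moreover, deleting a vertex of a triangle leaves only a one-dimensional segment, so every vertex of $P$ is deletable only when $P$ has $n \geq 4$ vertices. It therefore suffices to exhibit a lattice point $x \in [0,k]^2 \setminus \mathcal{V}$ lying in some cone $C_v(P)$, since Lemma~\ref{Lem.DPR.1.A} will then prevent $x$ from being inserted in $P$.

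The main construction is a reflection. For any edge $uv$ of $P$, set $x := 2v - u$. Writing each outer half-plane $H_F^-(P)$ at $v$ as $\{y : \langle y - v,\, n_F\rangle \geq 0\}$ with $n_F$ the outer normal to an edge $F$ of $P$ at $v$, the membership $x \in C_v(P)$ reduces to $\langle u - v,\, n_F\rangle \leq 0$ for every edge $F$ of $P$ incident to $v$; this is immediate because $u \in P$ and $v$ maximizes $\langle \cdot,\, n_F\rangle$ over $P$. Hence $x$ is a lattice point in $C_v(P)$, and since $C_v(P) \cap P = \{v\}$ while $x \neq v$, $x$ lies strictly outside $P$ and cannot be a vertex. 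The task thus reduces to producing an edge $uv$ of $P$ with $2v - u \in [0,k]^2$, i.e.\ $u_i \in [2v_i - k,\, 2v_i]$ for $i = 1, 2$.

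To find such an edge, the plan is a case analysis driven by the extremal vertices of $P$. If the bounding box of $P$ is strictly smaller than $[0,k]^2$ in some direction---say the rightmost vertex $v_r$ satisfies $v_{r,1} \leq k-1$---then, choosing the neighbor $u$ of $v_r$ so that $2v_{r,2} - u_2 \in [0,k]$, which in the generic configuration only requires that the two edges at $v_r$ have vertical components of opposite signs, the reflection $2v_r - u$ lies in $[0,k]^2$ and provides the desired lattice point in $C_{v_r}(P)$. Degenerate configurations where both edges at $v_r$ point strictly upward, or both strictly downward, are handled by passing to a different extremal vertex. Symmetric arguments along the four coordinate directions together dispatch every configuration in which $P$ fails to touch at least one side of $[0,k]^2$.

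The remaining and most delicate case is when $P$ simultaneously touches all four sides of $[0,k]^2$. A first observation narrows the possibilities: at most two corners of $[0,k]^2$ can be vertices of $P$, and they must be diagonally placed. Indeed, if two corners of $[0,k]^2$ on a common side were both vertices of $P$, then by convexity $P$ would contain the segment joining them, which has $k - 1 \geq 1$ interior lattice points, contradicting emptiness. Combining this restriction with Pick's identity $\mathrm{area}(P) = n/2 - 1$ and the area lower bound that spanning $[0,k]^2$ forces on $P$ reduces the combinatorial type of $P$ to a short explicit list, and for each such type one can pinpoint a non-extremal vertex $v$ of $P$ and a neighbor $u$ whose reflection $2v - u$ still lies in $[0,k]^2 \setminus \mathcal{V}$, providing the promised non-insertable lattice point. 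The bookkeeping involved in this last step is where I expect the main work of the proof to lie.
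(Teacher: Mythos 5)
Your two opening reductions are sound and genuinely different from the paper's route: a non-vertex lattice point of $P$ is indeed neither insertable nor deletable, so $P$ may be assumed empty, and the reflection $x=2v-u$ of an edge $uv$ does lie in $C_v(P)$, hence is a non-vertex lattice point that cannot be inserted. But the rest of the plan has two genuine gaps. First, in the case where $P$ misses a side of the box, the containment $2v_r-u\in[0,k]^2$ simply does not follow from $v_{r,1}\leq k-1$: the first coordinate of the reflection is $v_{r,1}+(v_{r,1}-u_1)$, and the horizontal span $v_{r,1}-u_1$ of an edge of an empty polygon can far exceed the gap $k-v_{r,1}$ to the wall, since primitive edges can be long. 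Concretely, for the empty parallelogram with vertices $(0,0)$, $(a,1)$, $(2a+1,2)$, $(a+1,1)$ inside $[0,2a+2]^2$, \emph{every} reflection taken at an extremal vertex (rightmost, topmost, leftmost or bottommost) leaves the box; the only reflections that land inside are those taken at the two non-extremal vertices $(a,1)$ and $(a+1,1)$, which your recipe (extremal vertex, neighbor chosen by the sign of its vertical component, ``pass to a different extremal vertex'' in degenerate cases) never reaches. Second, the four-sides case is not actually carried out: Pick's formula together with $\mathrm{area}(P)\geq k^2/2$ yields the \emph{lower} bound $n\geq k^2+2$, which by itself does not reduce $P$ to ``a short explicit list''; for that you would need an upper bound on the number of vertices of an empty convex lattice polygon (they are in fact only triangles and area-one parallelograms, which makes this case vacuous for $k\geq2$), a fact you neither state nor prove. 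As written, the ``bookkeeping'' you defer is where the theorem actually gets proved.

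For what it is worth, the paper's argument avoids emptiness, Pick and area bounds altogether: it fixes a diameter pair $a,b$ of vertices of $P$, considers the rectangle $[a_1,b_1]\times[a_2,b_2]$, and in each of a few cases exhibits directly either a non-vertex lattice point of $P$ inside a triangle $abc$, a lattice point on the affine hull of a horizontal or vertical edge, or the lattice point $(b_1,c_2)$ inside a cone $C_c(P)$. If you want to salvage your reflection approach, the viable path is to first prove that an empty convex lattice polygon with at least four vertices is a unimodular parallelogram and then check the eight reflections of that parallelogram explicitly, rather than arguing via extremal vertices.
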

\begin{proof}
Consider a lattice polygon $P$ contained in $[0,k]^2$. Denote by $a$ and $b$ two vertices of $P$ whose distance is the largest possible. Note that the distance of $a$ and $b$ is then at least $\sqrt{2}$. In particular, if $a_1=b_1$ or if $a_2=b_2$, then the convex hull of $a$ and $b$ contains at least one lattice point in its interior. This lattice point cannot be inserted in or deleted from $P$, and the theorem holds in this case. In the following we assume that $a_1\neq{b_1}$ and $a_2\neq{b_2}$. Using the symmetries of the lattice, we can assume without loss of generality that $a_i<b_i$ when $i\in\{1,2\}$. Consider the rectangle $[a_1,b_1]\mathord{\times}[a_2,b_2]$. 

First assume that $P$ has a vertex $c$ outside of the rectangle $[a_1,b_1]\mathord{\times}[a_2,b_2]$. Taking advantage of the symmetries of that rectangle, we can assume without loss of generality that $c_1>b_1$. In this case, $c_2$ is necessarily less than $b_2$ because $c$ is at most as distant from $a$ than $b$. If $a_2\leq{c_2}<b_2$, then the lattice point $x$ such that $x_1=b_1$ and $x_2=c_2$ is in the triangle with vertices $a$, $b$, and $c$ and it is distinct from its three vertices. Hence, $x$ cannot be inserted in or deleted from $P$ because it is contained in $P$ and it is distinct from all the vertices of $P$. If $c_2<a_2$, then the lattice point $x$ such that $x_1=b_1$ and $x_2=a_2$ is in the interior of the triangle with vertices $a$, $b$, and $c$. As above, this point cannot be inserted in $P$ or deleted from it, proving the theorem in this case.

Now assume that $P$ has a vertex $c$ distinct from $a$ and $b$ inside the rectangle $[a_1,b_1]\mathord{\times}[a_2,b_2]$. If $c$ is in some edge of that rectangle, then $P$ has a horizontal or a vertical edge. None of the lattice points in $[0,k]^2$ that belong to the affine hull of that edge can be inserted in $P$ and, since $k\geq2$, at least one of these lattice points is not a vertex of $P$. This point therefore cannot be inserted in or deleted from $P$, and the theorem holds in this case. Finally, if $c$ is in $]a_1,b_1[\mathord{\times}]a_2,b_2[$, we can assume without loss of generality that $c$ is below the affine hull of $a$ and $b$. In this case, $C_c(P)$ contains all the points $x$ such that $x_1\geq{c_1}$ and $x_2=c_2$. In particular, the point $x$ such that $x_1=b_1$ and $x_2=c_2$ belongs to the interior of this cone, which completes the proof.
\end{proof}

\section{The subgraph of $\Lambda(2)$ induced by pentagons and hexagons}\label{Sec.DPR.5}

According to Theorem \ref{thm.main.3}, the subgraph induced in $\Lambda(2)$ by the polygons with $n$ or $n+1$ vertices is always  disconnected except possibly when $n$ is equal to $3$ and $5$. By Theorem \ref{thm.Connect.lattice}, this subgraph is connected when $n=3$. In this section, we deal with the remaining case. As a first step, we describe a large connected component of the subgraph induced in $\Lambda(2)$ by the polygons with $n$ or $n+1$ vertices.
We will need the following notion.

\begin{defn}\label{defn.wedge}
A polygon $P$ will be called \emph{oblique} if it admits two consecutive vertices $a$ and $b$ such that for every vertex $v$ of $P$ distinct from $a$ and from $b$, the inequalities $a_1<v_1<b_1$ and $a_2<v_2<b_2$ hold.  
\end{defn}

An example of an oblique polygon $P$ is depicted on the left of Fig. \ref{Fig.DPR.4.2}.

\begin{lem}\label{Lem.DPR.4.2}
Consider an oblique lattice polygon $P$ with $n$ vertices. The translates of $P$ by a lattice vector, and the lattice polytopes centrally symmetric to $P$ with respect to a point of the plane all belong to the same connected component of the subgraph induced in $\Lambda(2)$ by the polygons with $n$ or $n+1$ vertices. 
\end{lem}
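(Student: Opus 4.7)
The plan is to exploit the translation invariance of $\Lambda(2)$: as observed in the proof of Theorem~\ref{thm.Connect.lattice}, every lattice translation induces an automorphism of the graph $\Lambda(2)$ and hence of the subgraph induced by polygons with $n$ or $n+1$ vertices. It therefore suffices to construct, inside this subgraph, paths from $P$ to each of $P+e_1$, $P+e_2$, and to one specific centrally symmetric copy of $P$; every other translate or centrally symmetric copy is then reached by composing the corresponding path with a suitable lattice translation.

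For the translation part, the plan is to use the oblique structure. Let $a$ and $b$ be the two consecutive vertices of $P$ provided by Definition~\ref{defn.wedge}, and assume without loss of generality that the remaining vertices $v_1,\ldots,v_{n-2}$ all lie strictly below the line through $a$ and $b$. Applying Lemma~\ref{Lem.DPR.1.B} to the edge $ab$, the cones $C_a(P)$ and $C_b(P)$ are confined to the closed upper half-plane bounded by that line; together with an analogous local inspection of the cones $C_{v_i}(P)$, which lie in half-planes below the edges of $P$ incident to $v_i$, this shows via Lemma~\ref{Lem.DPR.1.A} that each of the lattice points $b\pm e_j$ and $a\pm e_j$ for $j\in\{1,2\}$ is insertable in $P$. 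From here I will construct an explicit sequence of insertion-deletion pairs that successively shifts the extremal vertices $a$ and $b$, and then each interior vertex, by the chosen unit vector, ending at $P+e_1$ (respectively $P+e_2$) with every intermediate polygon in the subgraph.

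For the central symmetry part, I will target the reflection $\overline{P}$ of $P$ with respect to the midpoint of $ab$; this is a lattice polygon with the same extremal vertices $a$ and $b$ (which are swapped by the reflection) and whose remaining vertices are the reflections $\overline{v_i}=a+b-v_i$, lying strictly above the line $ab$ inside the same open rectangle $(a_1,b_1)\times(a_2,b_2)$. The same cone-avoidance argument ensures that each $\overline{v_i}$ is insertable in $P$. My plan is to insert these reflected vertices one by one in an order compatible with the cyclic structure of $\overline{P}$, each insertion being immediately followed by the deletion of an appropriately chosen original $v_i$, so that every intermediate polygon has $n$ or $n+1$ vertices.

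The main obstacle in both parts is verifying that the intermediate polygons really do belong to the subgraph. An insertion-deletion pair can inadvertently drop the vertex count below $n$ if the inserted lattice point renders a previously extremal vertex interior to the new convex hull, or if the deletion collapses additional vertices; controlling this requires a careful choice of the order of operations and a local analysis of the edges affected by each move. The oblique structure of $P$ and of $\overline{P}$, combined with repeated use of Lemmas~\ref{Lem.DPR.1.A} and~\ref{Lem.DPR.1.B}, makes this bookkeeping tractable and constitutes the heart of the argument.
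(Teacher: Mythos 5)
There is a genuine gap, and it is concentrated in your translation step. Your plan is to reach $P+e_j$ by shifting the vertices of $P$ one at a time, each shift being an insertion of $v+e_j$ followed by the deletion of $v$. This fails in general, because the required insertions into the \emph{intermediate} polygons need not be legal: the point to be inserted may lie inside the intermediate polygon, or its insertion may swallow a vertex (which, by the paper's definition, makes the insertion move invalid, not merely inconvenient). A concrete counterexample: take the oblique triangle $P$ with vertices $(0,0)$, $(1,2)$, $(2,3)$ and target $P+e_1$ with vertices $(1,0)$, $(2,2)$, $(3,3)$. In every one of the six possible orders the procedure gets stuck; for instance, after shifting $(0,0)$ and $(1,2)$ one reaches the triangle with vertices $(1,0)$, $(2,2)$, $(2,3)$, and inserting $(3,3)$ there is forbidden because $(2,2)$ would cease to be a vertex, while in the orders that shift $(2,3)$ early one is asked to insert $(2,2)$ into the triangle with vertices $(1,0)$, $(1,2)$, $(3,3)$, in whose interior $(2,2)$ lies. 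Your preliminary claim that all eight points $a\pm e_j$, $b\pm e_j$ are insertable in $P$ is also only asserted, and even if true it would not rescue the scheme, since the obstruction arises at later steps. Separately, your reduction ``translate by $e_1$ and $e_2$ and conjugate by lattice translations'' only yields translates by vectors in the monoid generated by $e_1,e_2$ unless you also handle $-e_1,-e_2$ or argue reversibility, which you should make explicit.

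The paper avoids all of this with one observation you are missing: if $E$ is the edge $ab$ from Definition~\ref{defn.wedge} and $Q$ is the image of $P$ under the point reflection through the midpoint of $E$, then obliqueness forces the vertices of $\operatorname{conv}(P\cup Q)$ to be exactly the vertices of $P$ together with those of $Q$, i.e.\ the union is in convex position; Lemma~\ref{Lem.DPR.1.BC} then supplies the entire path at once, with no per-move bookkeeping. Translates are reached by first passing to $Q$ and then noting that $Q$ and $P+u$ are again jointly in convex position (after reducing to $u_2=0$ and choosing orientations so that $P+u\subset H^-_E(Q)$), and arbitrary central symmetries are compositions of a reflection with a translation. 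Your central-symmetry step is in fact this argument in disguise: the ``careful ordering'' you defer to is precisely what the convex-position observation plus Lemma~\ref{Lem.DPR.1.BC} delivers for free, so you should invoke that lemma rather than promise the bookkeeping. For the translation step, however, the one-vertex-at-a-time route must be replaced, not just completed: passing through the reflected copy $Q$ (or some equivalent detour that first moves the whole polygon off the line $\operatorname{aff}(E)$) is essential.
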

\begin{proof}
Let $a$ and $b$ be the two consecutive vertices of $P$ such that for any vertex $v$ of $P$ distinct from $a$ and $b$, the inequalities $a_1<v_1<b_1$ and $a_2<v_2<b_2$ hold. Denote by $E$ the edge of $P$ with vertices $a$ and $b$. We start by proving that the lattice polygon $Q$ symmetric to $P$ with respect to the center of $E$ can be reached from $P$ by inserting a sequence of lattice points and deleting a vertex immediately after each insertion. Since $P$ is oblique, the vertices of the convex hull of $P\cup{Q}$ are exactly the vertices of $P$ and the vertices of $Q$.
\begin{figure}
\begin{center}
\includegraphics{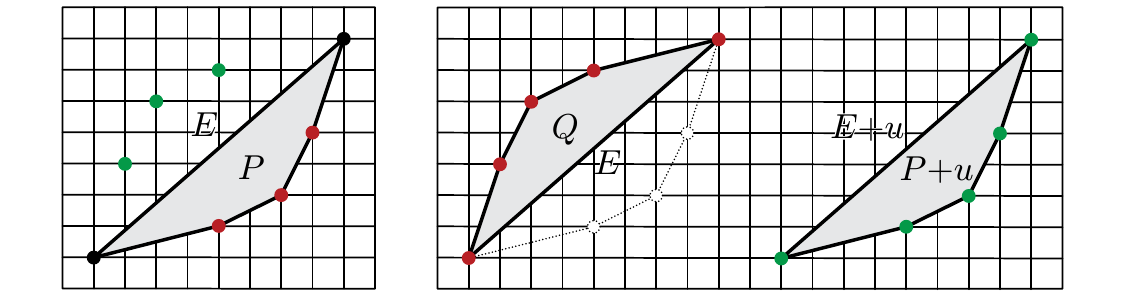}
\caption{Symmetrizing and translating an oblique lattice polygon.}
\label{Fig.DPR.4.2}
\end{center}
\end{figure}
Lemma \ref{Lem.DPR.1.BC}, then provides the desired sequence of moves.

Now, we show that the translate of $P$ by a lattice vector $u$ can be reached using an appropriate sequence of elementary moves. Decomposing this translation along the two directions of the lattice, we can restrict to showing this property when $u_2=0$. We can assume without loss of generality that $u_1>0$ by exploiting the symmetries of $\mathbb{Z}^2$ and we can also assume without loss of generality that $H^-_E(P)$ does not contain $E+u$ by, if needed, considering the reverse transformation from $P+u$ to $P$ instead of the transformation from $P$ to $P+u$. We first perform a sequence of moves that transform $P$ into its symmetric $Q$ with respect to the center of $E$, as explained above. After that, note that $P+u$ is a subset of $H^-_E(Q)$. In fact, the convex hull of $Q$ and $P+u$ admits, as its vertices, the vertices of $Q$ and the vertices of $P+u$, as shown on the right of Fig. \ref{Fig.DPR.4.2}. Lemma \ref{Lem.DPR.1.BC}, then again provides the desired sequence of moves.

Finally, combining the symmetrization operation and the translation operation shows that all the lattice polygons centrally symmetric to $P$ with respect to a point of the plane can be obtained from $P$ by inserting lattice points, and deleting vertices immediately after each insertion.
\end{proof}

We now prove that oblique lattice polygons with the same number of vertices can always be changed into one another in $\Lambda(2)$ by a sequence of elementary moves that alternate between insertions and deletions.

\begin{lem}\label{Lem.DPR.4.3}
For any $n\geq3$, the oblique lattice polygons with $n$ vertices all belong to the same connected component of the subgraph induced in $\Lambda(2)$ by the polygons with $n$ or $n+1$ vertices. 
\end{lem}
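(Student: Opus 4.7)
The plan is to show that every oblique lattice polygon with $n$ vertices can be transformed, via a sequence of elementary moves staying within polygons with $n$ or $n+1$ vertices, into a fixed canonical oblique polygon $C_n$, up to lattice translation and central reflection. By Lemma~\ref{Lem.DPR.4.2} these two symmetries do not affect the connected component in the relevant subgraph, so it suffices to reach $C_n$ modulo them. A convenient choice for $C_n$ is the strictly convex lattice chain whose vertices are $(i,i(i+1)/2)$ for $0\le i\le n-1$, with oblique pair $a=(0,0)$ and $b=(n-1,n(n-1)/2)$.

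Given an oblique polygon $P$ with oblique pair $(a,b)$ and inner chain vertices $v_1,\ldots,v_{n-2}$ strictly inside the rectangle with corners $a$ and $b$, the basic transformation I use is: to replace a vertex $u$ of $P$ by a new lattice point $w$, first insert $w$ (producing a polygon with $n+1$ vertices) and then delete $u$. By Lemma~\ref{Lem.DPR.1.A} the insertion is valid exactly when $w\notin P$ and $w\notin C_v(P)$ for every vertex $v$ of $P$, and by Lemma~\ref{Lem.DPR.1.B} these forbidden cones are controlled by the supporting lines of the edges of $P$ incident to each vertex. For an inner chain vertex $v_i$, a target $v_i'$ placed above the current chain (further from the edge $[a,b]$) satisfies these conditions whenever the enlarged chain remains strictly convex; $v_i$ then remains a vertex of the $(n+1)$-vertex polygon obtained after insertion, and deleting $v_i$ leaves an oblique polygon with $n$ vertices. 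A central reflection via Lemma~\ref{Lem.DPR.4.2} lets me push chain vertices in the opposite direction as well, so that $v_i'$ can in fact be placed on either side of the current chain.

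To modify the extremal vertex $b$, a more indirect procedure is needed. First I perform chain-vertex replacements (and, if needed, reflections and translations) until the neighbour $v_{n-2}$ of $b$ on the chain is in a position where deleting $b$ still leaves an oblique polygon with oblique pair $(a,v_{n-2})$; I then insert a carefully chosen point $w$ above the chain, producing an $(n+1)$-vertex oblique polygon, and delete $b$, so that the new oblique pair becomes $(a,v_{n-2})$ and the old $b$ has been effectively removed. A subsequent lattice translation followed by the reverse sequence of moves reintroduces an extremal vertex in whatever canonical position we wish. Iterating chain-vertex and extremal-vertex replacements, interspersed with the translations and reflections provided by Lemma~\ref{Lem.DPR.4.2}, eventually brings $P$ to the canonical form $C_n$.

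The main obstacle is precisely the extremal-vertex replacement, since a point placed beyond $b$ in both coordinates lies in the cone $C_b(P)$ and therefore cannot be inserted directly. This forces the indirect construction above, whose correctness is the most delicate part of the proof: one has to verify that, throughout the whole sequence, obliqueness is preserved, the number of vertices never leaves $\{n,n+1\}$, and each intermediate insertion and deletion satisfies the hypotheses of Lemmas~\ref{Lem.DPR.1.A} and \ref{Lem.DPR.1.B}. The bookkeeping is done by a case analysis on the relative position of the chain, the extremal edge $[a,b]$, and the target canonical configuration, exploiting the monotonicity of the chain inherent to obliqueness.
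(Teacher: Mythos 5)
Your proposal is a normalization strategy (reduce every oblique $n$-gon to a canonical $C_n$ by local vertex replacements), which is a genuinely different route from the paper's, but as written it has a real gap: the steps that carry all the difficulty are asserted rather than proved. First, each ``replace $u$ by $w$'' move requires a \emph{lattice} point $w$ in the admissible insertion region of the current polygon, and in this paper that is precisely the delicate issue --- Lemma~\ref{Lem.DPR.4.1} exhibits lattice polygons in which no lattice point whatsoever can be inserted, and the admissible region near a given vertex is a union of small triangles that may well miss $\mathbb{Z}^2$. You never show that a lattice point exists in the specific triangle needed to push a prescribed chain vertex toward its prescribed canonical position, nor that progress toward $C_n$ can always be made. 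Second, the extremal-vertex replacement is only described in outline (``insert a carefully chosen point $w$'', ``the reverse sequence of moves reintroduces an extremal vertex in whatever canonical position we wish''), and no termination argument is given for the overall iteration; your final paragraph explicitly defers ``the most delicate part.'' As it stands this is a plan, not a proof.

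For comparison, the paper's proof avoids local surgery entirely. Given two oblique polygons $P$ and $Q$ with distinguished edges $E$ and $F$, it uses Lemma~\ref{Lem.DPR.4.2} to translate and, if necessary, centrally reflect them so that $E$ and $F$ both cross a common line, $Q\subset H_E^-(P)$, $P\subset H_F^-(Q)$, and the two polygons are far apart along that line. Obliqueness then guarantees that the union of their vertex sets is in convex position, so Lemma~\ref{Lem.DPR.1.BC} immediately yields a path between them in the subgraph induced by polygons with $n$ or $n+1$ vertices. Every insertion in that path is of a vertex of the other polygon, whose insertability is automatic from convex position --- exactly the verification your local approach leaves open. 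If you want to salvage your argument, you would need, for each replacement step, an existence proof of an admissible lattice target together with a monovariant showing the process reaches $C_n$; the paper's global argument is the easier way through.
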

\begin{proof}
Assume that $P$ and $Q$ are two oblique lattice polygons. Let $a$ and $b$ be the consecutive vertices of $P$ such that for any vertex $v$ of $P$ distinct from $a$ and $b$, $a_1<v_1<b_1$ and $a_2<v_2<b_2$. Denote by $E$ the edge of $P$ with vertices $a$ and $b$ and by $F$ the corresponding edge of $Q$. Let $L$ be the line made up of the points $x\in\mathbb{R}^2$ such that $x_2=0$. By Lemma \ref{Lem.DPR.4.2}, we can translate $P$ and $Q$ in such a way that $E$ and $F$ both intersect $L$ in their relative interiors. We can also require this translation to send $Q$ into $H^-_E(P)$. We further require that $P$ is a subset of $H^-_F(Q)$ by, if needed, using a sequence of moves that transform $Q$ into its symmetric with respect to the center of $F$. Now observe that, if $P$ and $Q$, after these operations, are sufficiently far apart along $L$, then the convex hull of their union admits, for its vertex set, the union of the vertex sets of $P$ and $Q$. The result then follows from Lemma \ref{Lem.DPR.1.BC}.
\end{proof}

The next step consists in showing that the connected component of the subgraph induced in $\Lambda(2)$ by the polygons with $n$ or $n+1$ vertices that contains the oblique lattice polygons also contains a larger class of polygons.

\begin{defn}\label{defn.flat}
A polygon $P$ will be called \emph{flat} if there exist a non-zero lattice vector $c$ and two consecutive vertices $a$ and $b$ of $P$ such that for every vertex $v$ of $P$ distinct from $a$ and from $b$, $a\mathord{\cdot}c\leq{v\mathord{\cdot}c}\leq{b\mathord{\cdot}c}$. If these inequalities are strict for every vertex $v$ of $P$, then $P$ is called \emph{strongly flat}.
\end{defn}

A flat lattice polygon $P$ is depicted on the left of Fig. \ref{Fig.DPR.4.3}, where the vector $c$ mentioned in Definition \ref{defn.flat} is colored blue. By this definition, the lines orthogonal to $c$ through the vertices of $P$, colored blue in the figure, all intersect one of the edges of $P$, labelled $E$ in the figure. If these lines are pairwise distinct or, equivalently, if no edge of the polygon is orthogonal to $c$, then the polygon is strongly flat. We have the following.

\begin{lem}\label{Lem.DPR.4.4}
Let $P$ be a flat lattice polygon with $n$ vertices. If $n\geq5$, then $P$ is connected to a strongly flat polygon with $n$ vertices by a path in the subgraph induced in $\Lambda(2)$ by the polygons with $n$ or $n+1$ vertices. 
\end{lem}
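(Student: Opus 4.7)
My plan is to show that a single insertion-deletion pair suffices to connect $P$ to a strongly flat polygon with $n$ vertices. After a $\mathrm{GL}(2,\mathbb{Z})$ change of coordinates, which induces an automorphism of $\Lambda(2)$ preserving the vertex count, I may assume the lattice direction witnessing the flatness of $P$ is $c = e_2$, so that the two consecutive vertices $a$ and $b$ from Definition~\ref{defn.flat} are, respectively, the lowest and the highest vertices of $P$ in the vertical direction. Since $P$ is not strongly flat, some vertex $v \neq a, b$ satisfies $v_2 = a_2$ or $v_2 = b_2$; by the convexity of $P$ such a $v$ must be adjacent to $a$ on the side opposite $b$ or to $b$ on the side opposite $a$. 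Up to a vertical reflection I assume the first case and denote this vertex by $a'$; the edge $aa'$ is then horizontal. Let $a''$ denote the vertex of $P$ adjacent to $a'$ other than $a$.

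The key move is: insert a lattice point $w$ outside $P$ very close to a non-horizontal edge of $P$ (the edge $ab$ being the natural candidate), then delete $a'$ from the resulting polygon with $n+1$ vertices. The polygon $P''$ thus obtained has $n$ vertices, and its new edge $aa''$ is non-horizontal since $a''_2 > a_2$. Crucially, the line through $aa''$ separates $a'$ from the other vertices of $P$, so every remaining vertex of $P$ lies weakly on one side of this line; by choosing $w$ close enough to the edge $ab$ on its exterior side, the inserted point also lies on this same side. Tilting the perpendicular of $aa''$ slightly to a nearby primitive lattice direction $c'$ should then make $a$ and $a''$ the unique extremes of $P''$ in direction $c'$, with every other vertex strictly between them, thus exhibiting the strong flatness of $P''$ with witnessing pair $(a,a'')$ (after possibly swapping their roles).

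The main obstacle is the existence of a suitable $w$: by Lemma~\ref{Lem.DPR.1.A}, $w$ must lie outside $P$ and outside every cone $C_v(P)$, and must furthermore be close enough to the line through $aa''$ so that the tilted direction $c'$ admits a lattice representative making $P''$ strongly flat. Although the insertable region adjacent to the relative interior of any edge of $P$ is a non-empty two-dimensional open set (the cones $C_v(P)$ being closed), it need not a priori contain a lattice point sufficiently near the line through $aa''$; cones such as $C_{a''}(P)$ or $C_b(P)$ can in fact obstruct many nearby lattice points. I would resolve this via a case analysis exploiting the hypothesis $n\geq 5$: with at least three non-extreme vertices, $P$ offers several candidate edges (the edge $ab$, the edges incident to $a''$, and the edges incident to $b$) along which $w$ can be attempted, and a careful geometric argument combining Lemma~\ref{Lem.DPR.1.A} with the structure of the normal fans of $P$ ensures that at least one of these choices yields a lattice point with the required property. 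The symmetric case where only the top horizontal line is flat is handled by reflection, and the case where both $a'$ and $b'$ exist is reduced by applying the construction successively, each application strictly decreasing the number of vertices lying on the extreme horizontal lines of $P$.
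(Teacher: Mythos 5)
Your construction breaks down at the step where you claim that $P''$ is strongly flat with witnessing pair $(a,a'')$. Definition \ref{defn.flat} requires every other vertex $v$ to satisfy $a\mathord{\cdot}c'\leq v\mathord{\cdot}c'\leq a''\mathord{\cdot}c'$, i.e., to be \emph{sandwiched} between $a$ and $a''$ in the direction $c'$. Your argument establishes only one of the two inequalities: knowing that all vertices of $P''$ lie weakly on one side of the line through $a$ and $a''$ gives $v\mathord{\cdot}c'\geq a\mathord{\cdot}c'$ for $c'$ the inward perpendicular, but says nothing about the upper bound. Once you tilt the perpendicular slightly, the slab $\{x:a\mathord{\cdot}c'\leq x\mathord{\cdot}c'\leq a''\mathord{\cdot}c'\}$ has arbitrarily small width, while the vertex $b$ (still present in $P''$, and not collinear with $a$ and $a''$) sits at a fixed positive distance from the line $aa''$; so $b$ violates the upper inequality for every $c'$ close enough to that perpendicular. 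Put differently, two adjacent vertices can be the two extremes of a polygon in some direction only if the polygon is already flat with respect to that edge, which is precisely what you are trying to produce. A second, related defect: inserting $w$ beyond the edge $ab$ destroys the adjacency of $a$ and $b$, and since $w$ lies at a height strictly between $a_2$ and $b_2$, neither $(a,w)$ nor $(w,b)$ can serve as a witnessing pair for the direction $e_2$ either, so $P''$ need not even be flat.

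The paper's proof avoids both problems by never letting the witnessing edge stop spanning the polygon in the direction $c$. It inserts a lattice point $x$ \emph{far away} on the line orthogonal to $c$ through $p^3$, on the far side of the long edge $E=p^1p^n$ --- flatness guarantees that every lattice point of that line beyond $\mathrm{aff}(E)$ is insertable, which also disposes of the lattice-point-existence issue you flag as your main obstacle --- then deletes the offending vertex $p^2$, then inserts $y=x-kc$ with $k$ large enough that $y\mathord{\cdot}c<p^1\mathord{\cdot}c$ and deletes $x$. The net effect of these four moves is to replace $p^2$ by a vertex lying strictly beyond $p^1$ in the direction $c$, so the polygon is again flat with the spanning pair $(y,p^n)$ and has no edge orthogonal to $c$ at that end; one more round at the $p^n$ end yields strong flatness. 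A single insert--delete pair per end, with the new point placed near $ab$, cannot achieve this: the replacement vertex must land strictly outside the original slab, not inside it.
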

\begin{proof}
Label the vertices of $P$ clockwise from $p^1$ to $p^n$ in such a way that, for some non-zero lattice vector $c$, the inequalities $p^1\mathord{\cdot}c\leq{p^i\mathord{\cdot}c}\leq{p^n\mathord{\cdot}c}$ hold whenever $1<i<n$.
\begin{figure}
\begin{center}
\includegraphics{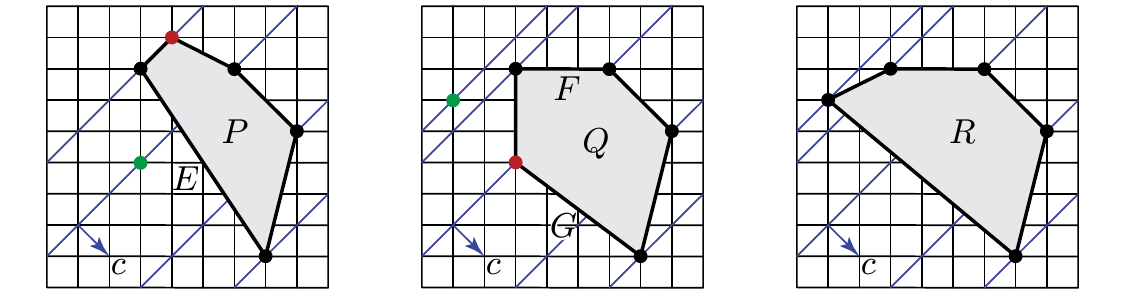}
\caption{Making a flat lattice polygon strongly flat.}
\label{Fig.DPR.4.3}
\end{center}
\end{figure}
By the convexity of $P$, all of these inequalities are strict, except possibly for $p^1\mathord{\cdot}c\leq{p^2\mathord{\cdot}c}$ and $p^{n-1}\mathord{\cdot}c\leq{p^n\mathord{\cdot}c}$. The former inequality turns into an equality precisely when the segment between $p^1$ and $p^2$ is orthogonal to $c$, and the latter when the segment between $p^{n-1}$ and $p^n$ is orthogonal to $c$. Observe that these segments are the only two possible edges of $P$ that can be orthogonal to $c$. Call $E$ the edge with vertices $p^1$ and $p^n$.

Assuming that $n\geq5$, the line orthogonal to $c$ through $p^3$ must intersect $E$ in its relative interior. Since $P$ is flat, all the lattice points in this line that are separated from $p^3$ by the affine hull of $E$ can be inserted in $P$. There is an infinite number of such lattice points because $c$ has integer coordinates. Let $x$ be one of these lattice points. Call $Q$ the polygon obtained by inserting $x$ in $P$ and then deleting $p^2$ from it. These two operations are sketched on the left of Fig. \ref{Fig.DPR.4.3}, where $x$ is colored green and $p^2$ is colored red.

Now call $y=x-kc$, where $k$ is a positive integer. We can choose $k$ large enough so that $y\mathord{\cdot}c<p^1\mathord{\cdot}c$. In the center of Fig. \ref{Fig.DPR.4.3}, the point $y$ is colored green and, as can be seen, $k$ is taken equal to $2$. Observe that the segment $F$ with vertices $p^1$ and $p^3$ is an edge of $Q$ whose affine hull is not orthogonal to $c$. As a consequence, there exists an infinite number of lattice points outside of $H_F^-(Q)$ that belong to the line orthogonal to $c$ through $y$. Now recall that $x$ can be chosen arbitrarily far away from $P$. Hence, one can pick $x$ in such a way that $y$ does not belong to $H_F^-(Q)$. Note that this amounts to translate the segment with vertices $x$ and $y$ away from $P$. Observe that $y$ may still belong to $H_G^-(Q)$, where $G$ is the edge of $Q$ with vertices $x$ and $p^n$. However, in this case, $y$ can still be placed outside of $H_G^-(Q)$ by further translating $x$ away from $P$. In this case, $y$ can be inserted in $Q$. Call $R$ the polygon obtained by inserting $y$ in $Q$ and then deleting $x$ from it. These two operations are sketched in the center of Fig. \ref{Fig.DPR.4.3}, where $y$ is colored green and $x$ is colored red.

Repeating the whole procedure using the edge of $R$ with extremities $p^{n-1}$ and $p^n$ allows to build a strongly flat lattice polygon from $P$. By construction, the path from $P$ to this polygon is contained in the subgraph induced in $\Lambda(2)$ by the polygons with $n$ or $n+1$ vertices.
\end{proof}

Strongly flat lattice polygons are, in turn, connected to oblique lattice polygons via sequences of moves that alternate between the insertion of a lattice point and the deletion of a vertex. More precisely, we have the following.

\begin{lem}\label{Lem.DPR.4.5}
For any integer $n$ such that $n\geq3$, the strongly flat lattice polygons with $n$ vertices all belong to the same connected component of the subgraph induced in $\Lambda(2)$ by the polygons with $n$ or $n+1$ vertices. 
\end{lem}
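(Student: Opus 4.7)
The plan is to reduce the strongly flat case to the oblique case of Lemma \ref{Lem.DPR.4.3}. Specifically, I would show that every strongly flat lattice polygon $P$ with $n$ vertices is joined, by a path in the subgraph induced by polygons with $n$ or $n+1$ vertices, to some oblique lattice polygon with $n$ vertices. Since Lemma \ref{Lem.DPR.4.3} already connects any two oblique polygons with $n$ vertices, all strongly flat polygons with $n$ vertices would then lie in the same connected component.

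Let $c$ be the direction vector of $P$ and $E = [a, b]$ its extremal edge, with the remaining vertices $p^2, \ldots, p^{n-1}$ forming a convex arc strictly on one side of $\mathrm{aff}(E)$. I would effect the reduction by a single insertion-deletion pair that replaces $b$ by a lattice point $b'$ chosen far on the arc side of $\mathrm{aff}(E)$, specifically so that the new vertex set $\{a, b', p^2, \ldots, p^{n-1}\}$ satisfies the oblique condition: after translating and reflecting if necessary so that the arc lies in $\{y > 0\}$ with $a$ at the origin, pick $b'$ with $b'_1 > \max_i p^i_1$ and $b'_2 > \max_i p^i_2$. Inserting $b'$ produces a polygon with $n+1$ vertices in which $b$ is now interior to the arc in combinatorial terms; deleting $b$ then leaves an $n$-vertex polygon whose extremal edge $[a, b']$ satisfies $a_1 < p^i_1 < b'_1$ and $a_2 < p^i_2 < b'_2$ for every remaining $p^i$, which is precisely obliqueness.

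Insertability of $b'$ is checked via Lemma \ref{Lem.DPR.1.A}: the point $b'$ must lie outside $P$ and outside every cone $C_v(P)$. By Lemma \ref{Lem.DPR.1.B} the cones $C_a(P)$ and $C_b(P)$ are contained in the half-plane $H^-_E(P)$ on the side of $\mathrm{aff}(E)$ opposite the arc, so a $b'$ chosen on the arc side escapes both of them. For an arc vertex $p^i$, the cone $C_{p^i}(P)$ is a proper wedge bounded by the supporting lines of the two edges of $P$ incident to $p^i$; by taking $b'$ high enough above $\mathrm{aff}(E)$ in a direction transverse to the edges adjacent to $b$, one escapes every such cone, since each $C_{p^i}(P)$ opens in a bounded angular sector of directions. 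The existence of admissible lattice $b'$ follows because the complement in $\mathbb{R}^2 \setminus P$ of the union of all cones is an unbounded open region that always contains lattice points with arbitrarily large coordinates.

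The main obstacle is to make the insertability argument uniform when the direction $c$ is an arbitrary primitive lattice vector and the arc geometry is asymmetric. In the axis-aligned case the admissible region for $b'$ is clearly non-empty, but for general $c$ one must verify that there is enough room simultaneously above $\mathrm{aff}(E)$ and outside every arc cone for a single lattice point $b'$ to realise the oblique conclusion. The resolution is to describe $b'$ not by absolute coordinates but relative to $\mathrm{aff}(E)$ and a transverse lattice direction, and to invoke the strong flatness of $P$ to control the angular extent of the arc cones; the technical care lies in checking that, after the deletion of $b$, the polygon remains $n$-vertex, strongly flat, and now genuinely oblique in the coordinate sense required by Lemma \ref{Lem.DPR.4.3}.
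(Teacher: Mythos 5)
Your high-level plan (connect every strongly flat polygon to an oblique one and invoke Lemma \ref{Lem.DPR.4.3}) is the right one, but the mechanism you propose --- a single insertion--deletion pair replacing the endpoint $b$ of the extremal edge by a far-away lattice point $b'$ --- has a genuine gap. For the resulting polygon to be oblique with distinguished pair $(a,b')$, Definition \ref{defn.wedge} requires \emph{every} remaining vertex $p^i$ to satisfy both $a_1<p^i_1<b'_1$ and $a_2<p^i_2<b'_2$ (up to lattice symmetries). Choosing $b'$ far away only takes care of the upper bounds; the lower bounds $a_1<p^i_1$ and $a_2<p^i_2$ are a property of $P$ alone, namely that all arc vertices lie in a single open coordinate quadrant with apex $a$. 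Strong flatness does not imply this, and your normalization (``arc lies in $\{y>0\}$ with $a$ at the origin'') only controls one of the two coordinates. Concretely, the lattice quadrilateral with vertices $(0,0)$, $(2,-1)$, $(7,1)$, $(10,3)$ is strongly flat for $c=(1,0)$ with extremal edge from $a=(0,0)$ to $b=(10,3)$, yet the arc vertices $(2,-1)$ and $(7,1)$ straddle the horizontal line through $a$; no choice of $b'$ (and no reflection of the lattice) makes $(a,b')$ an oblique pair. Your argument does not consider replacing $a$ instead, nor does it show that one of the two endpoints always works. A secondary, unresolved point is the insertability of a far-away $b'$: the set of directions along which one can escape to infinity while remaining insertable is exactly the set of directions lying beyond a single edge of $P$, and you would still need to check that such a direction is compatible with the quadrant you need; ``each cone opens in a bounded angular sector'' is not enough, since the cones together with $P$ cover all but finitely many narrow gaps.

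The paper avoids all of this by never trying to deform $P$ locally into an oblique polygon. Instead it manufactures a separate oblique polygon from scratch: it takes the point reflection $Q$ of $P$ through the midpoint of the extremal edge $E$, applies a lattice shear $\psi:x\mapsto x+k\phi(x)u$ along a lattice direction $u$ orthogonal to $c$ (with $k$ chosen so that the images are lattice points and the shear displacements dominate the original coordinate differences, which uses strong flatness to guarantee $\phi(q^{i+1})\neq\phi(q^i)$), and verifies that $P\cup\psi(Q)$ is in convex position. Lemma \ref{Lem.DPR.1.BC} then supplies a path from $P$ to the oblique polygon $\psi(Q)$ inside the subgraph of polygons with $n$ or $n+1$ vertices, exchanging \emph{all} the vertices one at a time rather than just one. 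If you want to salvage your one-vertex-replacement idea you would have to prove the missing quadrant condition (or iterate replacements at both endpoints), which is essentially the difficulty the paper's global construction is designed to bypass.
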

\begin{proof}
If follows from Definitions \ref{defn.wedge} and \ref{defn.flat} that an oblique polygon is necessarily strongly flat. The result can therefore be obtained from Lemma \ref{Lem.DPR.4.3} by showing that a strongly flat lattice polygon with $n$ vertices can always be transformed into an oblique lattice polygon with $n$ vertices by a sequence of moves such that each insertion move is followed by a deletion move.

Let $P$ be a strongly flat lattice polygon. Index the vertices of $P$ clockwise from $p^1$ to $p^n$ in such a way that, for some non-zero lattice vector $c$, the inequalities $p^1\mathord{\cdot}c<p^i\mathord{\cdot}c<p^n\mathord{\cdot}c$ hold whenever $1<i<n$. Since these inequalities are strict, they all remain true when $c$ is replaced by its sum with a unit lattice vector, provided $c$ is long enough. As $c$ can be taken arbitrarily long, we can therefore assume without loss of generality that both $c_1$ and $c_2$ are non-zero. Now consider a non-zero lattice vector $u$ orthogonal to $c$. Since both coordinates of $c$ are non-zero, then so are the two coordinates of $u$.

Call $E$ the segment with vertices $p^1$ and $p^n$. We require that $u$ points toward $H_E^-$ by, if needed, replacing $u$ by $-u$. Let $q^i$ denote the lattice point centrally-symmetric to $p^i$ with respect to the center of $E$. The points $q^1$ to $q^n$ are the vertices of the lattice polygon $Q$ that is centrally-symmetric to $P$ with respect to the center of $E$. We are now going to shear $Q$ into an oblique lattice polytope $\psi(Q)$, as illustrated in Fig. \ref{Fig.DPR.4.4}.
\begin{figure}[b]
\begin{center}
\includegraphics{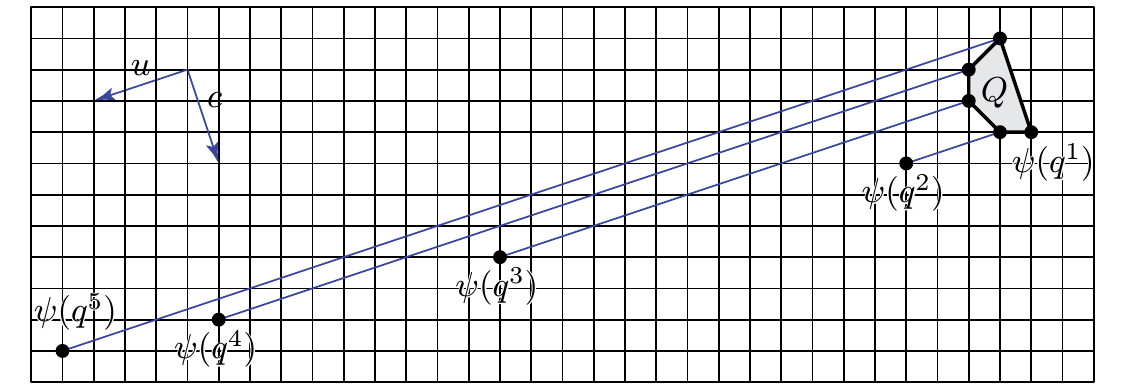}
\caption{Making a strongly flat lattice polygon oblique.}
\label{Fig.DPR.4.4}
\end{center}
\end{figure}
Since $u_1\neq0$, for any point $x\in\mathbb{R}^2$, the difference $x-p^1$ can be written uniquely as the sum of a vertical vector with a vector parallel to $u$. Denote by $\phi(x)$ the second coordinate of the vertical vector. Note that, if $x$ is a lattice point, then $\phi(x)$ is rational but it is not necessarily an integer. For instance, in the case shown in Fig. \ref{Fig.DPR.4.4}, $\phi(q^1)=0$, $\phi(q^2)=1/3$, $\phi(q^3)=5/3$, $\phi(q^4)=8/3$, and $\phi(q^5)=10/3$. Further note that, in general, $\phi(q^1)$ is equal to $0$ and the other $\phi(q^i)$ are either all positive or all negative. Now consider the following affine transformation of $\mathbb{R}^2$:
$$
\psi:x\mapsto{x+k\phi(x)u}\mbox{,}
$$
where $k$ is an integer such that, for all $i\in\{1, ..., n\}$, $k\phi(q^i)\in\mathbb{N}$. Observe that the integer $k$ necessarily exists because $\phi(q^i)$ is rational for all $i\in\{1, ..., n\}$. By our choice for $k$, $\psi(q^i)$ is a lattice point. In fact, there is an infinite number of such values for $k$ since it can be replaced by its product with any positive integer. We can therefore also require that, whenever $1\leq{i}<n$,
\begin{equation}\label{Lem.DPR.4.5.Eq.1}
\left\{
\begin{array}{l}
|k\phi(q^{i+1})u_1-k\phi(q^i)u_1|>|q^{i+1}_1-q^i_1|\mbox{,}\\
|k\phi(q^{i+1})u_2-k\phi(q^i)u_2|>|q^{i+1}_2-q^i_2|\mbox{.}\\
\end{array}
\right.
\end{equation}

Note that (\ref{Lem.DPR.4.5.Eq.1}) can be required because $u_1$ and $u_2$ are non-zero. Further note that  $\phi(q^{i+1})$ is always distinct from $\phi(q^i)$ because $P$ is strongly flat. In the illustration shown in Fig. \ref{Fig.DPR.4.4}, $k$ is taken equal to $3$ and the segments with vertices $q^i$ and $\psi(q^i)$ are colored blue. Note that $q^1$ coincides with $\psi(q^1)$ because $\phi(q^1)=0$. Since $\psi$ is affine, $\psi(Q)$ is a lattice polygon whose vertices are exactly the images by $\psi$ of the vertices of $Q$. Denote $r^i=\psi(q^i)$. By (\ref{Lem.DPR.4.5.Eq.1}), the first coordinates and the second coordinates of $\psi(q^1)$ to $\psi(q^n)$ form two strictly monotone sequences. In other words, $\psi(Q)$ is oblique. By construction, the vertices of the convex hull of $P\cup\psi(Q)$ are exactly the vertices of $P$ and the vertices of $\psi(Q)$. The result then follows from Lemma \ref{Lem.DPR.1.BC}.
\end{proof}

Lattice pentagons have the following desirable property.

\begin{thm}\label{Thm.DPR.4.1}
Lattice pentagons are either flat or can be made flat by first inserting a single lattice point and then deleting a single vertex. 
\end{thm}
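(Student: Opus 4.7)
The plan is to reduce the problem to the existence of a single lattice point in a well-understood region. If $P$ is flat there is nothing to prove, so I assume $P$ is a non-flat lattice pentagon with vertices $p_1,\ldots,p_5$ labelled cyclically, and aim to exhibit a vertex $p_j$ and a lattice point $x$ insertable in $P$ such that the pentagon obtained by inserting $x$ in $P$ and then deleting $p_j$ is flat.

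The starting observation is that every convex lattice quadrilateral is already flat. Writing $\alpha_i$ for the exterior angle at vertex $p_i$ of a convex polygon, flatness at an edge $p_ip_{i+1}$ is equivalent to $\alpha_i+\alpha_{i+1}\ge\pi$; when equality holds, the witnessing direction is forced to be the outer normal of an edge of the polygon, which is a rational direction and therefore contains a lattice vector. Since $\sum_i\alpha_i=2\pi$ in any convex polygon, summing $\alpha_i+\alpha_{i+1}$ over the four edges of a quadrilateral yields $4\pi$, forcing at least one of the four contributions to be at least $\pi$. I would therefore pick a vertex $p_j$ of $P$, form the quadrilateral $Q=\mathrm{conv}(\{p_1,\ldots,p_5\}\setminus\{p_j\})$, and fix a flat edge $ab$ of $Q$ with a witnessing lattice direction $c$ for which $a\cdot c$ and $b\cdot c$ are the minimum and maximum values of $v\cdot c$ over the vertices of $Q$.

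The key reduction is then that it suffices to find a lattice point $x$ insertable in $P$ (in the sense of Lemma \ref{Lem.DPR.1.A}) such that $x$ lies on the same side of $\mathrm{aff}(ab)$ as the interior of $Q$ and $a\cdot c\le x\cdot c\le b\cdot c$. Indeed, the pentagon produced by first inserting such an $x$ in $P$ and then deleting $p_j$ has vertex set $(\{p_1,\ldots,p_5\}\setminus\{p_j\})\cup\{x\}$ and coincides with $\mathrm{conv}(Q\cup\{x\})$; the first side condition guarantees that $ab$ survives as an edge of this pentagon, and the second that $a$ and $b$ remain the extremes in direction $c$, so the resulting pentagon is flat along $ab$ in direction $c$.

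The hard part will be establishing the existence of such an $x$. The admissible region is the intersection of the insertability region of $P$ (the complement of $P$ and of the cones $C_v(P)$) with a half-strip determined by $\mathrm{aff}(ab)$ and $c$, and this intersection could a priori contain no lattice point. I expect the proof to proceed by a case analysis exploiting the freedom in the choice of $p_j$ (five candidates), in the choice of the flat edge $ab$ of $Q$ when several are available, and in the choice of $c$ within the admissible cone of lattice directions. The critical geometric input will be the non-flatness of $P$ itself: this forces every combined extent of consecutive normal cones of $P$ to be strictly less than $\pi$, which constrains how far the cones $C_v(P)$ can reach into the half-strip near $\mathrm{aff}(ab)$ and should, in every configuration, leave a lattice point available just outside a well-chosen edge of $P$.
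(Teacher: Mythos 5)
Your reduction is sound as far as it goes: the observation that every convex lattice quadrilateral is flat (via the exterior-angle count, together with the rationality of the forced direction in the equality case) is correct, and so is the claim that it then suffices to find a lattice point $x$ insertable in $P$, lying on the interior side of a flat edge $ab$ of $Q=\mathrm{conv}(\mathcal{V}\mathord{\setminus}\{p_j\})$ and in the slab $a\mathord{\cdot}c\le x\mathord{\cdot}c\le b\mathord{\cdot}c$. But the argument stops exactly where the theorem begins. Everything you establish is routine; the entire content of the statement is the existence of such an $x$, and you explicitly defer it (``I expect the proof to proceed by a case analysis\dots'') without carrying out any case. The admissible region you describe --- the intersection of the insertability region of $P$ with a half-strip --- is a union of pieces of five small triangles whose interiors can easily miss the lattice for a general non-lattice pentagon, so nothing in your angle bookkeeping forces a lattice point to exist there; a genuinely new idea is required and is not supplied.

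The idea the paper uses, and which is absent from your proposal, is to manufacture candidate insertable points as integer affine combinations of the vertices themselves, e.g. $p^3+p^5-p^2$ (the fourth vertex of the parallelogram on $p^2$, $p^3$, $p^5$) or $p^1+p^3-p^4$, which are automatically lattice points; the case split is on whether the diagonal $p^2p^5$ is parallel to the opposite edge, and the non-flatness of $P$ is invoked each time to certify that one of these candidates lies in the interior of one of the five triangles $T^i$ forming the insertable region, and that the pentagon obtained after deleting the appropriate vertex is flat along the newly created diagonal edge. Note also a mismatch with your framework: in the paper's construction the flat edge of the resulting pentagon is precisely the diagonal created by the deletion and the witnessing direction is the normal of a specific edge of $P$, whereas an arbitrarily chosen flat edge of your quadrilateral $Q$ need not admit any insertable lattice point in its slab. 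So even exercising the freedoms you list (choice of $p_j$, of $ab$, of $c$) would amount to rediscovering the paper's explicit construction, which is the part of the proof you have not given.
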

\begin{proof}
Consider a lattice pentagon $P$. Index the vertices of $P$ clockwise by $p^1$ to $p^5$. assume that $P$ is not flat. In this case, for any edge $E$ of $P$, the affine hulls of the two edges of $P$ adjacent to $E$ cannot be parallel and their intersection point $x$ is separated from the interior of $P$ by the affine hull of $E$. In other words, $x$ and the two extremities of $E$ are the vertices of a triangle contained in $H_E^-(P)$. This triangle will be denoted by $T^i$, where  $p^i$ is the vertex of $P$ opposite $E$. This situation is sketched on the left of Fig. \ref{Fig.DPR.4.5}, where the affine hulls of the edges of $P$ are shown as thin lines. The union of the cones $C_{p^i}(P)$, where $i$ ranges from $1$ to $5$, is the portion of $\mathbb{R}^2$ colored red in the figure. As can be seen, the union of the interiors of triangles $T^1$ to $T^5$ is precisely the set of the points of $\mathbb{R}^2$ that can be inserted in $P$. We will show that there is a lattice point in the interior of at least one of these five triangles.

Denote the edge of $P$ incident to $T^i$ by $E^i$. Also, call $F$ the line segment with vertices $p^2$ and $p^5$. We will review two cases. First assume that $F$ is not parallel to $E^1$. In this case, the points $p^3+p^5-p^2$ and $p^4+p^2-p^5$ cannot both belong to $H^-_{E^1}(P)$. By symmetry, we can assume that $p^3+p^5-p^2$ does not belong to $H^-_{E^1}(P)$. Call $x=p^3+p^5-p^2$. The resulting situation is depicted on the right of Fig. \ref{Fig.DPR.4.5}, at the top.
\begin{figure}[b]
\begin{center}
\includegraphics{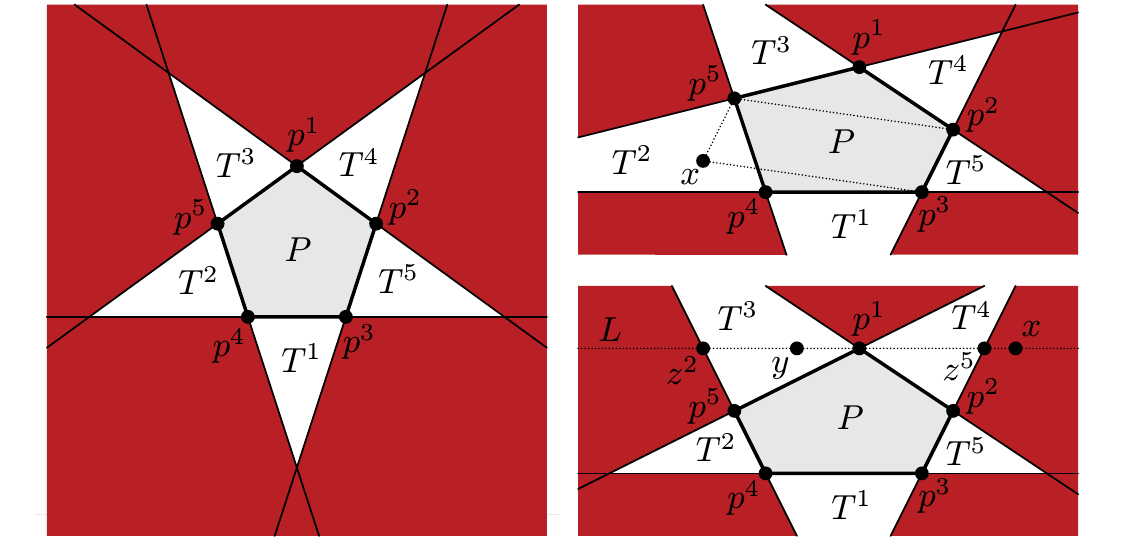}
\caption{The constructions in the proof of Theorem \ref{Thm.DPR.4.1}.}
\label{Fig.DPR.4.5}
\end{center}
\end{figure}
As shown in the figure, $x$ must then belong to the interior of $T^2$. Indeed, the quadrilateral with vertices $x$, $p^2$, $p^3$, $p^5$ is a parallelogram because two of its edges are translates of one another. Hence, $x$ belongs to the line through $p^5$ parallel to $E^5$. Since $P$ is not flat, $x$ cannot belong to $H^-_{E^3}(P)$, otherwise the lines parallel to $E^5$ through the vertices of $P$ would all intersect $E^4$. Moreover, $x$ cannot belong to $P$ either, otherwise the lines parallel to $E^5$ through the vertices of $P$ would all intersect $E^1$. In other words, $x$ must belong to the interior of $T^2$ as shown in the figure, and it can be inserted in $P$. In addition, $x$ is a lattice point as a linear combination of lattice points with integer coefficients. Call $Q$ the lattice pentagon obtained by first inserting $x$ in $P$ and by deleting $p^2$ from the resulting hexagon. As $P$ is not flat, the lines through its vertices parallel to $E^1$ (or to $E^3$) all intersect the segment that connects $p^1$ and $p^3$. Note that this segment is an edge of $Q$. Since $x$ belongs to $T^2$ the line through $x$ parallel to $E^1$ (or to $E^3$) also intersects this segment. As a consequence, $Q$ is a flat lattice pentagon.

Now assume that $F$ is parallel to $E^1$. This situation is depicted on the right of Fig. \ref{Fig.DPR.4.5}, at the bottom. Consider the point $x=p^1+p^3-p^4$. This point cannot belong to $H^-_{E^3}(P)$. Indeed, otherwise, the lines parallel to $E^1$ through the vertices of $P$ would all intersect $E^2$ and $P$ would be flat. The point $x$ cannot be contained in $P$ either because, then, the lines parallel to $E^1$ through the vertices of $P$ would all intersect $E^5$. Hence, if $x$ does not belong to $H^-_{E^5}(P)$, then it is contained the interior of $T^4$ and can be inserted in $P$. Again, $x$ is a lattice point as a linear combination of lattice points with integer coefficients. Therefore, inserting $x$ in $P$ and then deleting $p^4$ from the resulting hexagon transforms $P$ into a flat lattice pentagon, as desired. Now assume that $x$ belongs to $H^-_{E^5}(P)$. Observe that $x$ is contained in the line $L$ parallel to $E^1$ through $p^1$, sketched in Fig. \ref{Fig.DPR.4.5} as a dotted line. This line intersects the affine hulls of $E^2$ and $E^5$ in two points, which we denote by $z^2$ and $z^5$, respectively. Since $P$ is flat, the lines parallel to $E^5$ through the vertices of $P$ cannot all intersect $E^1$. As a consequence, the distance between $z^2$ and $z^5$ is greater than the distance between $p^2$ and $p^5$. In particular, the lattice point $x+p^5-p^2$, which we denote by $y$, is necessarily strictly between $x$ and $z^2$ in $L$. Since $P$ is flat, the distance between $p^3$ and $p^4$ is less than the distance between $p^2$ and $p^5$. Hence, $y$ lies strictly between $p^1$ and $z^2$ in $L$. This portion of $L$ is a subset of the interior of $T^3$ because $P$ is flat, and $y$ can therefore be inserted in $P$. As earlier, performing this insertion and, then deleting $p^3$ from the obtained hexagon transforms $P$ into a flat lattice pentagon, which completes the proof.
\end{proof}

Theorem \ref{thm.main.4}, that settles the second exception in the statement of Theorem~\ref{thm.main.3} can now be proven. Note that Theorems~\ref{thm.main.3}, \ref{thm.main.4}, and \ref{thm.Connect.lattice} collectively close the $2$-dimensional case of the question whether the subgraph induced in $\Lambda(d)$ by the polygons with $n$ or $n+1$ vertices is connected or not.

\begin{proof}[Proof of Theorem \ref{thm.main.4}]
By Theorem \ref{Thm.DPR.4.1}, there is a path from any lattice pentagon to a flat lattice pentagon in the subgraph induced in $\Lambda(2)$ by pentagons and hexagons. According to Lemma \ref{Lem.DPR.4.4}, any flat lattice pentagon can, in turn, be transformed into a strongly flat lattice pentagon within the same subgraph of $\Lambda(2)$. The desired result therefore follows from Lemma \ref{Lem.DPR.4.5}.
\end{proof}

\section{Discussion and open problems}\label{Sec.DPR.6}

We have introduced a graph structure on the $d$-dimensional polytopes contained in $\mathbb{R}^d$. We have proven, among other things, that this graph is connected, as well as its subgraph induced by lattice polytopes. The distances in this graph provide a measure of dissimilarity on polytopes in terms of how long it is to transform two of them into one another by a sequence of elementary moves. This allows to gather in a coherent metric structure very different objects from both the geometric and the combinatorial point of view.

This structure, and the results we obtained open up several new questions. For instance, recall that the subgraph induced in $\Gamma(d)$ by the polytopes with $n$ or $n+1$ vertices is always connected. 
We propose to investigate the subgraphs of $\Gamma(d)$ such that moves are allowed when a quantity other than the number of vertices is almost constant. In particular we ask the following.

\begin{qtn}\label{Qtn.DPR.1}
Consider a non-trivial interval $I\subset]0,+\infty[$. Is the subgraph induced in $\Gamma(d)$ by the polytopes whose volume belongs to $I$ connected?
\end{qtn}

Note that other measures than the volume of the polytope can be considered as well in Question \ref{Qtn.DPR.1}, and possibly several of them simultaneously (for instance, the volume and the number of vertices).

The main results in this article deal with lattice polytopes. These polytopes are often constrained to be contained in a hypercube \cite{AcketaZunic1995,DelPiaMichini2016,DezaManoussakisOnn2018,DezaPournin2018,KleinschmidtOnn1992,Naddef1989}. In this case, they form a nice (even if elusive) combinatorial class. The connectedness of $\Lambda(d,k)$ makes it possible to define a Markov chain on this combinatorial class whose stationary distribution is uniform \cite{DavidPourninRakotonarivo2018}. Some authors have considered lattice polytopes contained in a ball \cite{BaranyLarman1998} or in some arbitrary lattice polytope \cite{SoprunovSoprunova2016}. Pursuing this idea, we ask the following.

\begin{qtn}
For what balls $B$ is the subgraph induced in $\Lambda(d)$ by the polytopes contained in $B$ connected? For what lattice polytopes $P$ is the subgraph induced in $\Lambda(d)$ by the polytopes contained in $P$ connected?
\end{qtn}

Another graph that we have obtained results on is the subgraph induced in $\Lambda(d)$ by the polytopes with $n$ or $n+1$ vertices. The connectedness of this graph is particularly intriguing. For instance, when $d=2$, it follows from Theorem~\ref{thm.main.4}, Theorem~\ref{thm.Connect.lattice}, and Lemma \ref{Lem.DPR.4.1} that this graph is connected if and only if $n$ is equal to $3$ or $5$. When $d$ is greater than $2$, this graph is also sometimes connected and sometimes disconnected: Theorem~\ref{thm.Connect.lattice} tells that it is connected when $n=d+1$ for all $d\geq2$ and Theorem \ref{Thm.DPR.4.4} that it is diconnected for arbitrarily large values of $d$ and $n$. In particular, while we have completely settled the question in the $2$-dimensional case, the higher dimensional case is still open in general.

\begin{qtn}
Is the subgraph induced in $\Lambda(d)$ by the polytopes with $n$ or $n+1$ vertices sometimes connected when $d\geq3$ and $n\geq{d+2}$?
\end{qtn}

We ask two more questions on the structure of our graphs that are not directly related to the results we obtained in this article.

\begin{qtn}
Do the graphs $\Lambda(d,k)$, where either the value of $d$ or that of $k$ is fixed, form a family of expanders?
\end{qtn}

\begin{qtn}
What are the chromatic numbers of $\Gamma(d)$, $\Lambda(d)$, and $\Lambda(d,k)$?
\end{qtn}

\noindent{\bf Acknowledgements.} Lionel Pournin is partially supported by the the ANR project SoS (Structures on Surfaces), grant number ANR-17-CE40-0033.

\bibliography{MovesOnLatticePolytopes}

\providecommand{\MR}{\relax\ifhmode\unskip\space\fi MR }
\providecommand{\MRhref}[2]{%
  \href{http://www.ams.org/mathscinet-getitem?mr=#1}{#2}
}
\providecommand{\href}[2]{#2}
\begin{thebibliography}{10}

\bibitem{AcketaZunic1995}
Dragan Acketa and Jovi\v{s}a \v{Z}uni\'{c}, \textsl{On the maximal number of
  edges of convex digital polygons included into an $m\times{m}$-grid}, Journal
  of Combinatorial Theory A \textbf{69} (1995), 358--368.

\bibitem{BaranyKantor2000}
Imre B{\'a}r{\'a}ny and Jean-Michel Kantor, \textsl{On the number of lattice
  free polytopes}, European Journal of Combinatorics \textbf{21} (2000), no.~1,
  103--110.

\bibitem{BaranyLarman1998}
Imre B{\'a}r{\'a}ny and David~G. Larman, \textsl{The convex hull of the integer
  points in a large ball}, Mathematische Annalen \textbf{312} (1998), no.~1,
  167--181.

\bibitem{BaranyPach1992}
Imre B{\'a}r{\'a}ny and J{\'a}nos Pach, \textsl{On the number of convex lattice
  polygons}, Combinatorics, Probability and Computing \textbf{1} (1992), no.~4,
  295--302.

\bibitem{BarileBernardiBorisovKantor2011}
Margherita Barile, Dominique Bernardi, Alexander Borisov, and Jean-Michel
  Kantor, \textsl{On empty lattice simplices in dimension $4$}, Proceedings of
  the American Mathematical Society \textbf{139} (2011), no.~12, 4247--4253.

\bibitem{BlancoSantos2018}
M{\'o}nica Blanco and Francisco Santos, \textsl{Enumeration of lattice
  $3$-polytopes by their number of lattice points}, Discrete \& Computational
  Geometry \textbf{60} (2018), 756--800.

\bibitem{BlancoSantos2019}
M{\'o}nica Blanco and Francisco Santos, \textsl{Non-spanning lattice
  $3$-polytopes}, Journal of Combinatorial Theory A \textbf{161} (2019),
  112--133.

\bibitem{BogartHaaseHeringLorenzNillPaffenholzRoteSantosSchenck2015}
Tristram Bogart, Christian Haase, Milena Hering, Benjamin Lorenz, Benjamin
  Nill, Andreas Paffenholz, G{\"u}nter Rote, Francisco Santos, and Hal Schenck,
  \textsl{Finitely many smooth $d$-polytopes with $n$ lattice points}, Israel
  Journal of Mathematics \textbf{207} (2015), no.~1, 301--329.

\bibitem{BrionVergne1997}
Michel Brion and Mich{\`e}le Vergne, \textsl{Lattice points in simple
  polytopes}, Journal of the American Mathematical Society \textbf{10} (1997),
  no.~2, 371--392.

\bibitem{BrunsGubeladzeMichalek2016}
Winfried Bruns, Joseph Gubeladze and Mateusz Micha\l{}ek, \textsl{Quantum jumps
  of normal polytopes}, Discrete \& Computational Geometry \textbf{56} (2016),
  181--215.

\bibitem{DavidPourninRakotonarivo2018}
Julien David, Lionel Pournin and Rado Rakotonarivo, \textsl{A {Markov} chain
  for lattice polytopes}, Proceedings of GAScom 2018, CEUR Workshop
  Proceedings, vol. 2113, 2018, pp.~132--139.

\bibitem{DelPiaMichini2016}
Alberto {Del Pia} and Carla {Michini}, \textsl{On the diameter of lattice
  polytopes}, Discrete \& Computational Geometry \textbf{55} (2016), 681--687.

\bibitem{DezaManoussakisOnn2018}
Antoine Deza, George Manoussakis and Shmuel Onn, \textsl{Primitive zonotopes},
  Discrete \& Computational Geometry \textbf{60} (2018), 27--39.

\bibitem{DezaPournin2018}
Antoine Deza and Lionel Pournin, \textsl{Improved bounds on the diameter of
  lattice polytopes}, Acta Mathematica Hungarica \textbf{154} (2018), 457--469.

\bibitem{DezaOnn1995}
Michel Deza and Shmuel Onn, \textsl{Lattice-free polytopes and their diameter},
  Discrete \& Computational Geometry \textbf{13} (1995), 59--75.

\bibitem{DickensteinDiRoccoPiene2009}
Alicia Dickenstein, Sandra~Di Rocco and Ragni Piene, \textsl{Classifying smooth
  lattice polytopes via toric fibrations}, Advances in Mathematics \textbf{222}
  (2009), 240--254.

\bibitem{Gromov1981}
Mikhael Gromov, \textsl{Groups of polynomial growth and expanding maps},
  Publications Math{\'e}matiques de l'I.H.{\'E}.S. \textbf{53} (1981), 53--78.

\bibitem{HaaseZiegler2000}
Christian Haase and G{\"u}nter~M. Ziegler, \textsl{On the maximal width of
  empty lattice simplices}, European Journal of Combinatorics \textbf{21}
  (2000), no.~1, 111--119.

\bibitem{Kantor1999}
Jean-Michel Kantor, \textsl{On the width of lattice-free simplices}, Compositio
  Mathematica \textbf{118} (1999), no.~3, 235--241.

\bibitem{KleinschmidtOnn1992}
Peter Kleinschmidt and Shmuel Onn, \textsl{On the diameter of convex
  polytopes}, Discrete Mathematics \textbf{102} (1992), 75--77.

\bibitem{LagariasZiegler1991}
Jeffrey~C. Lagarias and G{\"u}nter~M. Ziegler, \textsl{Bounds for lattice
  polytopes containing a fixed number of interior points in a sublattice},
  Canadian Journal of Mathematics \textbf{43} (1991), 1022--1035.

\bibitem{Naddef1989}
Dennis Naddef, \textsl{The {H}irsch conjecture is true for $(0,1)$-polytopes},
  Mathematical Programming \textbf{45} (1989), 109--110.

\bibitem{NillZiegler2011}
Benjamin Nill and G{\"u}nter~M. Ziegler, \textsl{Projecting lattice polytopes
  without interior lattice points}, Mathematics of Operations Research
  \textbf{36} (2011), 462--467.

\bibitem{SantosValino2018}
Francisco Santos and {\'O}scar~Iglesias Vali{\~n}o, \textsl{Classification of
  empty lattice 4-simplices of width larger than two}, Transactions of the
  American Mathematical Society, to appear.

\bibitem{Sebo1999}
Andr{\'a}s Seb\H{o}, \textsl{An introduction to empty simplices}, Proceedings
  of IPCO 7, Lecture Notes in Computer Science, vol. 1610, 1999, pp.~400--414.

\bibitem{SoprunovSoprunova2016}
Ivan Soprunov and Jenya Soprunova, \textsl{Eventual quasi-linearity of the
  {M}inkowski length}, European Journal of Combinatorics \textbf{58} (2016),
  107--117.

\end{thebibliography}
\bibliographystyle{ijmart}

\end{document}